\newtheorem{theorem}{Theorem}[section]
\newtheorem{proposition}[theorem]{Proposition}
\newtheorem{lemma}[theorem]{Lemma}
\newtheorem{corollary}[theorem]{Corollary}
\theoremstyle{definition}
\newtheorem{definition}{Definition}[section]
\numberwithin{equation}{section}
\theoremstyle{remark}
\newtheorem{remark}[theorem]{Remark}
\def\leq{\leqslant}
\newcommand\twoheaddownarrow{\mathrel{\rotatebox[origin=c]{90}{$\twoheadleftarrow$}}}
\begin{document}
	
	\title{On hollowness in multiplicative lattices}
	
	\author{Amartya Goswami}
	\address{A. Goswami: [1]  Department of Mathematics and Applied Mathematics, University of Johannesburg, P.O. Box 524, Auckland Park 2006, South Africa. [2]  National Institute for Theoretical and Computational Sciences (NITheCS), South Africa.}
	\email{agoswami@uj.ac.za}
	
	\author{Joseph Zelezniak}
	
	\address{J. Zelezniak: Department of Mathematics and Applied Mathematics, University of Cape Town, Rondebosch 7700, South Africa}
	\email{ZLZJOS001@myuct.ac.za}
	
	\subjclass{06F05, 06F10, 13A15}
	
	
	
	\keywords{Strongly hollow element, principal element, $C$-lattice, quasi-local lattice, Gelfand lattice, Noether lattice, Pr\"ufer lattice, $r$-lattice}
	
	\begin{abstract}
		The aim of this article is to extend the notions of strongly hollow and completely strongly hollow ideals of commutative rings to multiplicative lattices.  We investigate their basic structural properties and prove several characterizations in terms of localizations at maximal elements and the behaviour of residuals. In particular, we study properties of strongly hollow elements in various types of
		$C$-lattices: Gelfand, semi-simple, and Pr\"ufer. We also provide characterizations of quasi-local weak $r$-lattices by completely strongly hollow elements. Furthermore, we give characterization of strongly hollow elements in Noether lattices, and obtain explicit descriptions in Pr\"ufer and 
		$r$-lattices. Using strongly hollow and completely strongly hollow elements, we obtain representabilities of multiplicative lattices.
	\end{abstract}
	
	\maketitle 
	
	\section{Introduction}

	A \emph{multiplicative lattice}  is a complete lattice $(Q, \leqslant, 0, 1)$ endowed with an associative,
	commutative multiplication (denoted by $\cdot$), which distributes over arbitrary joins and has $1$ as multiplicative
	identity.
	In this paper, by $Q$, we will denote a multiplicative lattice. Let us recall a few  definitions from \cite{And-ML-1974,Dil62, And-ACIT-1976}.  An element $c$ of $Q$ is called \emph{compact} if for any $\{x_{\lambda}\}_{\lambda \in \Lambda}\subseteq Q$ and $c\leqslant \bigvee_{\lambda \in \Lambda} x_{\lambda}$ implies that $c\leqslant \bigvee_{i=1}^n x_{\lambda_i}$, for some $n\in \mathds{N}^+$, and  $Q$ is said to be \emph{compactly generated} if every element of
	$Q$ is the join of compact elements of $Q$. An element  $x$ of $Q$ is said to be \emph{finitely generated} if it is the join of a finite number of compact elements. A subset $S \subseteq Q$ is called \emph{multiplicatively closed}  if $1\in S$ and
	$xy \in S$ for each $x$, $y \in S$.  A multiplicative lattice $Q$ is called a \emph{$C$-lattice} if the set of compact elements $Q^{*}$ is multiplicatively closed, contains $1$, and every element of $Q$ is a join of compact elements. In any $C$-lattice, there exist minimal primes above an element (see \cite[p.~2]{Abs-CIT}). A multiplicative lattice $Q$ which is a C-lattice and also principally generated is called a \emph{weak $r$-lattice}.
	
	An element $a\in Q$ is called \emph{uniform} if the meet of any two elements less than it is non-zero. An element $a$ in a lattice $Q$ is called \emph{neutral} if $(a\wedge x)\vee (x\wedge y)\vee (y\wedge a) = (a\vee x)\wedge (x\vee y)\wedge (y\vee a)$, for all $x$, $y\in Q$. An element $p \in Q$ with $p < 1$ is \emph{prime} if $xy \leq p$ implies $x \leq p$ or $y \leq p$; the set of prime elements of $Q$ is denoted $\operatorname{Spec}(Q)$.  A prime is called a \emph{minimal prime} if there is no prime strictly less than it; the set of minimal primes is denoted by $\mathcal{P}_o$. A multiplicative lattice $Q$ is an \emph{integral domain} (or simply, a \emph{domain}) if $0$ is a prime element. An element $n \in Q$ is called \emph{nilpotent} if $n^{m} = 0$ for some positive integer $m$; the \emph{nilradical} of $Q$, denoted $\sqrt{0}$, is the join of all nilpotent elements of $Q$, equivalently the meet of all prime elements of $Q$. A multiplicative lattice $Q$ is called \emph{reduced} if there exist no nilpotent elements.  For $a$, $b \in Q$, the \emph{residual of $a$ by} $b$ is defined as $(a : b) = \bigvee \{ x \in L \mid x b \leq a \}$. An element $m\in Q$ is called \emph{maximal} if there exists no element strictly greater than it and strictly less than 1. We denote the set of all maximal elements in $Q$ by $\mathcal{M}$. By $J(Q):=\bigwedge \{ m\mid m\in \mathcal{M}\}$, we denote the Jacobson radical of $Q$. A multiplicative lattice $Q$ is called \emph{semi-simple} if  $J(Q)$ is zero. We note that all maximal elements are prime elements, and if $1$ is a compact element, then every element is less than a maximal element (see \cite[Lemma~2.3(1)]{Z-Lat}). 
    
    The \emph{localization} of $Q$ at a multiplicatively closed subset $S \subseteq Q^{*}$ containing $1$ is $Q_{S}=\{a_S\mid a\in Q\}$ where $a_{S}:=\bigvee\{x\in Q\mid  x\cdot s\leq a,\text{ for some } s\in S\}$. Note that $Q_S$ is a multiplicative lattice with the same order as $Q$ and the multiplication defined by $a_{S} \cdot b_{S}:= (ab)_{S}$, where the multiplication on the right is being evaluated in $Q$. If $q\in Q$ is a prime element then we set $S:=\{s\in Q^*\mid s\nleq q\}$, and refer to the localization $Q_S$ as $Q_q$. Observe that in a $C$-lattice we have that $a=b$ if and only if $a_m=b_m$, for all maximal elements $m\in Q$. We refer the reader to \cite{And-ML-1974} and \cite{Dil62} for more information on localization. A multiplicative lattice $Q$ is said to be \emph{quasi-local} if $1$ is compact and $Q$ has a unique maximal element.
	
	The notions of principal and weakly principal elements we use here are those from \cite{AJ-PE-1996} which were adapted from \cite{Dil62}. An element $e\in Q$ is said to be \emph{principal} if it satisfies  
	\begin{enumerate}
		\item $a\wedge be = ((a:e)\wedge b)e$, 
		
		\item $a\vee(b:e) =((ae\vee b):e)$,
	\end{enumerate}
	for all $a$, $b\in Q$.
	An element $e\in Q$ is called \emph{meet-principal} if it satisfies (1), and \emph{join-principal} if it satisfies (2).
	An element $a\in Q$ is \emph{weak principal}  if it satisfies 
	\begin{enumerate}
		\item[(a)] $e\wedge a = (a:e)e$,
		
		\item[(b)] $a\vee(0:e)=(ae:e)$,
	\end{enumerate}
	for all  $a\in Q$,
	and is called respectively \emph{weak join principal} and \emph{weak meet principal} if it satisfies (a) and (b) respectively.
	We will make heavy use of the characterizations of weak join principal and weak meet principal elements given by \cite[Lemma~1]{AJ-PE} as well as the \cite[Theorem~1]{AJ-PE}.
	
	We consider the standard Zariski topology on $\operatorname{Spec}(Q)$, where $V(a)=\{p\in\operatorname{Spec}(Q)\mid a\leq p\}$ (where $a\in Q$)
	is a base for the closed sets. We consider $\mathcal{M}$ as a subspace of $\operatorname{Spec}(Q)$. We note that since our multiplicative lattices have a top element $1$ which is a multiplicative identity, whenever $1$ is compact (for example, if $Q$ is a $C$-lattice), both $\operatorname{Spec}(Q)$ and $\mathcal{M}$ are compact topological spaces. We refer the reader to \cite{Facchini-AbsSpec-2021} for further studies of the Zariski topology on multiplicative lattices.  
	
	As a generalization of strongly hollow ideals, in \cite{Ros21}, Rostami has introduced the notion of completely strongly hollow ideals and studied various related properties. This paper aims to extend both these notions to multiplicative lattices. 
	
	Let us now summarize the main results in this paper. After introducing the notions of strongly hollow and completely strongly hollow elements in Section~\ref{bp}, we begin by examining their relationship with maximal elements. In particular, we show that any strongly hollow element avoids at most one maximal element (Theorem~\ref{T2.8}), and that the existence of a strongly hollow cancellation element forces locality (Proposition~\ref{P2.9}). 
	We provide a characterization of completely strongly hollow elements in terms of their associated completely irreducible elements (Theorems~\ref{T2.7}, \ref{T2.25}, and \ref{T2.26}). 
	
	In Section~\ref{hsl}, we characterize quasi-local weak $r$-lattices in terms of completely strongly hollow elements. In Propositions~\ref{1}–\ref{3}, Lemma~\ref{2}, and Theorem~\ref{4}, we establish that quasi-locality is equivalent to the property that every principal completely strongly hollow element is comparable with all other elements. We then turn to the existence and behaviour of strongly hollow elements in specific classes of lattices. In Proposition~\ref{P3.1}, Theorem~\ref{T3.4}, and Corollary~\ref{C3.6}, we show that in semi-simple Gelfand $C$-lattices the notions of strongly hollow, minimal, and uniform elements coincide. The section concludes with necessary and sufficient conditions for the existence of a completely strongly hollow element in a reduced Pr\"ufer $C$-lattice (Theorem~\ref{T3.8}).
	
	In Section~\ref{dml}, we turn to decompositions of elements and lattices using completely strongly hollow elements. Lemmas~\ref{L4.5} and \ref{L4.6} establish the finiteness of maximal and minimal elements in lattices that admit such representations.
	Finally, Section~\ref{cwn} studies completely strongly hollow elements in $r$-lattices. We show that if every element is a product of completely strongly hollow elements, then the lattice is a weak Noether chain lattice (Proposition~\ref{P5.3}). Furthermore, Theorem~\ref{T5.5} and Proposition~\ref{P5.6} establish equivalences between various classes of $r$-lattices and those in which every prime element is completely strongly hollow.
	
	\section{Basic Properties}
	\label{bp}
	
	\begin{definition}
		An element $a \in Q$ is called \emph{strongly hollow} if for $j$, $k\in Q$ and $a \leq j \vee k$ implies that $ a \leq j$ or $a \leq k$. An element $a \in Q$ is called \emph{completely strongly hollow} if for an arbitrary indexed family $\{i_\lambda\}_{\lambda \in \Lambda}\subseteq Q$ and $a \leq \bigvee_{\lambda \in \Lambda} i_\lambda$ implies that $a \leq i_{\lambda'}$, for some
		$\lambda' \in \Lambda$.
	\end{definition}
	
	\begin{remark}
		It is worth mentioning that strongly hollow elements are studied in other works, such as \cite{Abs-CIT}, where they are called `join-prime' elements of a lattice.
	\end{remark}
	
	Recall that in the category $\mathds{ML}\text{at}$  of multiplicative lattices, an adjunction $(f,u):X\rightarrow Y$ is a pair of order preserving maps:
	\[
	\begin{tikzcd}[column sep=4em] 
		X
		\arrow[r, shift left=.8ex, "f"] 
		& Y,
		\arrow[l, shift left=.8ex, swap, "u" below]
	\end{tikzcd}
	\]
	with $f(x)\leq y \Leftrightarrow x\leq u(y)$. 
	
	\begin{proposition}
		Let $(f,u):X\rightarrow Y$ be a morphism in $\mathds{ML}\text{at}$ and $u$ preserves binary joins (e.g. is a left adjoint itself ). If $x$ is a strongly hollow element in $X$, then $f(x)$ is strongly hollow in $Y$.
	\end{proposition}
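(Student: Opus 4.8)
The plan is to transport the defining inequality across the adjunction, use the join-preservation of $u$ to split it, apply strong hollowness in $X$, and then transport the conclusion back to $Y$. Since strong hollowness is a purely order-theoretic property, none of the multiplicative structure enters; the whole argument is a formal consequence of the adjunction $f(x')\leq y \iff x'\leq u(y)$ together with the hypothesis on $u$.

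First I would take arbitrary $j$, $k\in Y$ with $f(x)\leq j\vee k$, the goal being to conclude $f(x)\leq j$ or $f(x)\leq k$. Applying the adjunction with $x'=x$ and $y=j\vee k$ turns this into $x\leq u(j\vee k)$ in $X$. Because $u$ preserves binary joins by hypothesis, this reads $x\leq u(j)\vee u(k)$.

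Now strong hollowness of $x$ in $X$ applies directly to the pair $u(j)$, $u(k)\in X$, giving $x\leq u(j)$ or $x\leq u(k)$. Applying the adjunction in the reverse direction in each case — $x\leq u(j)\iff f(x)\leq j$, and likewise for $k$ — yields $f(x)\leq j$ or $f(x)\leq k$, which is exactly the strong hollowness of $f(x)$.

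There is no genuine obstacle here, but the one point worth flagging is why the join-preservation hypothesis cannot be dropped. As a right adjoint, $u$ automatically preserves meets but need not preserve joins, and it is precisely the step $u(j\vee k)=u(j)\vee u(k)$ that allows a join formed in $Y$ to be tested against the strongly hollow element $x$, which lives in $X$. The parenthetical remark that $u$ may itself be a left adjoint simply records one sufficient condition guaranteeing this preservation.
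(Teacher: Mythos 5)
Your proof is correct and is essentially identical to the paper's own argument: both transport $f(x)\leq j\vee k$ across the adjunction, use the hypothesis to split $u(j\vee k)=u(j)\vee u(k)$, apply strong hollowness of $x$, and transport back. The extra remark on why right adjoints alone would not suffice is accurate but not needed for the proof itself.
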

	
	\begin{proof}
		Suppose that $x$ is strongly hollow in $X$ and $f(x)\leq r\vee s$. Then we have that $x\leq u(r\vee s)=u(r)\vee u(s)$. Therefore, $x\leq u(r)$ or $x\leq u(s)$ giving $f(x)\leq r$ or $f(x)\leq s$. 
	\end{proof}
	
	\begin{remark}
		If $u$ is completely join-preserving, that is, it is a left adjoint, then the same argument above shows that $f$ preserves completely strongly hollow elements.	
	\end{remark}
	In the case of finitely generated elements, the two types of hollowness coincide.
	
	\begin{proposition}
		A finitely generated element $a \in Q$ is completely strongly hollow if and only if it is strongly hollow.
	\end{proposition}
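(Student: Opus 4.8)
The plan is to prove the two implications separately, with only the reverse direction requiring any work. For the forward implication, if $a$ is completely strongly hollow, then applying the defining condition to the two-element family $\{j,k\}$ immediately gives that $a\leq j\vee k$ forces $a\leq j$ or $a\leq k$; thus $a$ is strongly hollow, and I would dispose of this in a single line.

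For the converse, suppose $a$ is finitely generated and strongly hollow, and that $a\leq\bigvee_{\lambda\in\Lambda}i_\lambda$. The first key step is to record that a finitely generated element is compact: writing $a=c_1\vee\cdots\vee c_n$ with each $c_i$ compact, each $c_i$ lies below a finite subjoin of the $i_\lambda$, and the union of these finitely many finite index sets is again finite, so $a$ itself lies below the corresponding finite subjoin. Hence there is a finite subfamily with $a\leq i_{\lambda_1}\vee\cdots\vee i_{\lambda_n}$.

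The second step is to promote the binary strong-hollowness condition to arbitrary finite joins by induction on $n$. The case $n\leq 2$ is the definition; for the inductive step I would write $i_{\lambda_1}\vee\cdots\vee i_{\lambda_n}=(i_{\lambda_1}\vee\cdots\vee i_{\lambda_{n-1}})\vee i_{\lambda_n}$, apply strong hollowness to conclude $a\leq i_{\lambda_1}\vee\cdots\vee i_{\lambda_{n-1}}$ or $a\leq i_{\lambda_n}$, and invoke the induction hypothesis in the first case. This yields $a\leq i_{\lambda_j}$ for some $j$, which is precisely the complete strong hollowness condition applied to the original family.

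I do not anticipate a genuine obstacle here; the only point deserving care is making sure the passage to a finite subfamily genuinely invokes the finite-generation (compactness) hypothesis, since it is exactly this hypothesis that fails for general strongly hollow elements and that permits an infinite join to be replaced by a finite one. Everything beyond that reduction is a routine finite induction on the binary condition.
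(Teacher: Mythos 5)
Your proof is correct and takes essentially the same route as the paper: finite generation gives compactness, compactness reduces the arbitrary join to a finite subjoin, and strong hollowness then selects a single term. The only difference is that you spell out two steps the paper leaves implicit --- the argument that a finitely generated element is compact (which the paper cites from the literature) and the induction promoting the binary condition to finite joins.
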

	
	\begin{proof}
		If $a$ is completely strongly hollow, then by definition, it is strongly hollow when restricted to finite joins.
		For the converse, suppose $a$ is a finitely generated element of $Q$ and $a$ is strongly hollow. Then $a$ is compact (see \cite[Proposition~2.1]{Cal}). Assume that $a \leq \bigvee_{\lambda \in \Omega} i_\lambda$, then there exists a finite subset $S \subset \Omega$ such that $a \leq \bigvee_{\lambda \in S} i_\lambda$. Since $a$ is strongly hollow, there exists $\lambda \in S$ such that $m \leq i_\lambda$. Thus, $a$ is completely strongly hollow.
	\end{proof}
	
	A particularly important characteristic of completely strongly hollow elements is that if a multiplicative lattice $Q$ is generated by a set of elements \(S\subset Q\), then any completely hollow element \(a\in Q\) is a member of the generating set, that is, \(a\in S\). This means that in any multiplicative lattice which is generated by principal (weak principal), compact, or prime elements, any completely strongly hollow element has the property in question. Moreover, if the generating set is finite, the same statement holds for strongly hollow elements. Consequently, in any multiplicative lattice generated by principal, compact, or prime elements, any subsequent result that assumes a completely strongly hollow element is a member of the generating set holds for all completely strongly hollow elements.

	\begin{proposition}
		If an element $0\neq a \in Q$ is completely strongly hollow, then the set $\{ j \mid j < a \}$ has exactly one maximal element.
	\end{proposition}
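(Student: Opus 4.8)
The plan is to exhibit the unique maximal element explicitly, as the join of the \emph{entire} set of elements strictly below $a$. Write $D := \{ j \in Q \mid j < a \}$ and set $b := \bigvee D$. First I would check that $D$ is nonempty: since $a \neq 0$ we have $0 < a$, so $0 \in D$. As every member of $D$ lies below $a$, it is immediate that $b \leq a$.

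The crucial step is to rule out $b = a$, and this is precisely where complete strong hollowness is used. If we had $b = a$, then $a \leq \bigvee_{j \in D} j$, and applying the completely strongly hollow property to the family $\{ j \}_{j \in D}$ would give $a \leq j'$ for some $j' \in D$. But $j' \in D$ means $j' < a$, so we would obtain $a \leq j' < a$, a contradiction. Hence $b < a$, that is, $b \in D$.

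Finally, since $b = \bigvee D$ and $b \in D$, the element $b$ is the greatest element of $D$. A greatest element of a poset is automatically its unique maximal element, which establishes the claim.

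The only delicate point I anticipate is the legitimacy of applying the completely strongly hollow property to the possibly infinite set $D$; this is harmless, because $D$ is a subset of $Q$ and may be indexed by itself, while the definition of completely strongly hollow is stated for arbitrary indexed families. Note that ordinary strong hollowness would not suffice here, since $D$ need not be finite, so the complete hypothesis is genuinely required.
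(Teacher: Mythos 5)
Your proof is correct and follows essentially the same route as the paper's: both take the join $b = \bigvee\{j \mid j < a\}$, use complete strong hollowness to rule out $b = a$, and conclude that $b$ is the greatest (hence unique maximal) element of the set. Your write-up is in fact slightly more explicit than the paper's in noting that $b < a$ puts $b$ itself in the set, which is what makes it a greatest element rather than merely an upper bound.
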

	
	\begin{proof}
		Let $A := \{ j \mid j < a \}$. Since $0 < a$, we have $A \neq \emptyset$. Now $\bigvee A \leq a$, and suppose for contradiction that $\bigvee A = a$. Then $a \leq \bigvee A$, which implies $a \leq j$, for some $j \in A$, a contradiction. Thus, $\bigvee A$ is the unique maximal element.
	\end{proof}
	
	\begin{proposition}\label{T2.3}
		If $i$, $j$ are strongly hollow elements in $Q$, then $i \vee j$ is strongly hollow if and only if either $i \leq j$ or $j \leq i$.
	\end{proposition}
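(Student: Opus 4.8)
The plan is to treat the two implications separately; the content is light, since both directions follow almost immediately from the definition of strongly hollow. For the backward implication, I would assume without loss of generality that $i \leq j$. Then $i \vee j = j$, and because $j$ is strongly hollow by hypothesis, $i \vee j$ is strongly hollow; the case $j \leq i$ is handled symmetrically. This is the only place where the strong hollowness of the individual elements $i$ and $j$ is actually used.

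For the forward implication, I would set $h := i \vee j$ and assume $h$ is strongly hollow. The key move is to feed $h$ into its own definition. We trivially have $h \leq i \vee j$, so applying strong hollowness of $h$ with the two test elements being $i$ and $j$ yields $h \leq i$ or $h \leq j$. In the first case, combining $h \leq i$ with the always-valid inequality $i \leq h$ gives $h = i$, hence $j \leq h = i$, so $j \leq i$; in the second case the symmetric argument gives $i \leq j$. Either way the elements are comparable, which is exactly what is required.

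One notational point I would flag before writing the details: the definition of \emph{strongly hollow} uses the dummy letters $j,k$, which clash with the element $j$ appearing in the statement, so I would rename the test elements (say to $r,s$) to avoid confusion. Beyond that, there is no genuine obstacle to overcome. The whole argument hinges on the observation that a strongly hollow element, when tested against the trivial decomposition of itself, must already lie below one of the two pieces; everything else is forced by the antisymmetry of $\leq$. I therefore expect the proof to be a few lines rather than requiring any structural lemma.
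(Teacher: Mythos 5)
Your proof is correct and follows essentially the same route as the paper: the forward direction applies strong hollowness of $i \vee j$ to the trivial inequality $i \vee j \leq i \vee j$, and the backward direction notes that $i \vee j$ collapses to whichever of $i$, $j$ is larger. Your write-up just spells out the "trivial" backward case and the antisymmetry step a bit more explicitly than the paper does.
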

	
	\begin{proof}
		If either $i \leq j$ or $j \leq i$, the result is trivial.
		Suppose $i \vee j$ is strongly hollow, and apply the definition to $i \vee j \leq i \vee j$, which gives $i \vee j \leq i$ or $i \vee j \leq j$. Then $i \vee j = i$ or $i \vee j = j$, so $i \leq j$ or $j \leq i$.
	\end{proof}
	
	\begin{proposition}
		Let $a (\neq 0)$ be a strongly hollow element of $Q$. Then the following hold:
		\begin{enumerate}
			\item If $a = r \vee s$, then either $r \leq s$ or $s \leq r$.
			\item For $r \in Q$, either $a \leq r$ or $a \leq s^+$, where $s^+ := \bigwedge \{ s \mid s \vee r \geqslant a \}$.
		\end{enumerate}
	\end{proposition}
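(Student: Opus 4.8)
The plan is to derive both statements directly from the defining property of strongly hollow elements: if $a \leq j \vee k$ then $a \leq j$ or $a \leq k$. Neither part requires any auxiliary machinery, so the whole argument is a careful bookkeeping of this one implication together with the completeness of $Q$.

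For part (1), I would observe that the hypothesis $a = r \vee s$ gives in particular $a \leq r \vee s$, so strong hollowness immediately yields $a \leq r$ or $a \leq s$. In the first case one has $r \leq r \vee s = a \leq r$, forcing $r = a$ and hence $s \leq r \vee s = a = r$; the second case is entirely symmetric. Thus either $s \leq r$ or $r \leq s$, which is exactly the claimed comparability.

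For part (2), I would set $T := \{ s \in Q \mid s \vee r \geqslant a \}$, so that by definition $s^+ = \bigwedge T$. First note $1 \in T$, since $1 \vee r = 1 \geqslant a$, so $T \neq \emptyset$ and $s^+$ is a genuine meet. I would then argue by cases: if $a \leq r$ there is nothing to prove, so assume $a \not\leq r$. The target $a \leq s^+ = \bigwedge T$ is equivalent, by the universal property of the meet in the complete lattice $Q$, to the assertion that $a \leq s$ for every $s \in T$. So fix $s \in T$; then $a \leq s \vee r$, and strong hollowness gives $a \leq s$ or $a \leq r$. Since we are in the case $a \not\leq r$, we are forced to $a \leq s$. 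As $s \in T$ was arbitrary, this delivers $a \leq \bigwedge T = s^+$, completing the case analysis.

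I do not expect any genuine obstacle here, as both parts are immediate consequences of the definition. The only point that deserves a moment's care is in part (2): the step translating ``$a$ lies below the meet $s^+$'' into ``$a$ lies below each member of the defining family'' is precisely where completeness of $Q$ is invoked, and one should record that the family is nonempty (via $1 \in T$) so that the meet is well behaved, even though the argument would formally survive the empty case since $\bigwedge \emptyset = 1 \geqslant a$.
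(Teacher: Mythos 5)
Your proposal is correct and follows essentially the same route as the paper's own proof: part (1) applies strong hollowness to $a \leq r \vee s$ and then absorbs the join, and part (2) splits on whether $a \leq r$ and otherwise deduces $a \leq s$ for every member of the defining family, hence $a \leq s^+$. The only difference is cosmetic care (noting nonemptiness of the family via $1$), which the paper omits without loss.
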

	
	\begin{proof}
		(1) Suppose $a$ is  strongly hollow and $a = r \vee s$. Then $a \leq r$ or $a \leq s$. Thus, if $a \leq r$, then $a = r \vee s \leq r$ implies that $s \leq r$, and the other implication follows in the same way when supposing $a \leq s$. 
		
		(2) For ease of notation take $S := \{s \mid s \vee r \geq a\}$. We have that $a \leq r \vee s$, for all $s \in S$. If $a \leq r$, then we are done; otherwise $a \leq s$ for all $s \in S$, and thus $a \leq \bigwedge S = s^+$.
	\end{proof}
	
	\begin{proposition}\label{T2.8}
		Let $0 \neq a \in Q$. If $a$ is strongly hollow, then either $a \leq J(Q)$ or there exists a unique maximal element of $Q$ which is not greater than $a$.
	\end{proposition}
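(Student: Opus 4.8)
The plan is to split into two cases according to whether $a$ lies below every maximal element of $Q$. Recalling that $J(Q) = \bigwedge\{m \mid m \in \mathcal{M}\}$, the first case is immediate: if $a \leq m$ for every $m \in \mathcal{M}$, then $a \leq \bigwedge\{m \mid m \in \mathcal{M}\} = J(Q)$, and we land in the first alternative.

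So suppose instead that there is at least one maximal element $m_0$ with $a \nleq m_0$; the task is then to show that such an element is unique. The key auxiliary fact I would first record is that any two distinct maximal elements $m_1 \neq m_2$ satisfy $m_1 \vee m_2 = 1$. This comes straight from the definition of maximality: if $m_1 \neq m_2$, then neither can lie below the other, for $m_1 \leq m_2 < 1$ would place $m_2$ strictly between $m_1$ and $1$, contradicting the maximality of $m_1$. Hence $m_2 \nleq m_1$, so $m_1 < m_1 \vee m_2 \leq 1$, and maximality of $m_1$ forces $m_1 \vee m_2 = 1$.

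With this in hand, suppose toward a contradiction that $m_1$ and $m_2$ were two \emph{distinct} maximal elements with $a \nleq m_1$ and $a \nleq m_2$. Then $a \leq 1 = m_1 \vee m_2$, and the strongly hollow hypothesis yields $a \leq m_1$ or $a \leq m_2$, contradicting the choice of $m_1$ and $m_2$. Therefore at most one maximal element fails to dominate $a$; together with the existence of $m_0$ this gives exactly one, establishing the second alternative.

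I expect the whole argument to be short and essentially order-theoretic; no multiplicative structure, compactness of $1$, or $C$-lattice hypothesis enters, and in particular I do not need to invoke the existence of maximal elements separately, since the absence of an $m_0$ already places us in the first case. The only step requiring any care is the auxiliary fact that distinct maximal elements join to the top, and even there the subtlety lies purely in unwinding the definition of \emph{maximal} as admitting no element strictly between it and $1$. Once that is in place, the contradiction via strong hollowness is immediate.
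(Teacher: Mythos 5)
Your proof is correct and follows essentially the same route as the paper's: contraposing $a \nleq J(Q)$ to obtain a maximal element not above $a$, then using strong hollowness applied to $a \leq 1 = m_1 \vee m_2$ to rule out a second such element. The only difference is that you spell out the auxiliary fact that distinct maximal elements join to $1$, which the paper uses without comment.
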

	
	\begin{proof}
		Suppose $a \nleq J(Q)$, then there exists a maximal element $\mathfrak{m} \ngeq a$. Now if  $\mathfrak{n}$ is another maximal element, then $a \leq \mathfrak{m} \vee \mathfrak{n} = 1$. Hence $a \leq \mathfrak{n}$.
	\end{proof}
	
	\begin{proposition}\label{P2.9}
		If $Q$ has a strongly hollow element which is also a cancellation element, then $Q$ has at most one maximal element.
	\end{proposition}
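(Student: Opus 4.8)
The plan is to argue by contradiction: I would assume that $Q$ possesses two distinct maximal elements and then feed this into the multiplicative structure to violate the cancellation law. Let $a$ denote the given strongly hollow cancellation element, and suppose $\mathfrak{m}\neq\mathfrak{n}$ are two maximal elements of $Q$.

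First I would record that distinct maximal elements are comaximal. Since $\mathfrak{n}\not\leq\mathfrak{m}$ (otherwise $\mathfrak{m}$ would lie strictly between $\mathfrak{n}$ and $1$, contradicting the maximality of $\mathfrak{n}$), we have $\mathfrak{m}<\mathfrak{m}\vee\mathfrak{n}\leq 1$, and the maximality of $\mathfrak{m}$ then forces $\mathfrak{m}\vee\mathfrak{n}=1$.

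The crux is the multiplicative step. Using that $1$ is the multiplicative identity and that multiplication distributes over joins, I would write $a=a\cdot 1=a(\mathfrak{m}\vee\mathfrak{n})=a\mathfrak{m}\vee a\mathfrak{n}$. Because $a$ is strongly hollow, this yields $a\leq a\mathfrak{m}$ or $a\leq a\mathfrak{n}$; say $a\leq a\mathfrak{m}$. Since the reverse inequality $a\mathfrak{m}\leq a\cdot 1=a$ always holds, we obtain $a\mathfrak{m}=a=a\cdot 1$. Now the cancellation property of $a$ gives $\mathfrak{m}=1$, contradicting $\mathfrak{m}<1$; the case $a\leq a\mathfrak{n}$ is entirely symmetric. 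Hence $Q$ cannot carry two distinct maximal elements, which is the assertion.

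I expect the main (and only minor) subtlety to be invoking the cancellation law in the correct direction, namely that $a\mathfrak{m}=a\cdot 1$ entails $\mathfrak{m}=1$, together with the comaximality observation in the second step; the remainder is a direct computation. I note in passing that this argument needs neither the hypothesis $a\neq 0$ nor the preceding Proposition~\ref{T2.8}, since the contradiction $\mathfrak{m}=1$ is produced uniformly.
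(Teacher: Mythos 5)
Your proof is correct and follows essentially the same route as the paper's: decompose $a = a\mathfrak{m}\vee a\mathfrak{n}$ via distributivity, apply the strongly hollow property to get $a \leq a\mathfrak{m}$ (say), combine with $a\mathfrak{m}\leq a$ to conclude $a\mathfrak{m}=a$, and invoke cancellation to force $\mathfrak{m}=1$. The only cosmetic difference is that you spell out the comaximality $\mathfrak{m}\vee\mathfrak{n}=1$, which the paper takes as known.
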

	
	\begin{proof}
		Let $a \in Q$ be strongly hollow as well as a cancellation element. Suppose there exist two maximal elements $\mathfrak{m}$ and $\mathfrak{n}$. Then $a \leq \mathfrak{m} \vee \mathfrak{n} = 1$, and thus, \[a \cdot \mathfrak{m} \vee a \cdot \mathfrak{n} = a \cdot (\mathfrak{m} \vee \mathfrak{n}) = a.\] 
		Without loss of generality, suppose by the strongly hollow property of $a$ that $a \leq a \cdot \mathfrak{m}$. Then, since $a \cdot \mathfrak{m} \leq a \wedge \mathfrak{m} \leq a$, we have that $a = a \cdot \mathfrak{m}$. Thus, since $a$ is a cancellation element, $\mathfrak{m} = 1$, a contradiction. Hence, $Q$ has at most one maximal element.
	\end{proof}
	
	The next two results are on the behaviour of hollow elements in quasi-local lattices.
	
	\begin{proposition}\label{P2.7}
		Let (Q,m) be a quasi-local lattice. Then m is a strongly hollow element if it is weak principal.
	\end{proposition}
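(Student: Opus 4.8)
The plan is to verify the defining inequality of strong hollowness directly, exploiting the residuation identities that weak principality supplies. Suppose $m \leq j \vee k$; I want to conclude $m \leq j$ or $m \leq k$. If $j = 1$ or $k = 1$ the conclusion is immediate, so I would assume $j < 1$ and $k < 1$. Since $Q$ is quasi-local, $1$ is compact and $m$ is the unique maximal element, so by \cite[Lemma~2.3(1)]{Z-Lat} every proper element lies below $m$; hence $j \leq m$ and $k \leq m$, which together with $m \leq j \vee k$ forces $m = j \vee k$. This reduction is the entry point: it turns the hypothesis into an equality that the two halves of weak principality can dismantle.

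Next I would feed this equality through both defining conditions of weak principality. Applying condition (a) with $a = j$ and using $j \leq m$ gives $j = m \wedge j = (j:m)\,m$, and likewise $k = (k:m)\,m$; therefore
\[
m = j \vee k = \big((j:m)\vee(k:m)\big)\,m.
\]
Writing $w := (j:m)\vee(k:m)$, so that $wm = m$, I would then apply condition (b) to $a = w$. Since $wm = m$ and $(m:m)=1$ (because $1\cdot m \leq m$), this produces
\[
(j:m)\vee(k:m)\vee(0:m) = w \vee (0:m) = (wm:m) = (m:m) = 1.
\]
Applying the meet-principal identity to this particular $w$ is the crux of the argument: it converts the equality $wm=m$ into a join of three explicit elements that sums to $1$.

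Finally I would use quasi-locality once more. As $1$ is the only element not below the unique maximal $m$, if all three joinands above were proper they would each lie below $m$, forcing their join below $m < 1$, a contradiction; hence $(j:m)=1$, $(k:m)=1$, or $(0:m)=1$. In the first case $j = (j:m)\,m = m$, so $m \leq j$; in the second, $m \leq k$; and in the third $m = 1\cdot m = (0:m)\,m \leq 0$, so $m = 0 \leq j$ trivially. Either way $m \leq j$ or $m \leq k$, proving $m$ strongly hollow. I expect no serious obstacle: the proof is essentially bookkeeping once $w$ is identified, the identity $(x:m)\,m \leq x$ being automatic from distributivity of multiplication over joins, and the degenerate branch $(0:m)=1$ merely collapsing $m$ to $0$ rather than obstructing the conclusion.
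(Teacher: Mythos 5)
Your proof is correct and follows essentially the same route as the paper's: reduce $m \leq j \vee k$ to the equality $m = j \vee k$ via quasi-locality, write $j = (j:m)m$ and $k = (k:m)m$ by the weak meet principal identity, then apply the weak join principal identity to $w = (j:m)\vee(k:m)$ and use quasi-locality to force one of $(j:m)$, $(k:m)$, $(0:m)$ to equal $1$. Your treatment of the degenerate branch $(0:m)=1$ (concluding $m = 0 \leq j$ directly) is in fact slightly cleaner than the paper's appeal to the lattice becoming trivial.
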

	
	\begin{proof}
		Suppose that $m$ is weak principal and $m\leq r \vee s$. If either $r$ or $s$ equals $1$, then we are done. Thus, assume that $m=r\vee s$. Then, since $s,r\leq m$  and since $m$ is weak meet principal, there exist $c,d \in Q$ with $r=cm$ and $s=dm$. Hence, \[m=cm\vee dm = m(c\vee d),\] and now, since $m$ is weak join principal, we have that $1=(c\vee d)\vee (0:m)$. If $(0:m)=1$, then we have that $m=0$, which implies $Q$ is a trivial finite lattice and $m$ is completely strongly hollow. If $c\vee d =1$, then either $c=1$ or $d=1$ giving $m=r$ or $m=s$, leads to the desired conclusion. 
	\end{proof}
	
	\begin{proposition}\label{P2.11}
		Let $1\in Q$ be compact. Then $1$ is completely strongly hollow if and only if $Q$ is a quasi-local lattice.
	\end{proposition}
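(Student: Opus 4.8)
The plan is to split into the two implications and, in both, to lean on the cited fact (\cite[Lemma~2.3(1)]{Z-Lat}) that compactness of $1$ guarantees that every element strictly below $1$ lies beneath a maximal element. Throughout I would use that for the top element the condition ``$1$ completely strongly hollow'' says: whenever $1\leq\bigvee_{\lambda}i_\lambda$ (equivalently $\bigvee_\lambda i_\lambda=1$), some term satisfies $1\leq i_{\lambda'}$, i.e.\ $i_{\lambda'}=1$.

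For the forward direction, suppose $1$ is completely strongly hollow. Since $1$ is already assumed compact, it suffices to exhibit a unique maximal element. Existence is immediate by applying the cited lemma to $0<1$. For uniqueness I would argue by contradiction: if $\mathfrak{m}\neq\mathfrak{n}$ are two maximal elements, then $\mathfrak{n}\nleq\mathfrak{m}$, so $\mathfrak{m}<\mathfrak{m}\vee\mathfrak{n}\leq 1$, and maximality of $\mathfrak{m}$ forces $\mathfrak{m}\vee\mathfrak{n}=1$. As a completely strongly hollow element is in particular strongly hollow, $1\leq\mathfrak{m}\vee\mathfrak{n}$ yields $1\leq\mathfrak{m}$ or $1\leq\mathfrak{n}$, i.e.\ $\mathfrak{m}=1$ or $\mathfrak{n}=1$, contradicting maximality. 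Hence the maximal element is unique and $Q$ is quasi-local.

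For the converse, suppose $Q$ is quasi-local with unique maximal element $\mathfrak{m}$. The key observation is that $\mathfrak{m}$ is the largest proper element: by the cited lemma every $x<1$ lies below some maximal element, and the only one available is $\mathfrak{m}$, so $x\leq\mathfrak{m}$. Now let $\{i_\lambda\}_{\lambda\in\Lambda}\subseteq Q$ with $1\leq\bigvee_{\lambda}i_\lambda$. If no $i_\lambda$ equals $1$, then every $i_\lambda<1$, whence $i_\lambda\leq\mathfrak{m}$ for all $\lambda$, and therefore $\bigvee_\lambda i_\lambda\leq\mathfrak{m}<1$, a contradiction. Thus $i_{\lambda'}=1$ for some $\lambda'$, i.e.\ $1\leq i_{\lambda'}$, so $1$ is completely strongly hollow.

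I expect no serious obstacle here; the one point requiring care is the correct invocation of compactness, since it is precisely what licenses the step ``every proper element lies below a maximal element'' used in both directions (to produce a maximal element in the forward direction and to identify $\mathfrak{m}$ as the top proper element in the converse). It is worth noting that no finiteness reduction of the join is needed in the converse, as the whole join is bounded by $\mathfrak{m}$ at once; compactness enters only through the cited lemma.
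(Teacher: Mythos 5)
Your proof is correct and takes essentially the same route as the paper: the uniqueness argument (two distinct maximal elements $\mathfrak{m},\mathfrak{n}$ give $1\leq\mathfrak{m}\vee\mathfrak{n}$, forcing $\mathfrak{m}=1$ or $\mathfrak{n}=1$) is identical, and the converse rests on the same key fact that compactness of $1$ makes the unique maximal element an upper bound for every proper element. The only cosmetic difference is in the converse: you bound the arbitrary join by $\mathfrak{m}$ directly, whereas the paper observes that $1$ is strongly hollow and then upgrades to completely strongly hollow via compactness; both are sound.
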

	
	\begin{proof}
		The `if' direction is trivial since $1$ is compact and strongly hollow (since every element is below the unique maximal element). Notice that if $1$ is strongly hollow and $m,n\in Q$ are two distinct maximal elements, then $1\leq m\vee n$, implying that $1=m \text{ or } 1=n$, which is a contradiction. Therefore, $Q$ is quasi-local.
	\end{proof}

	We next aim to show that every completely strongly hollow element is associated with a completely strongly irreducible element. We also show that the property of being strongly hollow is preserved by passage to a quotient lattice and under certain localizations. 
	Let us first introduce a few new notations.

	\begin{definition}
		For an element $a \in Q$ we set $T(a) := \{k \mid a \nleq k \}$ and $\kappa(a) := \bigvee T(a)$, and we denote $L_a=(\kappa(a):a)$.
	\end{definition}
	
	\begin{theorem}\label{T2.7}
		Let $a$ be a non-zero compact element of $Q$. Then $a$ is strongly hollow (and thus, completely strongly hollow) if and only if there exists the greatest element of $Q$ with respect to not being greater than $a$. In the case where $a$ is completely strongly hollow, this element is $\kappa(a)$.
	\end{theorem}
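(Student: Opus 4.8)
The plan is to recognise that ``the greatest element of $Q$ with respect to not being greater than $a$'' is precisely a greatest element of the set $T(a)=\{k\mid a\nleq k\}$, and to show that, when $a$ is compact and strongly hollow, this greatest element is exactly $\kappa(a)=\bigvee T(a)$. Throughout I would use that a compact element is finitely generated, so that by the earlier proposition (finitely generated $+$ strongly hollow $\Rightarrow$ completely strongly hollow) the hypotheses let me invoke the \emph{completely} strongly hollow property, i.e.\ the conclusion for arbitrary joins, not merely finite ones.

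For the forward implication, assume $a$ is strongly hollow, hence completely strongly hollow. Since $\kappa(a)=\bigvee T(a)$ is by construction an upper bound for $T(a)$, it suffices to prove $\kappa(a)\in T(a)$, that is $a\nleq\kappa(a)$; this would exhibit $\kappa(a)$ as the greatest element of $T(a)$. I would argue by contradiction: if $a\leq\kappa(a)=\bigvee_{k\in T(a)}k$, then complete strong hollowness yields some $k\in T(a)$ with $a\leq k$, contradicting the defining condition $a\nleq k$ of membership in $T(a)$. Hence $a\nleq\kappa(a)$, and $\kappa(a)$ is the required greatest element.

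For the converse, suppose some $k_{0}$ is greatest among the elements not greater than $a$, so $a\nleq k_{0}$ while every $k$ with $a\nleq k$ satisfies $k\leq k_{0}$. To see that $a$ is strongly hollow, take $a\leq j\vee k$ and suppose $a\nleq j$ and $a\nleq k$; then $j,k\leq k_{0}$, whence $j\vee k\leq k_{0}$ and thus $a\leq k_{0}$, contradicting $a\nleq k_{0}$. Therefore $a\leq j$ or $a\leq k$. Running the same argument over an arbitrary family recovers complete strong hollowness directly, consistent with the compactness upgrade.

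The one genuine subtlety---and the place where compactness is indispensable---is the forward direction: passing from $a\leq\bigvee T(a)$ to $a\leq k$ for a single $k$ requires the property for \emph{arbitrary} joins, since $T(a)$ is typically infinite, and plain strong hollowness (finite joins only) would not suffice. Compactness is exactly what bridges this gap, through the finitely generated case already established; the converse, by contrast, needs no such hypothesis.
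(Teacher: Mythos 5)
Your proof is correct, and its forward half is essentially the paper's own argument: use compactness to upgrade strong hollowness to complete strong hollowness (via the earlier proposition on finitely generated elements), then note that $a\leq\kappa(a)=\bigvee T(a)$ would yield $a\leq k$ for some $k\in T(a)$, a contradiction, so $\kappa(a)\in T(a)$ is the greatest element of $T(a)$. Where you genuinely diverge is the converse. The paper, given the greatest element $j$ and a family $\{i_\lambda\}$ with $a\leq\bigvee_\lambda i_\lambda$ and $a\nleq i_\lambda$ for all $\lambda$, introduces for each $\lambda$ the set $U_\lambda=\{k\mid i_\lambda\leq k \text{ and } a\nleq k\}$, uses compactness of $a$ and Zorn's lemma to produce a maximal element $u_\lambda$ of $U_\lambda$, identifies $u_\lambda=j$, and only then concludes $i_\lambda\leq j$. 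You instead observe directly that $a\nleq i_\lambda$ means $i_\lambda\in T(a)$, hence $i_\lambda\leq k_0$ for the hypothesized greatest element $k_0$, so $\bigvee_\lambda i_\lambda\leq k_0$ and $a\leq k_0$, a contradiction. Your route is shorter and proves slightly more: the converse implication needs neither compactness nor Zorn's lemma, so in any complete lattice the existence of a greatest element of $T(a)$ already forces $a$ to be completely strongly hollow. The paper's heavier machinery would only become necessary if the hypothesis were weakened from a greatest to a merely maximal element not above $a$ (and even there its final step $u_\lambda=j$ still appeals to greatestness). Your closing remark that compactness is indispensable only in the forward direction is also accurate: in a chain such as the real unit interval with multiplication given by meet, the top element is strongly hollow, yet $T(1)$ has no greatest element.
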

    
	\begin{proof}
		Suppose $a$ is a non-zero strongly hollow compact element of $Q$. Then if $k \in T(a)$, we have $k \leq \kappa(a)$. If $a \leq \kappa(a)$ then, since $a$ is completely strongly hollow, we have $a \leq k$ for some $k \in T(a)$, a contradiction. Thus $a \nleq \kappa(a)$ and $\kappa(a)$ is the greatest element not greater than $a$.
		
		Now suppose there exists a greatest element $j$ with respect to not being greater than $a$. Consider an arbitrary family $\{i_\lambda\}_{\lambda \in \Omega}\subseteq Q$ such that $a \leq \bigvee_{\lambda \in \Omega} i_\lambda$. Assume, for contradiction, that $a \nleq i_\lambda$ for all $\lambda$. For fixed $\lambda$, define
		\[
		U_\lambda := \{ k \mid i_\lambda \leq k \text{ and } a \nleq k \}.
		\]
		We show $U_\lambda$ has a maximal element. Let $C \subseteq U_\lambda$ be a chain. Then $\bigvee C \in U_\lambda$ because if $a \leq \bigvee C$, then by compactness of $a$, $a \leq \bigvee F$ for some finite $F \subseteq C$. Since $F$ is a finite chain, $\bigvee F \in F$, so $a \leq k$ for some $k \in C$, a contradiction. Hence $\bigvee C \in U_\lambda$, so by Zorn’s Lemma, $U_\lambda$ has a maximal element $u_\lambda$.
		Now, each $u_\lambda$ is not greater than $a$ and maximal with this property, so $u_\lambda = j$. Thus $i_\lambda \leq j$ for all $\lambda$, and hence, $a \leq \bigvee i_\lambda \leq j$, contradicting $j \nleq a$. So, some $i_\lambda$ satisfies $a \leq i_\lambda$. Hence, $a$ is completely strongly hollow.
	\end{proof}
	
	\begin{proposition}
		Let $a$ be a non-zero compact element of $Q$. Then $a$ is completely strongly hollow if and only if $a \nleq \kappa(a)$.
	\end{proposition}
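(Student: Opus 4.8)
The plan is to deduce both directions directly from Theorem~\ref{T2.7}, which characterizes non-zero compact strongly hollow elements by the existence of a greatest element with respect to not being greater than $a$, and which identifies that element as $\kappa(a)$. The guiding observation is that $\kappa(a) = \bigvee T(a)$ is by construction an upper bound for $T(a)$, so it is the \emph{greatest} element of $T(a)$ precisely when it belongs to $T(a)$, that is, precisely when $a \nleq \kappa(a)$. Thus the condition $a \nleq \kappa(a)$ is exactly what turns the supremum $\kappa(a)$ into a genuine maximal witness for the hypothesis of Theorem~\ref{T2.7}.

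For the forward direction, I would suppose $a$ is completely strongly hollow and argue by contradiction. If $a \leq \kappa(a) = \bigvee T(a)$, then the completely strongly hollow property yields some $k \in T(a)$ with $a \leq k$, contradicting the defining condition $T(a) = \{k \mid a \nleq k\}$. Hence $a \nleq \kappa(a)$. This is precisely the step already executed in the first paragraph of the proof of Theorem~\ref{T2.7}, so it can be quoted rather than reproved.

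For the converse, I would assume $a \nleq \kappa(a)$. Then $\kappa(a) \in T(a)$, and since $\kappa(a)$ dominates every element of $T(a)$ by its definition as their join, it is the greatest element of $Q$ with respect to not being greater than $a$. Invoking Theorem~\ref{T2.7} (whose hypothesis that $a$ be non-zero and compact is exactly what is assumed here) then immediately gives that $a$ is strongly hollow, and therefore completely strongly hollow.

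I do not anticipate any substantive obstacle, as the statement is essentially a reformulation of Theorem~\ref{T2.7} phrased in terms of the explicit element $\kappa(a)$. The only point warranting care is the distinction between $\kappa(a)$ being merely the supremum of $T(a)$ and being an actual maximum of $T(a)$; making explicit that $a \nleq \kappa(a)$ is equivalent to $\kappa(a) \in T(a)$ is what cleanly bridges the definition of $\kappa(a)$ with the ``greatest element'' formulation supplied by Theorem~\ref{T2.7}.
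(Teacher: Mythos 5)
Your proposal is correct, but the converse is handled by a genuinely different route than the paper's. For the forward direction you and the paper do the same thing: quote Theorem~\ref{T2.7} (or its first paragraph, which is the identical contradiction argument). For the converse, however, the paper does \emph{not} go back through Theorem~\ref{T2.7}; it gives a self-contained two-line argument: if $a \leq r \vee s$ with $a \nleq r$ and $a \nleq s$, then $r, s \in T(a)$, so $r \vee s \leq \kappa(a)$, whence $a \leq \kappa(a)$, a contradiction; this shows $a$ is strongly hollow, and compactness then upgrades this to completely strongly hollow. You instead observe that $a \nleq \kappa(a)$ is equivalent to $\kappa(a) \in T(a)$, hence to $\kappa(a)$ being a genuine maximum (not just a supremum) of $T(a)$, and then invoke the backward implication of Theorem~\ref{T2.7}. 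Your reduction is valid (the hypotheses match: $a$ is non-zero and compact), and it makes the conceptual point transparent that the proposition is a reformulation of Theorem~\ref{T2.7}; the cost is that you route a two-line fact through the heavier machinery of that theorem's proof, which uses Zorn's lemma. The paper's direct argument is more elementary and, as a side benefit, shows that the implication ``$a \nleq \kappa(a)$ implies $a$ strongly hollow'' requires no compactness at all --- compactness enters only in passing from strongly hollow to completely strongly hollow.
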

	
	\begin{proof}
		If $a$ is strongly hollow, then by Proposition \ref{T2.7}, $a\nleq\kappa(a)$.
		Conversely, suppose $a \nleq \kappa(a)$ and $a \leq r \vee s$ for some $r, s \in Q$. If $a \nleq r$ and $a \nleq s$, then $r, s \leq \kappa(a)$, and thus, $a \leq r \vee s \leq \kappa(a)$, contradiction. Therefore, $a$ is strongly hollow, and thus, completely strongly hollow (since $a$ is compact).
	\end{proof}
	
	\begin{corollary}
		If $a$ and $b$  are completely strongly hollow elements in $Q$ then, \[ a\leq b \Rightarrow \kappa(a)\leq\kappa(b).\]
	\end{corollary}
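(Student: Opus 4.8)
The plan is to reduce the claimed inequality to a straightforward inclusion between the index sets $T(a)$ and $T(b)$, after which monotonicity of the join does all the work. Recall that $T(a) = \{k \mid a \nleq k\}$ and $\kappa(a) = \bigvee T(a)$, and likewise for $b$.

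First I would establish that the hypothesis $a \leq b$ forces $T(a) \subseteq T(b)$. Take any $k \in T(a)$, so that $a \nleq k$. If we had $b \leq k$, then $a \leq b \leq k$ would contradict $a \nleq k$; hence $b \nleq k$, i.e. $k \in T(b)$. This single step is nothing more than the contrapositive of transitivity of $\leq$, and it is the only genuine content of the argument. I would then conclude immediately: since $T(a) \subseteq T(b)$ and the join is order-preserving on subsets in a complete lattice, $\kappa(a) = \bigvee T(a) \leq \bigvee T(b) = \kappa(b)$.

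I do not expect any real obstacle here; in fact the argument uses neither compactness nor the completely strongly hollow hypothesis on $a$ and $b$, so this monotonicity of $\kappa$ holds for arbitrary elements. The completely strongly hollow assumption is only what makes $\kappa(a)$ and $\kappa(b)$ meaningful as the greatest elements not above $a$ and $b$ respectively (by Theorem~\ref{T2.7}). If one preferred to argue through that characterization instead, I would note that $b \nleq \kappa(a)$ (else $a \leq b \leq \kappa(a)$ would contradict $a \nleq \kappa(a)$), so that $\kappa(a)$ lies among the elements not above $b$ and is therefore dominated by their greatest member $\kappa(b)$. Either route yields $\kappa(a) \leq \kappa(b)$ in one line of reasoning.
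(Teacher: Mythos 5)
Your proposal is correct, and your primary argument takes a genuinely more elementary route than the paper's. The paper proves the stated implication exactly along the lines of your \emph{alternative} route: from $a\leq b$ and $a\nleq\kappa(a)$ (this is where complete strong hollowness of $a$ enters) it deduces $b\nleq\kappa(a)$, i.e.\ $\kappa(a)\in T(b)$, whence $\kappa(a)\leq\bigvee T(b)=\kappa(b)$. Your main argument instead observes the inclusion $T(a)\subseteq T(b)$ and concludes by monotonicity of joins; as you correctly point out, this uses no hypothesis on $a$ or $b$ at all, so it shows that $\kappa$ is order-preserving on all of $Q$ --- a strictly more general fact and arguably the cleaner way to record it. What the paper's style of argument buys is the converse: the paper's proof actually continues past the stated claim and also establishes $\kappa(a)\leq\kappa(b)\Rightarrow a\leq b$, and that direction genuinely needs $b\nleq\kappa(b)$, i.e.\ the hollowness hypothesis; your $T$-inclusion argument cannot yield it, precisely because monotonicity of $\kappa$ holds for arbitrary elements while the converse fails for them. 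One minor caution on your alternative route: you invoke Theorem~\ref{T2.7} to say $\kappa(b)$ is the greatest element not above $b$, but that theorem assumes $b$ nonzero and compact, which the corollary does not; the safe (and sufficient) step is the one the paper uses, namely that $\kappa(a)\in T(b)$ gives $\kappa(a)\leq\bigvee T(b)=\kappa(b)$ by the definition of $\kappa(b)$ alone (and $b\nleq\kappa(b)$, if needed, follows directly from applying the completely strongly hollow property of $b$ to $b\leq\bigvee T(b)$, with no compactness required).
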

	
	\begin{proof}
		Suppose $a\leq b$ and $b\leq\kappa(a)$, then $a\leq b\leq \kappa(a)$, which is a contradiction. Then by the definition of $\kappa(b)$ and the fact that $b\nleq \kappa(a)$, we have  $\kappa(a)\leq\kappa(b)$. Now suppose $\kappa(a)\leq\kappa(b)$. Notice that if $a\nleq b$, then $b\leq \kappa(a)$, and thus, $b\leq \kappa(b)$, which is a contradiction, thus $a\leq b$.
	\end{proof}
	
	Our next goal is to study strongly hollow elements in quotient lattices.    Let $Q$ be a multiplicative lattice and $i\in Q$. According to \cite{Dil62}, a \emph{quotient lattice}, $Q/i$, is a complete sublattice of Q with a top $1$ and a bottom $i$, with elements corresponding to $Q/i:=\{x\in Q\mid x\geqslant i\}=:[i,1]$. It can be endowed with a multiplication (denoted by $\circ$) in order to make it a multiplicative lattice, \[a \circ b := (a\cdot b)\vee i \quad \forall a,b\in Q.\]
	$Q/i$ is closed under this multiplication, and residuals are equivalent in both $Q$ and $Q/i$. Furthermore, any element $b\in Q$ has a corresponding element $b\vee i \in Q/i$, we shall often refer to $b\vee i$ as $b/i$.
	For the rest of this paper, we shall denote by $Q/i$ the multiplicative lattice given by $(Q/i,\wedge, \vee,\circ, 1, i)$ for any element $i\in Q$.
	
	\begin{proposition}
		Let $a$ be a strongly hollow element of $Q$ and $i\in Q$. Then $a\vee i$ is strongly hollow in $Q/i$.
	\end{proposition}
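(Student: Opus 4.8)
The plan is to transfer strong hollowness along the inclusion of $Q/i$ into $Q$, using the fact that $Q/i = [i,1]$ is a complete sublattice, so that both the partial order and arbitrary joins computed in $Q/i$ agree with those computed in $Q$. The key observation is that the definition of strong hollowness refers only to the join and the order, never to the multiplication, so the fact that $Q/i$ carries the modified product $\circ$ is irrelevant here and needs no attention.

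Concretely, I would begin by fixing $j$, $k \in Q/i$, equivalently $j$, $k \in Q$ with $i \leq j$ and $i \leq k$, and assuming $a \vee i \leq j \vee k$, where this join is taken in $Q/i$ and hence coincides with the join in $Q$. Since $a \leq a \vee i$, transitivity immediately gives $a \leq j \vee k$ in $Q$.

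I would then invoke the hypothesis that $a$ is strongly hollow in $Q$, which yields $a \leq j$ or $a \leq k$. Assuming without loss of generality that $a \leq j$, I would combine this with $i \leq j$ to obtain $a \vee i \leq j$. This is precisely the assertion that $a \vee i$ is absorbed into one of the two joinands inside $Q/i$, and so $a \vee i$ is strongly hollow in $Q/i$.

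There is no genuine obstacle in this argument; the only point requiring care is the remark that the join operation and the order on the interval $[i,1]$ are inherited unchanged from $Q$, so that the symbols $\leq$ and $\vee$ carry the same meaning in both lattices. This is guaranteed by $Q/i$ being a complete sublattice of $Q$, as recorded above. Once this is acknowledged, the whole proof reduces to a single application of strong hollowness of $a$ in $Q$.
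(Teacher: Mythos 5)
Your proof is correct and follows essentially the same route as the paper: reduce $a \vee i \leq j \vee k$ in $Q/i$ to $a \leq j \vee k$ in $Q$ (using that order and joins in $[i,1]$ are inherited from $Q$), apply strong hollowness of $a$, and then rejoin with $i \leq j$. The only cosmetic difference is that the paper opens with a case split on whether $a \leq i$ (handling the bottom element of $Q/i$ separately), which your uniform argument shows is unnecessary.
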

	
	\begin{proof}
		If $a\leq i$, then $a/i =a\vee i = i$, which is the bottom element of $Q/i$, and thus, is strongly hollow, and we are done. Therefore, suppose  $a\nleq i$ and $a\vee i \leq r\vee s$ for some $r,s\in Q/i$. Then $a\leq r\vee s \Rightarrow a\leq r \; \text{or} \; a\leq s$. Without loss of generality, assume $a\leq r$. Since $r\in Q/i$, we have that $r\geqslant i$. Hence $a\vee i\leq r$.
	\end{proof}
	
	Now one can show directly that $a\vee i$ is compact in $Q/i$ and obtain an equivalence of the above proof for completely strongly hollow elements. We include the proof below to demonstrate the use of $\kappa(a)$. 
	
	\begin{proposition}
		Let $a$ be a completely strongly hollow element of $Q$ and $i\in Q$. Then $a\vee i$ is completely strongly hollow in $Q/i$.
	\end{proposition}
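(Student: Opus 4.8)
The plan is to reduce the claim to the $\kappa$-characterization of completely strongly hollow elements proved above (the Proposition following Theorem~\ref{T2.7}), applied inside the multiplicative lattice $Q/i$. Note first that every completely strongly hollow element is automatically compact: if $a\leq\bigvee_{\lambda}i_\lambda$ then $a\leq i_{\lambda'}$ for a single $\lambda'$, so in particular $a$ lies below a finite subjoin. Hence $a$ is compact in $Q$, and a routine argument then shows $a\vee i$ is compact in $Q/i$: every join in $Q/i$ is computed as in $Q$ because $Q/i=[i,1]$ is a complete sublattice, so a cover of $a\vee i$ in $Q/i$ is a cover of $a$ in $Q$, which $a$ — and therefore $a\vee i$ — already meets finitely. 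Thus it suffices to check that $a\vee i$ does not lie below the element produced by $\kappa(\cdot)$ inside $Q/i$.

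First I would dispose of the degenerate case $a\leq i$: then $a\vee i=i$ is the bottom of $Q/i$, which is trivially completely strongly hollow. So assume $a\nleq i$; then $a\vee i$ is non-zero in $Q/i$. Write $T'(a\vee i)=\{k\in Q/i\mid a\vee i\nleq k\}$ and let $\kappa'(a\vee i)=\bigvee T'(a\vee i)$ denote the element $\kappa$ computed inside $Q/i$. The key observation is the translation of the defining relation across the quotient: for $k\in Q/i$ we have $i\leq k$, so $a\vee i\leq k$ if and only if $a\leq k$. Consequently $T'(a\vee i)=\{k\geq i\mid a\nleq k\}=T(a)\cap[i,1]$.

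The heart of the argument is to identify $\kappa'(a\vee i)$ with $\kappa(a)$. Because $a\nleq i$, the element $i$ itself lies in $T(a)$, whence $i\leq\kappa(a)$ and so $\kappa(a)\in[i,1]=Q/i$. Since $a$ is completely strongly hollow in $Q$, Theorem~\ref{T2.7} tells us that $\kappa(a)$ is the greatest element of $Q$ not above $a$; in particular every $k\in T'(a\vee i)$ satisfies $k\leq\kappa(a)$, while $\kappa(a)$ itself belongs to $T'(a\vee i)$. As joins in $Q/i$ agree with those in $Q$, this forces $\kappa'(a\vee i)=\kappa(a)$. Finally $a\nleq\kappa(a)$ by the characterization, and since $\kappa(a)\geq i$ this gives $a\vee i\nleq\kappa'(a\vee i)$. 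Applying the equivalence ``$a$ completely strongly hollow $\iff a\nleq\kappa(a)$'' now inside $Q/i$ to the non-zero compact element $a\vee i$ yields that $a\vee i$ is completely strongly hollow, as required.

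I expect the only delicate point to be the bookkeeping between the two lattices: one must verify that joins, residuals, and the set $T(\cdot)$ behave compatibly under passage to $[i,1]$, and in particular that $\kappa(a)$ genuinely lands in $Q/i$ — which is exactly where the hypothesis $a\nleq i$ enters. Everything else is a direct application of the already-established $\kappa$-criterion for non-zero compact elements.
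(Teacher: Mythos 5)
Your proof is correct and takes essentially the same route as the paper's: both arguments dispose of the case $a\leq i$, compare $T_{Q/i}(a\vee i)$ with $T(a)$ to conclude $a\vee i\nleq \kappa_{Q/i}(a\vee i)$, prove compactness of $a\vee i$ in $Q/i$ from compactness of $a$ in exactly the same way, and then invoke the $\kappa$-criterion inside $Q/i$. The only (harmless) difference is that you pin down $\kappa_{Q/i}(a\vee i)=\kappa(a)$ exactly, whereas the paper only uses the inequality $\kappa_{Q/i}(a\vee i)\leq\kappa(a)$ to reach its contradiction.
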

	
	\begin{proof}
		If $a\leq i$, then $a/i =a\vee i = i$, which is the bottom element of $Q/i$, and we are done. Thus, assume that $a\nleq i$. If $a\vee i \leq a$, then $a\vee i = a$, and thus $a\vee i$ is strongly hollow in the $Q/i$ since join and meet are the same in the sublattice as in the normal lattice. Thus, we can assume $a\vee i \nleq a$, and additionally notice that $a\vee i \nleq i$ because $a\nleq i $. Thus, suppose, for a contradiction, that \[a\vee i \leq \kappa_{Q/i}(a\vee i)= \bigvee T_{Q/i}(a\vee i) = \bigvee\{r\mid r\geqslant i \; \& \; a\vee i \nleq r\},\] and hence, $a\leq \kappa_{Q/i}(a\vee i)$. If $x\in T_{Q/i}(a\vee i)$, then $x\geqslant i$ and $x\ngeq a\vee i$, which implies that $x\ngeq a$, hence $x\in T(a)$. Then $T_{Q/i}(a\vee i) \subseteq T(a)$, and thus, $\kappa_{Q/i}(a\vee i) \leq \kappa(a)$. This gives that $a\leq \kappa(a)$, which is a contradiction. Therefore, $a\vee i\nleq\kappa_{Q/i}(a\vee i)$. Consider $a\vee i\leq \bigvee_{\alpha\in\Omega}r_\alpha$ for $r_\alpha \geqslant i$. Since $a$ is compact, there is a finite subset $F\subseteq \Omega$ such that $a\leq \bigvee_{\alpha\in F}r_\alpha$ ,which gives that $a\vee i\leq\bigvee_{\alpha \in F}r_\alpha$, and so $a\vee i$ is compact in $Q/i$. Therefore, $a\vee i$ is completely strongly hollow in $Q/i$.
	\end{proof}
	
	We next study the nature of elements of the form $L_a$, whenever $a$ is a strongly hollow element.
	
	\begin{theorem}
		Let $a\in Q$ be a strongly hollow element which is also a join principal. Then $L_a$ is the top element or a maximal element in Q.
	\end{theorem}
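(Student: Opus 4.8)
The plan is to argue by a dichotomy on whether $a\leqslant\kappa(a)$, the two outcomes producing exactly the two alternatives in the statement. Before starting I would record two routine facts about residuals that I will use repeatedly: for all $x,y\in Q$ one has $(y:a)\,a\leqslant y$ (by distributivity of multiplication over the join defining the residual), which upgrades to the adjunction $x\leqslant(y:a)\iff xa\leqslant y$; and $(a:a)=1$ since $1\cdot a\leqslant a$. Recall that $L_a=(\kappa(a):a)$ with $\kappa(a)=\bigvee T(a)=\bigvee\{k\mid a\nleqslant k\}$. In the first case, $a\leqslant\kappa(a)$, the element $1$ already lies in the set $\{x\mid xa\leqslant\kappa(a)\}$ because $1\cdot a=a\leqslant\kappa(a)$, so $L_a=1$ is the top element and we are done.

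The substantive case is $a\nleqslant\kappa(a)$, where I claim $L_a$ is maximal. First I would establish the key description of $\kappa(a)$: for every $k\in Q$, $a\nleqslant k\iff k\leqslant\kappa(a)$. Indeed $a\nleqslant k$ means $k\in T(a)$, hence $k\leqslant\kappa(a)$; conversely if $k\leqslant\kappa(a)$ and $a\leqslant k$ then $a\leqslant\kappa(a)$, contradicting the case hypothesis. Thus $\kappa(a)$ is the greatest element not above $a$. Next, $L_a<1$: from $L_a\,a=(\kappa(a):a)a\leqslant\kappa(a)$ together with $a\nleqslant\kappa(a)$ we cannot have $L_a=1$. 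To prove maximality, suppose $L_a<t$ for some $t\in Q$ and aim to force $t=1$. Since $L_a<t$ gives $t\nleqslant L_a$, the adjunction yields $ta\nleqslant\kappa(a)$, and the key description then gives $a\leqslant ta$; as $ta\leqslant a$ always holds, we get $ta=a$. Now I feed this into join principality of $a$: applying the identity $x\vee(y:a)=((xa\vee y):a)$ with $x=t$ and $y=\kappa(a)$ gives
\[
t\vee L_a=\bigl((ta\vee\kappa(a)):a\bigr)=\bigl((a\vee\kappa(a)):a\bigr)=1,
\]
the last equality because $a\leqslant a\vee\kappa(a)$ puts $1$ in the residual's defining set. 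Since $L_a\leqslant t$, the left-hand side is $t$, so $t=1$. Hence no element lies strictly between $L_a$ and $1$, and $L_a$ is maximal.

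The main obstacle is the final collapse: one must notice that $t\nleqslant L_a$ propagates through the residual adjunction and the description of $\kappa(a)$ to the equality $ta=a$, and then substitute this into the join-principal identity to reduce the residual to $1$. The supporting residual bookkeeping ($(y:a)a\leqslant y$, the adjunction, and $((a\vee\kappa(a)):a)=1$) is elementary, but the choice $x=t,\;y=\kappa(a)$ in condition~(2) is the decisive move. Here join principality of $a$ is doing the real work, while strong hollowness is what makes the geometry of $\kappa(a)$ meaningful and, for compact (hence completely strongly hollow) $a$, forces $a\nleqslant\kappa(a)$ and thus the maximal alternative; for a general strongly hollow $a$ the possibility $a\leqslant\kappa(a)$ is exactly what yields the top-element alternative.
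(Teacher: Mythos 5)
Your proof is correct, and it reaches the conclusion by a noticeably more direct route than the paper. Both arguments open with the same dichotomy ($a\leqslant\kappa(a)$ gives $L_a=1$ immediately; otherwise $\kappa(a)$ is the greatest element not above $a$), but the paper then passes to the quotient lattice $Q/\kappa(a)$: it shows $a\vee\kappa(a)$ is a minimal (atomic) element there, computes the annihilator $(0:(a\vee\kappa(a)))_{Q/\kappa(a)}$ in two ways, identifies it with $\kappa(a)\vee L_a=L_a$, and invokes weak join principality of $a\vee\kappa(a)$ \emph{in the quotient} to force any strictly larger element to be $1$. Your argument stays entirely inside $Q$: the step $t>L_a\Rightarrow ta\nleqslant\kappa(a)\Rightarrow ta=a$ is the exact analogue of the paper's $l\circ(a\vee\kappa(a))=a\vee\kappa(a)$, and your substitution $x=t$, $y=\kappa(a)$ into the join-principal identity, yielding $t=t\vee L_a=((a\vee\kappa(a)):a)=1$, plays the role of their weak-join-principal collapse. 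What your version buys: it is shorter, it needs only the residual adjunction $x\leqslant(y:a)\iff xa\leqslant y$ rather than the transfer of multiplication, residuals, and (weak) join principality to $Q/\kappa(a)$, and it makes visible that strong hollowness is never actually invoked in the maximality argument --- the statement holds for any join principal element, with hollowness (via compactness) only serving elsewhere to force the maximal alternative; notably, the paper's proof also derives minimality of $a\vee\kappa(a)$ directly from the case hypothesis rather than from hollowness, so your observation is consistent with, and clarifies, their argument. What the paper's version buys is integration with its surrounding machinery: the quotient-lattice formulation reuses its earlier propositions on hollowness in $Q/i$ and sets up the corollary that follows.
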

	
	\begin{proof}
		There are two cases: $a\leq \kappa(a)$ or $a\nleq\kappa(a)$. If $a\leq\kappa(a)$, then \(1\cdot a=a\leq\kappa(a)\) and thus $L_a=1$, and we are done. Thus, assume $a\nleq \kappa(a)$. Then $a\vee\kappa(a)\neq\kappa(a)=0_{Q/\kappa(a)}$ and $a\vee\kappa(a)$ is strongly hollow in $Q/\kappa(a)$. Suppose $r>\kappa(a)$ with $r\leq a\vee\kappa(a)$. If $a\leq r$, then $r\leq a\vee\kappa(a)\leq r\vee\kappa(a)=r$, and so, $r=a\vee\kappa(a)$. If $a\nleq r$, then $r\leq\kappa(a)$, which is a contradiction. Thus, $a\vee\kappa(a)$ is minimal in $Q/\kappa(a)$. Now consider  
		\begin{align*}
			(0:(a\vee\kappa(a)))_{Q/\kappa(a)}&=\bigvee\{x\geqslant\kappa(a)\mid  x\circ(a\vee\kappa(a))=\kappa(a)\} \\
			&=\bigvee\{x\vee\kappa(a)\mid x\in Q;\: (x\vee\kappa(a))\circ(a\vee\kappa(a))=\kappa(a)\} \\
			&=\bigvee\{x\vee\kappa(a)\mid x\in Q;\: [(x\vee\kappa(a))\cdot(a\vee\kappa(a))]\vee\kappa(a)=\kappa(a)\} \\
			&=\bigvee\{x\vee\kappa(a)\mid x\in Q;\: [x\cdot a\vee\kappa(a)\cdot a\vee x\cdot\kappa(a)\vee\kappa^2(a)]\vee\kappa(a)=\kappa(a)\} \\
			&=\bigvee\{x\vee\kappa(a)\mid x\in Q;\: x\cdot a\vee\kappa(a)=\kappa(a)\}\\
			&=\bigvee\{x\vee\kappa(a)\mid x\in Q;\: x\cdot a\leq\kappa(a)\}.   
		\end{align*}
		But since $x\circ( a\vee\kappa(a))\leq a\vee\kappa(a)$ and $a\vee\kappa(a)$ is minimal, we also have that
		\begin{align*}
			(0:(a\vee\kappa(a)))_{Q/\kappa(a)}&=\bigvee\{x\in Q\mid  (x\vee\kappa(a))\circ(a\vee\kappa(a))=\kappa(a)\}\\&=\bigvee\{x\in Q\mid  (x\vee\kappa(a))\circ(a\vee\kappa(a))\neq a\vee\kappa(a)\}\\
			&=\bigvee\{x\in Q\mid  (x\cdot a)\vee \kappa(a)\neq a\vee\kappa(a)\}.
		\end{align*}
		Let $l\geqslant (0:(a\vee\kappa(a)))_{Q/\kappa(a)}$. Then $l\circ (a\vee\kappa(a))=a\vee\kappa(a)$ and since $a\vee\kappa(a)$ is weak join principal in $Q/\kappa(a)$, we have $1\leq l\vee (0:(a\vee\kappa(a)))_{Q/\kappa(a)}=l$. Thus $(0:(a\vee\kappa(a)))_{Q/\kappa(a)}$ is maximal in Q. But notice that \[(0:(a\vee\kappa(a)))_{Q/\kappa(a)}=\bigvee\{x\vee\kappa(a)|x\in Q;\: x\cdot a\leq\kappa(a)\}=\kappa(a)\vee L_a=L_a,\] and thus, $L_a$ is maximal in $Q$.
	\end{proof}
	\begin{corollary}
		If $a$ is completely strongly hollow and join-principal in $Q$, then $L_a$ is a maximal element.
	\end{corollary}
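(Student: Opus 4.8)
The plan is to read this off from the preceding Theorem, whose conclusion is that $L_a$ is either the top element $1$ or a maximal element of $Q$; the only additional work is to rule out the case $L_a = 1$. First I would check that the Theorem applies: since $a$ is completely strongly hollow, restricting the defining condition to two-element families shows $a$ is strongly hollow, and $a$ is join-principal by hypothesis, so both hypotheses of the Theorem are met and we immediately obtain that $L_a \in \{1\} \cup \mathcal{M}$.

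The key reduction is the observation that the two cases in the Theorem are governed precisely by the comparison of $a$ with $\kappa(a)$. Indeed, $L_a = (\kappa(a):a) = \bigvee\{x \mid x a \leq \kappa(a)\}$, so $L_a = 1$ holds if and only if $1 \cdot a = a \leq \kappa(a)$. Thus ruling out the top-element case is exactly the statement that $a \nleq \kappa(a)$, and this is where complete strong hollowness does the work that mere strong hollowness cannot.

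To establish $a \nleq \kappa(a)$ I would argue directly from the definition, applied to the family $T(a) = \{k \mid a \nleq k\}$ indexing $\kappa(a) = \bigvee T(a)$. If we had $a \leq \kappa(a) = \bigvee_{k \in T(a)} k$, then, since $a$ is completely strongly hollow, $a \leq k$ for some $k \in T(a)$, contradicting $a \nleq k$. (Since complete strong hollowness forces $a \neq 0$, the set $T(a)$ is non-empty, so this makes sense.) Hence $a \nleq \kappa(a)$, so by the reduction above $L_a \neq 1$, and the Theorem then forces $L_a$ to be a maximal element.

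The main point to be careful about — and really the only subtlety — is that this inequality $a \nleq \kappa(a)$ is obtained here \emph{directly} from the completely-strongly-hollow definition rather than via the earlier Proposition characterizing complete strong hollowness by $a \nleq \kappa(a)$, since that Proposition carries a compactness hypothesis which is not assumed in this corollary. I expect no genuine obstacle beyond correctly identifying that the top-element branch of the Theorem corresponds exactly to $a \leq \kappa(a)$ and that complete strong hollowness excludes it.
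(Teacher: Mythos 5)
Your proof is correct and follows exactly the route the paper intends: the corollary is an immediate consequence of the preceding theorem once one observes that complete strong hollowness, applied directly to the (non-empty) family $T(a)$, gives $a \nleq \kappa(a)$ and hence rules out the case $L_a = 1$. Your care in avoiding the compactness-dependent proposition and in noting that $(\kappa(a):a)\cdot a \leq \kappa(a)$ pins down the top-element case matches the theorem's own case analysis, so there is nothing to add.
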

	
	\begin{proposition}\label{T2.13}
		Let $Q$ be a $C$-lattice and $a\in Q$ a join principal, compact, and strongly hollow element. Then $a_{L_a}$ is strongly hollow in $Q_{L_a}$. 
	\end{proposition}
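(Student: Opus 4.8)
The plan is to bypass the functorial route and instead characterize strong hollowness in $Q_{L_a}$ directly via Theorem~\ref{T2.7}. Note first that the hypotheses make the statement meaningful: since $a$ is compact and strongly hollow it is completely strongly hollow, so $a\nleq\kappa(a)$ and hence $L_a=(\kappa(a):a)\neq 1$; together with $a$ being join principal, the preceding results force $L_a$ to be a maximal (hence prime) element, which is exactly what makes $S:=\{s\in Q^{*}\mid s\nleq L_a\}$ multiplicatively closed and the localization $Q_{L_a}=Q_S$ legitimate. One might hope to invoke the functoriality of strong hollowness, but the right adjoint of localization is the inclusion $Q_{L_a}\hookrightarrow Q$, which does not preserve binary joins (the join of two saturated elements need not be saturated), so that proposition does not apply. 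Instead I would show $a_{L_a}$ is a nonzero compact element of $Q_{L_a}$ and exhibit the greatest element of $Q_{L_a}$ that is not greater than $a_{L_a}$; Theorem~\ref{T2.7}, whose proof needs only compactness and completeness and so applies verbatim in $Q_{L_a}$, then yields that $a_{L_a}$ is strongly hollow.

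The second step is to set up the order/compactness dictionary between $Q$ and $Q_{L_a}$. I would record that $c\leq c_{L_a}$ for all $c$ (using $1\in S$); that $a_{L_a}\leq c_{L_a}\iff a\leq c_{L_a}$ by saturation; and, crucially using that $a$ is compact and $S$ is multiplicatively closed, that $a\leq c_{L_a}\iff at\leq c$ for some $t\in S$ (a finite subcover collapses to the single element $t=\prod s_i\in S$). The same compactness argument, now invoking that products of compact elements stay compact in a $C$-lattice, shows $a_{L_a}$ is compact in $Q_{L_a}$: from $a_{L_a}\leq\bigvee_\lambda (b_\lambda)_{L_a}=(\bigvee_\lambda b_\lambda)_{L_a}$ one obtains $at\leq\bigvee_\lambda b_\lambda$ with $at$ compact, hence a finite subcover, which lifts back to a finite subcover of $a_{L_a}$. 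Finally $a_{L_a}\neq 0$ because $0\neq a\leq a_{L_a}$.

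The heart of the argument is to identify $\kappa(a)_{L_a}$ as the greatest element of $Q_{L_a}$ not greater than $a_{L_a}$. Here the key input is Theorem~\ref{T2.7} in $Q$: since $a$ is completely strongly hollow, $\kappa(a)$ is the greatest element of $Q$ not greater than $a$, which I would restate as the equivalence $a\nleq c\iff c\leq\kappa(a)$ for $c\in Q$. Given any $b_{L_a}\in Q_{L_a}$ with $a_{L_a}\nleq b_{L_a}$, the dictionary gives $a\nleq b_{L_a}$, and applying the equivalence to the element $b_{L_a}\in Q$ yields $b_{L_a}\leq\kappa(a)\leq\kappa(a)_{L_a}$. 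Thus every element of $Q_{L_a}$ not greater than $a_{L_a}$ lies below $\kappa(a)_{L_a}$.

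The remaining point, which I expect to be the main obstacle, is to verify that $a_{L_a}$ itself is not below $\kappa(a)_{L_a}$, and this is precisely where the defining data of $L_a$ and $S$ enter. If instead $a_{L_a}\leq\kappa(a)_{L_a}$, then by the dictionary $at\leq\kappa(a)$ for some $t\in S$; but $t\in S$ means $t\nleq L_a=(\kappa(a):a)$, which is exactly the statement $at\nleq\kappa(a)$, a contradiction. Hence $\kappa(a)_{L_a}$ is the greatest element of $Q_{L_a}$ not greater than $a_{L_a}$, and Theorem~\ref{T2.7} applied in $Q_{L_a}$ shows that $a_{L_a}$ is (completely) strongly hollow in $Q_{L_a}$, as required.
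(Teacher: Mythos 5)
Your proof is correct and follows essentially the same route as the paper's: both arguments identify $(\kappa(a))_{L_a}$ as the greatest element of $Q_{L_a}$ not above $a_{L_a}$ --- with the crucial non-containment $a_{L_a}\nleq(\kappa(a))_{L_a}$ obtained, exactly as in the paper, from the definition $L_a=(\kappa(a):a)$ --- and then conclude via Theorem~\ref{T2.7} applied to the compact element $a_{L_a}$. The differences are only bookkeeping: you derive the witness $at\leq\kappa(a)$ from compactness of $a$ and multiplicative closedness of $S$ where the paper extracts a single $x$ with $a\leq x$ using complete strong hollowness, you prove the compactness of $a_{L_a}$ and the legitimacy of the localization (maximality of $L_a$) that the paper takes for granted, and your one slip --- $0\neq a\leq a_{L_a}$ shows $a_{L_a}\neq 0$ but not $a_{L_a}\neq 0_{L_a}$ --- is harmless, since $0_{L_a}\leq(\kappa(a))_{L_a}$ while $a_{L_a}\nleq(\kappa(a))_{L_a}$ already yields what Theorem~\ref{T2.7} needs.
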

	
	\begin{proof}
		If $a_{L_a}=0_{L_a}$, then we are done. Thus, assume that $a_{L_a}\geqslant0_{L_a}$, and so, there exists at least one compact element $s\nleq L_a$. For all elements $s\in Q^*$ with $s\nleq L_a$, we have $a\cdot s\leq a$ which gives $a\leq a_{L_a}$. Suppose $a_{L_a}\leq (\kappa(a))_{L_a}$. Then $a\leq(\kappa(a))_{L_a}$, and since $a$ is completely strongly hollow, we have $a\leq x$, for some $x\in Q$ with $x\cdot s\leq\kappa(a)$, for some $s\in Q^*$ with $s\nleq L_a$. Thus $a\cdot s\leq x\cdot s\leq\kappa(a)$ which contradicts $s\nleq L_a$. Therefore, $a_{L_a}\nleq(\kappa(a))_{L_a}$. Now, let $l\in Q$  be such that $l_{L_a}\ngeq a_{L_a}$. Then we have that $l\ngeq a$, and thus $l\leq \kappa(a)\leq (\kappa(a))_{L_a}$, and hence, $(\kappa(a))$ is the greatest element with respect to not being greater than $a_{L_a}$. Since $a_{L_a}$ is compact in $Q_{L_a}$, therefore, we have that $a_{L_a}$ is completely strongly hollow by Theorem \ref{T2.7}.
	\end{proof}
	
	\begin{proposition}\label{T2.14}
		Let $Q$ be a multiplicative lattice with $1$ compact and $a\in Q$ strongly hollow, compact, and join principal. Then $Q/(0:a)$ is a quasi-local lattice with a unique maximal element, $L_a/(0:a)$.
	\end{proposition}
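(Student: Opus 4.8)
The plan is to prove that $L_a/(0:a) = L_a \vee (0:a) = L_a$ is not merely a maximal element of the quotient $Q/(0:a) = [(0:a),1]$ but its \emph{greatest proper element}; uniqueness of the maximal element then follows at once. First I would record the routine structural facts (assuming $a \neq 0$, as the case $a=0$ gives $(0:a)=1$ and a degenerate quotient). Since $0 \leq \kappa(a)$ and residuation is monotone in its numerator, $(0:a) \leq (\kappa(a):a) = L_a$, so $L_a$ genuinely lies in $Q/(0:a)$. Because $a$ is compact and strongly hollow it is completely strongly hollow (Theorem~\ref{T2.7}), so the preceding corollary applies and $L_a$ is maximal in $Q$; as the order on $Q/(0:a)$ is inherited from $Q$ with the common top $1$, $L_a$ is maximal in $Q/(0:a)$ too. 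Finally, $a \nleq \kappa(a)$ gives $1 \cdot a = a \nleq \kappa(a)$, so $L_a = (\kappa(a):a) \neq 1$ is proper.

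The heart of the argument is to show every proper element of $Q/(0:a)$ lies below $L_a$. I would take $x \geq (0:a)$ with $x \nleq L_a$ and force $x = 1$. By the residuation adjunction $x \leq (\kappa(a):a) \Leftrightarrow xa \leq \kappa(a)$, the hypothesis $x \nleq L_a$ is exactly $xa \nleq \kappa(a)$. Now I invoke the description of $\kappa(a)$ from Theorem~\ref{T2.7} as the greatest element of $Q$ not dominating $a$: any element not below $\kappa(a)$ must dominate $a$, so applied to $xa$ this yields $a \leq xa$; since always $xa \leq 1 \cdot a = a$, we get $xa = a$. At this point the join-principal hypothesis enters: taking $b = 0$ in its defining identity gives $(xa:a) = x \vee (0:a)$, which equals $x$ because $x \geq (0:a)$. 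Combining this with $xa = a$ and $(a:a) = 1$ yields $x = (xa:a) = (a:a) = 1$. Hence $x \nleq L_a$ forces $x = 1$, so every element of $Q/(0:a)$ distinct from $1$ satisfies $x \leq L_a$.

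It follows immediately that $L_a$ is the unique maximal element of $Q/(0:a)$: it is proper and maximal, and no proper element can fail to be below it. To finish I still need $1$ compact in $Q/(0:a)$. Since $Q/(0:a)$ is a complete sublattice of $Q$, arbitrary joins of families drawn from $[(0:a),1]$ agree with the ambient joins in $Q$, so compactness of $1$ in $Q$ transfers verbatim. Thus $Q/(0:a)$ has compact top and a single maximal element $L_a = L_a/(0:a)$, and is therefore quasi-local.

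The main obstacle is the middle paragraph, and specifically the synergy between two facts that individually say little: the characterization of $\kappa(a)$ as the largest element not dominating $a$, which upgrades the mere failure $x \nleq L_a$ into the rigid equality $xa = a$, and the join-principal cancellation $(xa:a) = x \vee (0:a)$, which then pins $x$ down to $1$. Everything else — monotonicity of residuals, the identity $(a:a) = 1$, and the compatibility of joins in the sublattice — is routine and needs only to be verified carefully rather than discovered.
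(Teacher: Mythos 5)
Your proof is correct, and it takes a genuinely different route from the paper's. The paper transports the whole analysis into the quotient: it first notes that $a\vee(0:a)$ is strongly hollow and weak join principal in $Q/(0:a)$, computes the quotient annihilator $(0:(a\vee(0:a)))_{Q/(0:a)}=(0:a^{2})$, and then splits into cases --- either every $m$ with $(a\vee(0:a))\circ m=a\vee(0:a)$ satisfies $a=am$, in which case join principality gives $m=1$ and Proposition~\ref{P2.9} yields quasi-locality, or some such $m$ gives $a=a\cdot(0:a^{2})\leq(0:a)$, and then a hypothetical second maximal element $n$ is eliminated by writing $a=aL_a\vee an$, discarding $a=aL_a$ via $aL_a\leq\kappa(a)$, and forcing $n=1$ from $a=an$ by join principality. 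You instead work almost entirely in $Q$ and prove the stronger, cleaner statement that $L_a$ is the \emph{greatest} proper element of $[(0:a),1]$, via the chain $x\nleq L_a\Leftrightarrow xa\nleq\kappa(a)\Rightarrow a\leq xa\Rightarrow xa=a\Rightarrow x=(xa:a)=(a:a)=1$, where the middle implication is exactly Theorem~\ref{T2.7}'s description of $\kappa(a)$ as the greatest element not above $a$, and the last step is the weak join principal identity. The two arguments share the same cancellation kernel ($xa=a$ plus join principality forces $x=1$), but the paper reaches $xa=a$ only for a maximal $x=n$, via $L_a\vee n=1$ and strong hollowness, whereas your use of $\kappa(a)$ reaches it uniformly for every $x\nleq L_a$; this is what eliminates the case analysis, the quotient-side annihilator computation, and the appeal to Proposition~\ref{P2.9}, leaving the quotient structure to matter only in the routine transfer of compactness of $1$. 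Two minor remarks: your citation of the corollary to get $L_a$ maximal in $Q$ is redundant, since your main paragraph already shows $L_a$ is the greatest proper element of the quotient and $L_a\neq 1$; and both you and the paper tacitly need $a\neq 0$ (otherwise $(0:a)=1$ and the quotient has no maximal element at all) --- you at least flag this explicitly, which the paper does not.
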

	
	\begin{proof}
		We have that $a\vee (0:a)$ is strongly hollow in $Q/(0:a)$. Suppose there exists $m\in Q/(0:a)$ such that $(a\vee (0:a))\cdot m=a\vee (0:a)$. Since $a\vee (0:a)$ is weak join principal in $Q/(0:a)$, we have \[1\leq m\vee(0:(a\vee (0:a)))_{Q/(0:a)}.\] Now, observe that 
		\begin{align*}
			(0:(a\vee (0:a)))_{Q/(0:a)}&=\bigvee\left\{x\vee (0:a) \left | (x\vee (o:a))\circ(a\vee (0:a))=(0:a)\right. \right\} \\
			&=\bigvee\left\{x\vee (0:a) \left |(x\cdot a\vee x\cdot (0:a)\vee (0:a)=(0:a)\right. \right\} \\
			&=\bigvee\left\{x\vee (0:a) \left |(x\cdot a\leq (0:a)\right. \right\} \\
			&= (0:a)\vee((0:a):a)\\
			&= ((0:a):a).
		\end{align*}
		Thus $(0:(a\vee (0:a)))_{Q/(0:a)}=(0:a^2)$ by \cite[Eqn.~2.5]{Dil62}. Since \(1=m\vee(0:a^2)\), we have that \(a=am\vee a(0:a^2)\). Since $a$ is strongly hollow, we have \[a=am\quad\text{or}\quad a=a(0:a^2).\] If $a=am$ for all such $m$, then since $a$ is join principal, we have that $1\leqslant m \vee (0:a)=m$ since $m\geqslant (0:a)$, and thus, since $a\vee (0:a)$ is a cancellation element, we have that $Q/(0:a)$ is a quasi-local lattice by Proposition \ref{P2.9}. Since $\kappa(a)\geqslant 0=a\cdot (0:a)$, we have $(0:a)\leq L_a$; and since $L_a$ is maximal, it must be the unique maximal element in $Q/(0:a)$. Otherwise, assume there exists $m\geqslant (0:a)$ such that $(a\vee (0:a))\cdot m=a\vee (0:a)$ and $a\neq am$. Then $a=a\cdot(0:a^2)$. Now, since $a\cdot(0:a^2)=\bigvee\left\{a\cdot x\mid x\cdot a^2=0 \right\}$, we have $a=a\cdot(0:a^2)\leq (0:a)$. Moreover, $L_a$ is maximal and greater than (0:a). Suppose there exists another maximal element $n\geqslant (0:a)$, and thus, $L_a\vee n=1$ implying $(a\cdot L_a)\vee(a\cdot n)=a$. Thus, since $a$ is strongly hollow, we have that $a=a\cdot L_a$ or $a=a\cdot n$ but $a\cdot L_a\leq \kappa(a)$, and thus, $a\neq a\cdot L_a$. Therefore, $a=a\cdot n$. But then, since $a$ is join principal, we have that $1\leq n\vee (0:a)=n$. So, $L_a$ is the unique maximal element in $Q/(0:a)$, and thus, it is a quasi-local lattice.
	\end{proof}
	
	\begin{proposition}\label{T2.15}
		Let a be a non-zero element of a C-lattice Q such that a is compact and join principal. Then a is strongly hollow if and only if the lattice $Q/(0:a)$ is a quasi-local lattice and either $a_m=0_m$ or $a_m$ is strongly hollow in $Q_m$ for every maximal element m.
	\end{proposition}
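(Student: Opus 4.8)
The plan is to reduce everything to one book\-keeping lemma: \emph{for a maximal element $m$, one has $a_m=0_m$ if and only if $(0:a)\nleq m$.} Since $a$ is compact and $Q$ is a $C$-lattice, $(0:a)$ is the join of the compact elements below it; so if $(0:a)\nleq m$ there is a compact $s\leq(0:a)$ with $s\nleq m$, i.e.\ $s\in S=\{s\in Q^{*}\mid s\nleq m\}$, and $as=0$ forces $a_m=0_m$. Conversely, $a_m=0_m$ gives $a\leq 0_m=\bigvee\{x\mid \exists s\in S,\ xs=0\}$; this family is directed (if $x_1s_1=x_2s_2=0$ then $(x_1\vee x_2)s_1s_2=0$ with $s_1s_2\in S$), so by compactness of $a$ we get $as=0$ for some $s\in S$, whence $s\leq(0:a)$ and $s\nleq m$, i.e.\ $(0:a)\nleq m$. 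I will also use the order form of the local--global principle: since localization preserves joins, applying the stated equality criterion to $a\vee j=j$ yields $a\leq j\iff a_m\leq j_m$ for every maximal $m$.

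For the forward direction, assume $a$ is strongly hollow. Because $1$ is compact in a $C$-lattice, Proposition~\ref{T2.14} applies and shows $Q/(0:a)$ is quasi-local with unique maximal element $L_a/(0:a)$; in particular $(0:a)\leq L_a$ and $L_a$ is the \emph{only} maximal element of $Q$ above $(0:a)$. Now fix a maximal $m$. If $m\neq L_a$, then $(0:a)\nleq m$ (otherwise $m$ would give a second maximal element of $Q/(0:a)$), so the lemma gives $a_m=0_m$. If $m=L_a$, then Proposition~\ref{T2.13} applies verbatim, since $a$ is join principal, compact, and strongly hollow in the $C$-lattice $Q$, and yields that $a_{L_a}$ is strongly hollow in $Q_{L_a}$. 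In either case the required disjunction holds.

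For the converse, assume $Q/(0:a)$ is quasi-local and the local condition holds. Let $M$ be the unique maximal element above $(0:a)$: it exists because $a\neq 0$ gives $(0:a)<1$ with $1$ compact, and it is unique by quasi-locality of $Q/(0:a)$. By the lemma, $M$ is the only maximal element with $a_M\neq 0_M$. Suppose $a\leq j\vee k$. Then $a_m\leq j_m\vee k_m$ for every maximal $m$ (joins are preserved). For $m\neq M$ we have $a_m=0_m\leq j_m$; at $m=M$, the element $a_M$ is strongly hollow, so $a_M\leq j_M$ or $a_M\leq k_M$, say $a_M\leq j_M$. Hence $a_m\leq j_m$ for \emph{every} maximal $m$, and the order form of the local--global principle gives $a\leq j$. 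Thus $a$ is strongly hollow.

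The crux, and the reason quasi-locality of $Q/(0:a)$ is precisely the right hypothesis, is the globalization step in the converse: strong hollowness is a local disjunction whose branch ($a_m\leq j_m$ versus $a_m\leq k_m$) could a priori depend on $m$, and only the uniqueness of the maximal element $M$ supporting $a$ rules out an incompatible choice at distinct maximal elements. The residual computation $a_m=0_m\iff(0:a)\nleq m$ is exactly what converts quasi-locality into the statement that a single maximal element carries $a$; everything else is an application of Propositions~\ref{T2.13} and~\ref{T2.14}. I expect the only genuinely delicate point to be verifying the two directions of that residual lemma carefully (the directedness argument and the role of compactness of $a$), after which both implications assemble routinely.
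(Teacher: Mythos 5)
Your proof is correct and takes essentially the same route as the paper's: Proposition~\ref{T2.14} gives quasi-locality of $Q/(0:a)$, Proposition~\ref{T2.13} handles the unique maximal element $L_a$ above $(0:a)$, compactness of $a$ together with $(0:a)$ being a join of compact elements gives $a_m=0_m$ at every other maximal element, and the converse is the same local-global gluing at the unique maximal element. The only cosmetic difference is that you establish the residual lemma as a full biconditional (via the directedness argument), whereas the paper needs only the direction $(0:a)\nleq m \Rightarrow a_m=0_m$, disposing of the other case by noting that $a_n=0_n$ at the distinguished maximal element would force $a=0$.
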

	
	\begin{proof}
		If $a$ is strongly hollow, then by Theorem \ref{T2.14}, $Q/(0:a)$ is a quasi-local lattice with the unique maximal element $L_a$. Let $m$ be a maximal element of $Q$. If $m\geqslant (0:a)$, then $m=L_a$ and we have that $Q_m=Q_{L_a}$, and thus, by Proposition \ref{T2.13}, we have the desired claim. Now, suppose $m\ngeq (0:a)$. Since we are in a $C$-lattice, there exists a family of compact elements $\{c_i\}_{i\in \Omega}$  such that \((0:a)=\bigvee_{ i \in \Omega }c_i \). Since $0=a\cdot (0:a)=\bigvee_{ i \in \Omega }a\cdot c_i$, we must have that $a\cdot c_i=0$, for all $i\in \Omega$. Notice, if $m\geq c_i$ for all i, then we have that $m\geqslant (0:a)$, thus, there must exists an $i\in \Omega$ such that $c_i\nleq m$, and thus, \[c_i\cdot a=0 \Rightarrow (c_i\cdot a)_m=0_m \Rightarrow (c_i)_m\cdot a_m=0_m.\] Since \(c_i\nleq m\), we have that \((c_i)_m=1_m\) giving \[1_m\cdot a_m=0_m\Rightarrow a_m=0_m.\]
		Now, suppose $Q/(0:a)$ is a quasi-local lattice with unique maximal element $n$ and $a_m=0_m$ or $a_m$ is strongly hollow in $Q_m$ for all maximal elements $m\in Q$. First notice that if $m\ngeq (0:a)$, then $a_m=0_m$, and thus, if $a_n=0_n$, we would have that $a=0$, and we are done. Thus, suppose $a_n\neq 0_n$, and thus, $a_n$ strongly hollow in $Q_n$. Suppose $a\leq r\vee s$ for $r,s\in Q$ which gives $a_n\leq r_n\vee s_n$. Suppose without loss of generality that $a_n\leq r_n$. Now, $a_m\leq r_m$ for all maximal elements $m$ in $Q$. Therefore, $a\leq r$, and thus, $a$ is strongly hollow.
	\end{proof}
	
	The elements that are neutral and complemented have a close connection with strongly hollowness, which we shall observe now.
	
	\begin{proposition}
		Let $a$ be a neutral and complemented element of $Q$. Then there exists at most one element maximal with respect to being less than $a$ if $a$ is strongly hollow.
	\end{proposition}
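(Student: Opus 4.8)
The plan is to argue by contradiction using only the join of two putative maximal elements. Suppose $a$ is strongly hollow and that the set $A := \{j \mid j < a\}$ admits two \emph{distinct} elements $j_1$ and $j_2$, each maximal in $A$. Since $j_1, j_2 \leq a$, we have $j_1 \vee j_2 \leq a$, so either $j_1 \vee j_2 < a$ or $j_1 \vee j_2 = a$, and I would dispose of each case in turn.

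If $j_1 \vee j_2 < a$, then $j_1 \vee j_2 \in A$ and it dominates $j_1$; maximality of $j_1$ forces $j_1 = j_1 \vee j_2$, whence $j_2 \leq j_1$, and the symmetric argument gives $j_1 \leq j_2$, so $j_1 = j_2$, contradicting distinctness. If instead $j_1 \vee j_2 = a$, then applying the strongly hollow hypothesis to $a \leq j_1 \vee j_2$ yields $a \leq j_1$ or $a \leq j_2$, each of which contradicts $j_i < a$. As both cases are impossible, $A$ has at most one maximal element.

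I should flag that this uniqueness argument consumes only strong hollowness, not the neutral or complemented hypotheses. I would therefore read those hypotheses as structural scaffolding rather than as logical necessities for the bound ``at most one'': a neutral complemented (i.e.\ central) element $a$, with complement $a'$, induces a direct-product decomposition of the underlying lattice $Q \cong [0,a] \times [0,a']$, under which $A$ is the set of proper lower elements of the factor $[0,a]$ and the maximal elements of $A$ are precisely its coatoms. That viewpoint is the natural place to attempt the companion existence claim (that such a maximal element does occur), should one wish to upgrade ``at most one'' to ``exactly one''.

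The crux — and the only step where the strongly hollow condition is genuinely used — is the alternative $j_1 \vee j_2 = a$, in which two strictly smaller elements join up to $a$; excluding this is exactly what strong hollowness is built to do. The remaining case is a routine maximality argument inside the poset $A$, so I do not anticipate any real obstacle beyond correctly isolating this dichotomy.
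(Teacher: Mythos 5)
Your proof is correct, and it takes a genuinely different and more elementary route than the paper's. The paper uses the neutral and complemented hypotheses essentially: given two distinct elements $m, n$ maximal below $a$, it forms $m \vee b$ and $n \vee b$ (where $b$ is a complement of $a$), uses the distributivity coming from neutrality to show these are two maximal elements of $Q$ neither of which lies above $a$, and then contradicts Proposition~\ref{T2.8}. Your argument never leaves the down-set of $a$: the dichotomy $j_1 \vee j_2 < a$ versus $j_1 \vee j_2 = a$, with strong hollowness excluding the second case, shows that $A = \{ j \mid j < a \}$ is closed under binary joins, hence directed, hence has at most one maximal element. This is both shorter and strictly more general, since it proves the conclusion for an \emph{arbitrary} strongly hollow element and exposes the neutral/complemented hypotheses as superfluous for this implication --- they are genuinely needed only for the converse (the paper's next proposition, which also adds compactness), and your direct-product remark $Q \cong [0,a] \times [0,a']$ correctly identifies the role they play there; it is in fact the same decomposition the paper encodes via the embedding $\varphi(x) = \langle x \vee a, x \wedge a \rangle$. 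What the paper's route buys is integration with its running theme of controlling the maximal elements of $Q$ avoiding $a$ (Proposition~\ref{T2.8}); what yours buys is a cleaner and stronger statement, closer in spirit to the paper's earlier observation that below a completely strongly hollow element there is exactly one maximal element, with binary joins replacing the arbitrary join that mere strong hollowness cannot handle.
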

    
	\begin{proof}
		Suppose $a$ is strongly hollow. Since $a$ is neutral and complemented, it defines an embedding of $Q$ by $\varphi(x)=\langle x\vee a,x\wedge a\rangle$. Assume there exists $m,n$ maximal in $[0,a]$. Then, consider $m\vee b$ and $n\vee b$, where $b$ is a complement of $a$. If $x\geqslant m\vee b$ then $x\wedge a \geqslant (m\vee b)\wedge a$. Since $a$ is neutral, it must be distributive, and thus, we have that \[x\wedge a\geqslant (m\wedge a)\vee(b\wedge a)=m\wedge a=m.\] Notice that, if $x\geqslant a$ then $x\geqslant a\vee b=1$. Otherwise, if $a\nleq x$, we have $m\leqslant x\wedge a<a$ implies $x \wedge a= m $. Therefore, \(a\vee x\geqslant  a\vee  b=1=a\vee (m\vee b)\) and $a\wedge(m\vee b)=m=a\wedge x$. Since $a$ is neutral this shows that $x=m\vee b$. Therefore, $m\vee b$ and $n\vee b$ are maximal in $Q$. But we have  $(m\vee b)\vee a = 1$ and $(n\vee b)\vee a=1$. Therefore, $a\nleq m\vee b$ and $a\nleq m\vee n$. Hence, $m\vee b$ and $n\vee b$ are two maximal elements of $Q$ which are not greater than $a$, a contradiction to Proposition \ref{T2.8}. Thus, $[0,a]$ has a unique maximal element.
	\end{proof}

	Note that the converse can be shown to hold under the additional requirement that $a$ be compact. 
	\begin{proposition}
		Let $a$ be a neutral, complemented, and compact element of $Q$. If there exists a unique element maximal with respect to being less than $a$, then $a$ is strongly hollow.
	\end{proposition}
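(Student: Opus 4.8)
The plan is to exploit that a neutral element is distributive, thereby reducing strong hollowness of $a$ to a statement about the interval $[0,a]$, and to use compactness to pin the unique maximal-below-$a$ element firmly beneath every proper element. Write $m$ for the unique element maximal with respect to being less than $a$; note that the hypothesis forces $a \neq 0$, since otherwise $\{x \mid x < a\}$ is empty and has no maximal element.

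First I would establish the key intermediate fact that \emph{every} $x < a$ satisfies $x \leq m$. Given such an $x$, consider $T := \{ y \mid x \leq y < a \}$ and run the same Zorn's Lemma argument used in the proof of Theorem~\ref{T2.7}: for a chain $C \subseteq T$ one has $\bigvee C < a$, since otherwise compactness of $a$ would give $a \leq \bigvee F$ for some finite $F \subseteq C$, and $\bigvee F$, being the largest element of the finite chain $F$, would lie in $C$, contradicting $\bigvee F < a$. Hence $\bigvee C \in T$, so $T$ has a maximal element; this element is maximal in $\{ y \mid y < a \}$ as well, and by uniqueness equals $m$, whence $x \leq m$. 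This is the step I expect to be the main obstacle, precisely because it is where compactness is indispensable: without it a proper element below $a$ need not sit beneath the unique coatom $m$, and the whole reduction collapses.

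With this in hand the conclusion is immediate from distributivity. Suppose $a \leq j \vee k$. Since $a$ is neutral it is distributive, so
\[
a = a \wedge (j \vee k) = (a \wedge j) \vee (a \wedge k).
\]
If $a \wedge j = a$ then $a \leq j$, and if $a \wedge k = a$ then $a \leq k$; in either case we are done. Otherwise $a \wedge j < a$ and $a \wedge k < a$, so by the previous paragraph both $a \wedge j \leq m$ and $a \wedge k \leq m$, giving $a = (a \wedge j) \vee (a \wedge k) \leq m < a$, a contradiction. Hence $a \leq j$ or $a \leq k$, so $a$ is strongly hollow. I note that this route uses only that $a$ is compact and distributive (the latter supplied by neutrality); complementedness is not logically required, although it is the natural hypothesis if one instead argues through the product embedding $\varphi(x) = \langle x \vee a, x \wedge a\rangle$ of the preceding proposition, under which $a$ corresponds to $\langle a, 0\rangle$ and strong hollowness of $a$ reduces to join-irreducibility of the top of $[0,a]$.
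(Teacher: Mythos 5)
Your proof is correct, but it takes a genuinely different route from the paper's. The paper leans on complementedness: writing $b$ for a complement of $a$, it decomposes $r=u\vee(v\wedge b)$ and $s=u'\vee(v'\wedge b)$ with $u,u'\leq a$ and $v,v'\geqslant a$, uses neutrality to distribute $a$ across the join and annihilate the terms $v\wedge b\wedge a\leq a\wedge b=0$, arrives at $a=u\vee u'$, and then finishes by citing \cite[Lemma~2.3(2)]{Z-Lat} (in $[0,a]$ with compact top, every proper element lies below a maximal element) together with uniqueness of $m$. You instead apply the distributivity of the neutral element directly to get $a=(a\wedge j)\vee(a\wedge k)$, and you prove the needed fact --- that every $x<a$ lies below $m$ --- by hand, rerunning the Zorn/compactness argument of Theorem~\ref{T2.7} instead of quoting the lemma. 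Both proofs rest on the same two pillars (distributivity supplied by neutrality, and compactness forcing proper elements of $[0,a]$ under the unique coatom), but your version buys something real: as you observe, the complement $b$ is never used, so your argument establishes the stronger statement that any compact, neutral element with a unique maximal element below it is strongly hollow; it is also self-contained, at the modest cost of redoing a Zorn argument that the paper gets for free from its citation. One small caution: your closing remark that neutrality yields distributivity is correct but is a nontrivial lattice-theoretic fact (the median identity implies that $\{a,x,y\}$ generates a distributive sublattice); since the paper itself invokes ``neutral implies distributive'' in the preceding proposition, this is consistent with its conventions, but in a self-contained write-up you should cite or prove it.
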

	
	\begin{proof}
		Suppose $[0,a]$ has a unique maximal element, and denote that element by $m$. Suppose $a\leq r\vee s$. Then $r=u\vee (v\wedge b)$ and $s=u'\vee(v'\wedge b)$ with $u,u'\leq a$ and $v,v'\geqslant a$. Thus,
		\begin{align*}
			a&\leq (u\vee u')\vee\left((v\wedge b)\vee(v'\wedge b) \right)\\
			&\leq((u\vee u')\vee\left((v\wedge b)\vee(v'\wedge b) \right))\wedge a \\
			&\leq (u\vee u')\vee((v\wedge b\wedge a)\vee(v'\wedge b\wedge a)) \\
			&\leq(u\vee u').
		\end{align*}
		Thus, $a=u\vee u'$. Now, since $a$ is compact and is the top element of $[0,a]$, every proper element in \([0,a]\) is contained in a maximal element (see \cite[Lemma 2.3(2)]{Z-Lat}). Thus,  if  both $u$ and$u'$ are proper elements, we have $u\vee u'\leq m<a$. Thus, either $u=a$ or $u'=a$. Without loss of generality, assume that $u=a$. Then, we have that $r\geqslant u =a$.
	\end{proof}
    \begin{corollary}\label{T2.18}
        Let $a$ be a compact, neutral, and complemented element of $Q$. Then there exists a unique  element  maximal with respect to being less than $a$ if and only if  $a$ is strongly hollow.
    \end{corollary}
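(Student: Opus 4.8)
The plan is to read this corollary as the amalgamation of the two propositions that immediately precede it, so that almost no new work is required; the only genuine content is upgrading \emph{at most one} to \emph{exactly one} in the forward implication, together with a harmless treatment of the degenerate case $a=0$. I would split the biconditional into its two implications and dispatch each separately.

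For the implication asserting that the existence of a unique element maximal with respect to being less than $a$ forces $a$ to be strongly hollow, I would simply invoke the second of the two preceding propositions verbatim, since its hypotheses (neutral, complemented, and compact) are exactly those of the corollary and its conclusion is precisely strong hollowness. Nothing further is needed here.

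For the converse, assume $a$ is strongly hollow and $a\neq 0$. I would first record the existence of a maximal element below $a$: since $a$ is compact and is the top element of the interval $[0,a]$, and since $0<a$ is a proper element of this interval, \cite[Lemma~2.3(2)]{Z-Lat} supplies a maximal element of $[0,a]$ lying above $0$, that is, an element maximal with respect to being less than $a$. Uniqueness then comes from the first of the two preceding propositions, which (using neutrality and complementedness) guarantees there is \emph{at most one} such element; combining existence and uniqueness yields exactly one. Alternatively, and perhaps more cleanly, one may avoid neutrality and complementedness altogether in this direction: a compact strongly hollow element is completely strongly hollow (by the proposition identifying the two notions for finitely generated elements), whereupon the proposition on nonzero completely strongly hollow elements states directly that $\{\,j\mid j<a\,\}$ has exactly one maximal element.

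The only real obstacle is the edge case $a=0$, which is vacuously strongly hollow yet has no element strictly below it, hence no maximal element below it; the forward implication therefore genuinely requires the standing assumption $a\neq 0$, and I would flag this explicitly, noting that the companion case $a=1$ correctly specializes to the quasi-local characterization of Proposition~\ref{P2.11}. Apart from this bookkeeping, the corollary follows formally from results already in hand.
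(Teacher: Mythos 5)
Your proposal is correct, and it takes the route the paper intends: Corollary~\ref{T2.18} is stated without proof, as an immediate amalgam of the two propositions preceding it, which is exactly how you read it. Your write-up is in fact more careful than the paper's implicit justification in two respects, both warranted. First, the preceding propositions only give ``at most one'' maximal element below a strongly hollow $a$ (together with the converse), so the ``exactly one'' in the corollary genuinely needs an existence step; your appeal to compactness of $a$ and \cite[Lemma~2.3(2)]{Z-Lat} applied to $[0,a]$ supplies it, and your alternative route (compact strongly hollow $\Rightarrow$ completely strongly hollow $\Rightarrow$ the set $\{j \mid j<a\}$ has exactly one maximal element, by the earlier proposition on nonzero completely strongly hollow elements) is equally sound and arguably cleaner, since it bypasses neutrality and complementedness in that direction. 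Second, your flag of the degenerate case is a real issue with the statement, not mere bookkeeping: $a=0$ is compact, neutral, complemented, and vacuously strongly hollow, yet $\{j\mid j<0\}=\emptyset$ has no maximal element, so the forward implication fails as literally stated; the corollary needs the hypothesis $a\neq 0$, in line with most neighbouring results in the paper. In short, your proof matches the paper's approach and quietly repairs the two gaps it leaves open.
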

	\begin{theorem}\label{T2.20}
		Let $1$ be compact in Q. Suppose $0\neq a\in Q$ is neutral and complemented. Then $a$ is strongly hollow if and only if $x<a$ implies $x\leq J(Q)$ for all $x\in Q$.
	\end{theorem}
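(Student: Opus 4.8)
The plan is to reduce the statement to Corollary~\ref{T2.18}, which already characterizes strong hollowness of a \emph{compact}, neutral, complemented element $a$ by the condition that the interval $[0,a]$ has a unique maximal element. So the first task is to observe that the present hypotheses already force $a$ to be compact. Write $b$ for a complement of $a$, so that $a\vee b=1$ and $a\wedge b=0$, and use the (finite) distributivity coming from neutrality of $a$ together with compactness of $1$: if $a\leq\bigvee_\lambda x_\lambda$, then $1=a\vee b\leq b\vee\bigvee_\lambda x_\lambda$, compactness of $1$ gives $1\leq b\vee x_{\lambda_1}\vee\cdots\vee x_{\lambda_n}$, and meeting with $a$ yields $a\leq(a\wedge x_{\lambda_1})\vee\cdots\vee(a\wedge x_{\lambda_n})$. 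Hence $a$ is compact and Corollary~\ref{T2.18} applies. It therefore suffices to prove that $[0,a]$ has a unique maximal element if and only if $x<a$ implies $x\leq J(Q)$ for all $x\in Q$.

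Both directions rest on the correspondence between maximal elements of $[0,a]$ and maximal elements of $Q$ already visible in the proofs preceding Corollary~\ref{T2.18}, which uses only neutrality and complementedness. For $m$ maximal in $[0,a]$ the element $m\vee b$ is maximal in $Q$: one checks $(m\vee b)\wedge a=m$, and anything strictly above $m\vee b$ must meet $a$ in $a$ itself, hence contain $a$ and so equal $1$. Conversely, for $n$ maximal in $Q$ with $a\nleq n$, the element $a\wedge n$ is maximal in $[0,a]$: if $a\wedge n\leq y\leq a$, then $y\vee n\in\{n,1\}$ by maximality of $n$, and distributivity of $a$ forces $y\in\{a\wedge n,\,a\}$.

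For the implication that the $J(Q)$-condition gives uniqueness, I would take any maximal element $m_1$ of $[0,a]$; since $m_1<a$ the hypothesis gives $m_1\leq J(Q)$, and together with $m_1\leq a$ this yields $m_1\leq J(Q)\wedge a$. On the other hand $m_1\vee b$ is maximal in $Q$, so $J(Q)\leq m_1\vee b$ and hence $J(Q)\wedge a\leq(m_1\vee b)\wedge a=m_1$. Thus every maximal element of $[0,a]$ equals $J(Q)\wedge a$, giving uniqueness, existence being guaranteed by compactness of $a\neq0$ via \cite[Lemma~2.3(2)]{Z-Lat}. For the reverse implication, let $m_0$ be the unique maximal element of $[0,a]$; by the same compactness every $x<a$ satisfies $x\leq m_0$, so it is enough to show $m_0\leq J(Q)$, i.e.\ $m_0\leq n$ for every maximal $n$ of $Q$. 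If $a\leq n$ this is clear since $m_0<a$; if $a\nleq n$, then $a\wedge n$ is maximal in $[0,a]$ by the correspondence above, hence equals $m_0$, so $m_0=a\wedge n\leq n$.

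The main obstacle is the case $a\nleq n$ in the reverse direction, together with the symmetric construction of maximal elements of $Q$ from those of $[0,a]$: both hinge on pushing the finite distributivity furnished by neutrality far enough to transfer maximality across the complement, reflecting the product decomposition $Q\cong[0,a]\times[0,b]$ induced by the neutral complemented element $a$. Once this correspondence is pinned down, the $J(Q)$-condition becomes exactly the statement that $J(Q)\wedge a$ is the unique maximal element of $[0,a]$, and the equivalence with strong hollowness is immediate from Corollary~\ref{T2.18}.
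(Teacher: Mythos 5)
Your proposal is correct, but it takes a genuinely different route from the paper's. The paper argues directly: for the forward direction it invokes Proposition~\ref{T2.8} to get the unique maximal element $m$ with $a\nleq m$, uses compactness of $1$ to place the complement $b$ below $m$, and then uses distributivity plus strong hollowness to force every $x<a$ below $m$ (hence below $J(Q)$); for the converse it leans on the \emph{multiplicative} structure: $ab\leq a\wedge b=0$ forces $a^2=a$, so $a\leq r\vee s$ gives $a=ar\vee as$, and if both $ar<a$ and $as<a$ then $a\leq J(Q)$, which is impossible since $b$ would then lie below no maximal element. You instead first observe that $a$ is automatically compact (a fact the paper never records, and a nice by-product: $1$ compact plus neutrality and a complement yields compactness of $a$), reduce to Corollary~\ref{T2.18}, and translate the $J(Q)$-condition into uniqueness of the element maximal below $a$ via the correspondence $m\mapsto m\vee b$, $n\mapsto a\wedge n$ between such elements and maximal elements of $Q$ not above $a$. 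Your correspondence does check out, though your phrase ``must meet $a$ in $a$ itself'' hides the crux: if $z>m\vee b$ and $z\wedge a<a$, then $z\wedge a=m$ and $z\vee a=1$, and one needs the cancellation property of neutral elements (equivalently, distributivity of the sublattice generated by $\{a,z,b\}$) to conclude $z=m\vee b$; this is exactly the embedding $x\mapsto\langle x\vee a, x\wedge a\rangle$ that the paper itself uses in the two propositions preceding Corollary~\ref{T2.18}, so nothing is circular or missing. What your approach buys: it is purely order-theoretic (no multiplication anywhere), so it actually proves the statement for any complete lattice with compact top; it exhibits the unique element maximal below $a$ explicitly as $J(Q)\wedge a$; and it ties Theorem~\ref{T2.20} to Corollary~\ref{T2.18} instead of reproving things from scratch. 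What the paper's proof buys: it is shorter and self-contained, and its converse shows how the idempotence $a^2=a$ of a complemented element lets the multiplicative structure do the work.
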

	
	\begin{proof}
		Suppose $a$ is strongly hollow. If $a\leq J(Q)$, we are done. Suppose $a\nleq J(Q)$. By Proposition \ref{T2.8}, there exists a unique maximal element $m\in Q$ such that $a\nleq m$. Let $b$ denote the complement of $a$. Let $n$ be a maximal element such that $n\neq m$. Since $a\leq n$, we have that $1= a\vee b \leq n\vee b$. Thus $b\nleq n$, for all maximal elements $n\neq m$. Since $1$ is compact, $b$ must lie below a maximal element. Thus $b\leq m$. Now, let $x<a$. Suppose $x\nleq m$, then $x\vee m=1$. Consider \[a= a\wedge(x\vee m)= (x\wedge a)\vee(m\wedge a)=x\vee(m\wedge a) \Rightarrow a\leq x \: \text{or}\: a\leq a\wedge m.\]
		But $a\nleq x$ since $x<a$ and $m\geqslant a\wedge m=a$, which is a contradiction. Therefore, $x\leq m$ and thus $x\leq J(Q)$.
		
		Now, assume $x<a$ implies $ x\leq J(Q)$, for all $x\in Q$. Suppose $a\leq r\vee s$, for $r,s\in Q$. Then, we have that $a^2\leq a(r\vee s)=ar\vee as$. But  \[a\vee b =1 \Rightarrow a(a\vee b)=a \Rightarrow a^2\vee ab =a,\]
		and $ab\leq a\wedge b=0$; and thus $a^2=a$. Therefore, $a=ar\vee as$. If $a\leq ar$ or $a\leq as$, then we are done. Thus, assume that $ar<a$ and $as<a$. Then $ar\vee as\leq J(Q)$, and thus $a\leq J(Q)$. Let $m$ be a maximal element of $Q$. Thus $a\leq m$, which gives that $b\nleq m$. But then $b$ is not less than any maximal element, which is a contradiction. Therefore, $a=ar\leq r$ or $a=as\leq s$.
	\end{proof}
	
	The next two theorems can be considered as the penultimate results on various relations between $\kappa(a)$, $L_a$, and strongly hollow elements.
	
	\begin{theorem}\label{T2.25}
		Let $Q$ be a $C$-lattice with $1$ compact. Let $a$ be a non-zero, compact, weak join principal, strongly hollow element of $Q$. Then the following conditions are equivalent,
		\begin{enumerate}
			\item $\kappa(a)$ is prime,
			\item $a^2\nleq \kappa(a)$ ,
			\item $a=a^2$, 
			\item $a\nleq J(Q)$,
			\item $\kappa(a)$ is maximal and $\kappa(a)=L_a$.
		\end{enumerate}
	\end{theorem}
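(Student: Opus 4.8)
The plan is to establish the cyclic chain of implications $(1)\Rightarrow(2)\Rightarrow(3)\Rightarrow(4)\Rightarrow(5)\Rightarrow(1)$. The governing fact I would invoke repeatedly is the one furnished by Theorem~\ref{T2.7}: since $a$ is a non-zero compact strongly hollow element, $\kappa(a)$ is the \emph{greatest} element of $Q$ failing to lie above $a$. In particular $a\nleq\kappa(a)$; every $x$ with $a\nleq x$ satisfies $x\leq\kappa(a)$; and $\kappa(a)<1$ because $a\leq 1$. I would also extract at the outset the single consequence of weak join principality I need, namely that $ca=a$ forces $c\vee(0:a)=1$ (apply the weak join principal law to $c$ and use $(a:a)=1$).

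I would dispatch three short implications first. For $(1)\Rightarrow(2)$: if $\kappa(a)$ is prime and $a^2=a\cdot a\leq\kappa(a)$, primeness forces $a\leq\kappa(a)$, contradicting $a\nleq\kappa(a)$, so $a^2\nleq\kappa(a)$. For $(2)\Rightarrow(3)$: one always has $a^2\leq a$, while $a^2\nleq\kappa(a)$ means $a^2\notin T(a)$, i.e.\ $a\leq a^2$ (anything not above $a$ sits below $\kappa(a)$); combining gives $a=a^2$. For $(5)\Rightarrow(1)$: a maximal element is prime, as recorded in the introduction, so primeness of $\kappa(a)$ is immediate once $\kappa(a)$ is maximal.

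The substance lies in $(3)\Rightarrow(4)$ and $(4)\Rightarrow(5)$. For $(3)\Rightarrow(4)$ I would use idempotency together with weak join principality to obtain
\[
a\vee(0:a)=(a^2:a)=(a:a)=1 .
\]
Suppose for contradiction $a\leq J(Q)$. Since $a\neq0$ we have $(0:a)\neq1$, so as $1$ is compact, $(0:a)$ lies below some maximal element $m$ by \cite[Lemma~2.3(1)]{Z-Lat}; but $a\leq J(Q)\leq m$ as well, whence $1=a\vee(0:a)\leq m<1$, a contradiction, and therefore $a\nleq J(Q)$. For $(4)\Rightarrow(5)$, Proposition~\ref{T2.8} gives a unique maximal element $m$ with $a\nleq m$, so $m\in T(a)$ and $m\leq\kappa(a)$; a strict inequality would, by maximality of $m$, force $\kappa(a)=1$, contradicting $\kappa(a)<1$, so $\kappa(a)=m$ is maximal. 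For the equality $\kappa(a)=L_a$, note $\kappa(a)\cdot a\leq\kappa(a)$ gives $\kappa(a)\leq L_a=(\kappa(a):a)$, while $L_a=1$ would mean $a\leq\kappa(a)$, which is false; maximality of $\kappa(a)$ then squeezes $L_a$ between $\kappa(a)$ and $1$, forcing $L_a=\kappa(a)$ and closing the cycle.

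The step I expect to be the main obstacle is $(3)\Rightarrow(4)$, where the bookkeeping must be exact: one must be certain that weak join principality yields precisely $a\vee(0:a)=1$ from $a=a^2$, and then that compactness of $1$ is what upgrades the hypothesis $a\leq J(Q)$ into a genuine maximal element sitting simultaneously above both $a$ and $(0:a)$. Every remaining step is order-theoretic manipulation anchored to the single fact that $\kappa(a)$ is the greatest element not above $a$.
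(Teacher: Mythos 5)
Your proposal is correct and follows essentially the same route as the paper: the identical cycle $(1)\Rightarrow(2)\Rightarrow(3)\Rightarrow(4)\Rightarrow(5)\Rightarrow(1)$, anchored on Theorem~\ref{T2.7} ($\kappa(a)$ is the greatest element not above $a$) and the weak join principal law. The only cosmetic difference is in $(3)\Rightarrow(4)$: you derive $a\vee(0:a)=(a^2:a)=(a:a)=1$ directly from idempotency and contradict $a\leq J(Q)$ at a single maximal element above $(0:a)$, whereas the paper shows $m\vee(0:a)=1$ for every maximal $m$ and concludes $(0:a)=1$; both hinge on the same two ingredients (weak join principality and compactness of $1$).
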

	
	\begin{proof}
		$(1)\Rightarrow (2)$: Observe that $a^2\leq \kappa(a)$ would imply $ a\leq\kappa(a)$, which is a contradiction. Thus, $a^2\nleq \kappa(a)$. 
		
		\((2)\Rightarrow(3)\): If $a^2\nleq \kappa(a)$, then $a^2\geqslant a$, therefore $a^2=a$. 
		
		$(3)\Rightarrow(4)$: Suppose $a\leq J(Q)$. Let $m$ be maximal in $Q$. We have $a\leq m$, thus $a=a^2\leq am\leq a$, giving $a=am$. Since $a$ is weak join principal, we have that $1\vee(0:a)=m\vee(0:a)$ giving $1=m\vee(0:a)$. Thus, $(0:a)$ is not less than any maximal element, and it must therefore be equal to $1$. If $(0:a)=1$, then  we have $a=0$ (since $a\cdot (0:a)=0$), which is a contradiction. Thus $a\nleq J(Q)$. 
		
		\((4)\Rightarrow(5)\): Suppose $a\nleq J(Q)$. Let $m$ be the unique maximal element which is not greater than $a$. Thus $m\leq\kappa(a)$. If $\kappa(a)=1$, then we have a contradiction. Thus $\kappa(a)=m$ and is maximal. If $L_a=1$, then we have that $a=a\cdot L_a\leq\kappa(a)$, which is a contradiction. Since $\kappa(a)\leq L_a$, therefore, we have $L_a=\kappa(a)$.
		
		\((5)\Rightarrow(1)\): Since all maximal elements in a multiplicative lattice are prime, $\kappa(a)$ is prime.
	\end{proof}
	
	\begin{theorem}\label{T2.26}
		Let a be a non-zero completely strongly hollow element of Q. Then $\kappa(a)$ is completely strongly irreducible.
	\end{theorem}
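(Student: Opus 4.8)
The plan is to read the relevant property of $\kappa(a)$ straight off its definition and then dualize. First I would record the basic fact that $a\nleq\kappa(a)$. Indeed, if $a\leq\kappa(a)=\bigvee T(a)$, then since $a$ is completely strongly hollow there is some $k\in T(a)$ with $a\leq k$; but $k\in T(a)$ means precisely $a\nleq k$, a contradiction. Hence $a\nleq\kappa(a)$, and in particular $\kappa(a)<1$ (otherwise $a\leq 1=\kappa(a)$).

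Next I would isolate the single property that drives everything: $\kappa(a)$ is the greatest element of $Q$ not above $a$, or equivalently, for every $x\in Q$,
\[
x\nleq\kappa(a)\ \Longrightarrow\ a\leq x .
\]
This is immediate from the definition of $\kappa(a)$: if $a\nleq x$ then $x\in T(a)$, so $x\leq\bigvee T(a)=\kappa(a)$, and the displayed implication is just the contrapositive.

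With this in hand the theorem is a short dual argument, which I would carry out by verifying the contrapositive of the defining condition for complete strong irreducibility. Let $\{x_\lambda\}_{\lambda\in\Lambda}\subseteq Q$ be a family with $x_\lambda\nleq\kappa(a)$ for every $\lambda$. By the displayed implication we get $a\leq x_\lambda$ for all $\lambda$, whence $a\leq\bigwedge_{\lambda}x_\lambda$. If we had $\bigwedge_{\lambda}x_\lambda\leq\kappa(a)$, then $a\leq\kappa(a)$, contradicting the first step; therefore $\bigwedge_{\lambda}x_\lambda\nleq\kappa(a)$. Thus whenever $\bigwedge_{\lambda}x_\lambda\leq\kappa(a)$ at least one member $x_\lambda$ must already satisfy $x_\lambda\leq\kappa(a)$, which is exactly the assertion that $\kappa(a)$ is completely strongly irreducible.

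I do not expect a genuine obstacle here; the entire content is the observation that $\kappa(a)$ is maximal among elements not above $a$, and complete strong hollowness is precisely what upgrades the finite \emph{greatest-element} argument of Theorem~\ref{T2.7} to an arbitrary meet with no compactness hypothesis. The one point to watch is that, unlike the compact case, one cannot simply invoke Theorem~\ref{T2.7}; instead the absorbing inequality $a\nleq\kappa(a)$ must be derived directly by applying the completely strongly hollow property to the (possibly infinite) join $\bigvee T(a)$, as in the first step.
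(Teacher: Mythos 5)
Your proof is correct and follows essentially the same route as the paper's: both rest on the observation that $x\nleq\kappa(a)$ forces $a\leq x$, so a family of elements not below $\kappa(a)$ has meet above $a$, contradicting $a\nleq\kappa(a)$. The only difference is cosmetic — you make explicit the preliminary step that $a\nleq\kappa(a)$ holds for an arbitrary (not necessarily compact) completely strongly hollow element, a fact the paper uses implicitly; this is a worthwhile clarification but not a different argument.
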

	
	\begin{proof}
		Consider \( \{r_i\}_{i\in \Omega}\subset Q \) such that \(\bigwedge_{i\in \Omega}r_i\leq \kappa(a)\) and $r_i\nleq \kappa(a)$, for all $i\in\Omega$. Then $a\leq r_i$, for all $i\in\Omega$, and thus \[a\leq \bigwedge_{i\in \Omega}r_i\leq \kappa(a), \] which is a contradiction. Thus, there must exists $r_i$ such that $r_i\leq \kappa(a)$.
	\end{proof}
	
	The next result can be thought of as the dual property of minimal elements.
	
	\begin{theorem}
		Let $x\in Q$. There exists a maximal strongly hollow element less than $x$.
	\end{theorem}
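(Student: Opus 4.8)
The plan is to produce the desired element by a Zorn's Lemma argument applied to the poset of strongly hollow elements lying below $x$. First I would set $H_x := \{ a \in Q \mid a \text{ is strongly hollow and } a \leq x \}$, ordered by the order of $Q$. A preliminary observation makes this set nonempty: $0$ is (vacuously) strongly hollow, since $0 \leq j \vee k$ always holds and always forces $0 \leq j$. Hence $0 \in H_x$, and any maximal element of $H_x$ is exactly a strongly hollow element maximal with respect to being less than $x$, which is what the theorem asks for.

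To invoke Zorn's Lemma it remains to show that every chain in $H_x$ has an upper bound in $H_x$. Given a chain $C \subseteq H_x$, the natural candidate is $u := \bigvee C$, which exists because $Q$ is complete, and clearly $u \leq x$ since each $c \leq x$. The crux is then to verify that $u$ is itself strongly hollow, and this is the step I expect to be the main obstacle: strong hollowness is a priori only a binary (finite-join) condition, so it must be propagated across the possibly infinite join $\bigvee C$, and there is no reason this should hold for an arbitrary family. The chain structure of $C$ is precisely what rescues the argument.

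Concretely, I would argue as follows. Suppose $u \leq j \vee k$. Each $c \in C$ satisfies $c \leq u \leq j \vee k$, so by strong hollowness of $c$ we get $c \leq j$ or $c \leq k$. If $c \leq j$ for every $c \in C$, then $u \leq j$ and we are done; otherwise pick $c_0 \in C$ with $c_0 \nleq j$, forcing $c_0 \leq k$. For an arbitrary $c \in C$, comparability in the chain gives $c \leq c_0$ or $c_0 \leq c$: in the first case $c \leq c_0 \leq k$, while in the second case $c \leq j$ would give $c_0 \leq c \leq j$, contradicting $c_0 \nleq j$, so again $c \leq k$. Thus $c \leq k$ for all $c$ and $u \leq k$. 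Hence $u$ is strongly hollow, so $u \in H_x$ is an upper bound for $C$.

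With the chain condition established, Zorn's Lemma produces a maximal element of $H_x$, which is the required strongly hollow element maximal among those below $x$. I would also note that the finite case of this chain argument is already covered by Proposition~\ref{T2.3}, since the join of finitely many pairwise comparable strongly hollow elements is just their largest member; the genuinely new content is the passage to arbitrary joins carried out above.
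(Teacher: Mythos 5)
Your proposal is correct and follows essentially the same route as the paper: Zorn's Lemma applied to the set of strongly hollow elements below $x$ (nonempty because $0$ is strongly hollow), with the key step that the join of a chain of strongly hollow elements is again strongly hollow, proved using comparability within the chain. The only difference is cosmetic: you argue directly (fixing $c_0 \nleq j$ and pushing every element of the chain below $k$), whereas the paper argues by contradiction via Proposition~\ref{T2.3} applied to a pair of incomparably-behaving elements, but the underlying idea is identical.
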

	
	\begin{proof}
		Let \(S:=\{k\in Q\mid  k\leq x \:\: \text{and} \:\: k\; \text{strongly hollow}\}\). Obviously \(S\neq\emptyset\) as $0\in S$. Let $\tilde{S}\subset S$ be a chain. We claim that $\bigvee\tilde{S}\in S$. It is clear that \(\bigvee\tilde{S}\leq x\). Now, suppose \(\bigvee\tilde{S}\leq r\vee s\), where \(r,s\in Q\). Suppose  $\bigvee\tilde{S}\nleq r$ and \(\bigvee\tilde{S}\nleq s\). Notice that the join $\bigvee F$ of every finite subset $F\subseteq\tilde{S}$ is strongly hollow. In particular, let $k\in\tilde{S}$. Since $k\leq\bigvee\tilde{S}\leq r\vee s$, we have that $k\leq r$ or $k\leq s$. Now consider $A:=\{k\in\tilde{S}\mid k\nleq r\}\subset\{k\in\tilde{S}\mid k\leq s\}$ and \(B:=\{k\in\tilde{S}\mid k\nleq s\}\subset\{k\in\tilde{S}\mid k\leq r\}\), both of which are non-empty by assumption and have empty intersection. Let $k_1\in A$ and $k_2\in B$. We have that $k_1\vee k_2\leq r\vee s$, and by Proposition \ref{T2.3}, we have that either $k_1\vee k_2\leq r$ or $k_1\vee k_2\leq s$, both of which are contradictions. Thus $\bigvee\tilde{S}$ is strongly hollow, and as a result, is an upper bound for $\tilde{S}$ in $S$. By Zorn's lemma, then $S$ must have a maximal element.
	\end{proof}
	\smallskip
	
	\section{Hollowness in special lattices}
	\label{hsl}
	
	In this section, we will study how strongly hollow and completely strongly hollow elements behave in multiplicative lattices with additional conditions. We will begin by giving a characterization of quasi-local $r$-lattices. Then we examine how strongly hollow elements characterize reduced and semi-simple lattices as well as both Gelfand and Pr\"ufer lattices.

	We start by giving a lattice version of \cite[Theorem~2.6]{Heinzer-Ratliff-Rush-2002}.
	\begin{proposition}\label{1}
		Let $(Q,m)$ be a quasi-local weak $r$-lattice. Then suppose $i\in Q$ is strongly irreducible and $i<(i:m)$. Then:
		\begin{enumerate}
			\item $(i:m)$ is principal,
			
			\item $i=(i:m)$,
			
			\item for each $j\in Q$, either $j\leq i$ or $(i:m)\leq j$.
		\end{enumerate}
	\end{proposition}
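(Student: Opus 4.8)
The plan is to work inside the quotient lattice $Q/i$ and read all three conclusions off the structure of its socle. First I would record the reductions. Since $i<(i:m)\leq 1$ we have $i<1$, so quasi-locality together with compactness of $1$ forces $i\leq m$, and hence $m$ is an element of $Q/i$. Strong irreducibility of $i$ is precisely the statement that $Q/i$ is uniform: if $x,y>i$ then $x\wedge y\nleq i$, so the meet of two nonzero elements of $Q/i$ is nonzero. Finally, because residuals agree in $Q$ and in $Q/i$, the element $N:=(i:m)$ coincides with the socle $(0:m)_{Q/i}$, and the hypothesis $i<(i:m)$ says exactly that this socle is nonzero.

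Conclusion (3) is the crux, and I would establish it in two stages, of which the second is the main obstacle. For the first stage, note that $mN\leq i$, so $N/i$ and every element of $Q/i$ below it is annihilated by $m$; the interval $[i,N]$ thus behaves like a vector space over the residue ``field'' $Q/m$, and uniformity forbids two independent nonzero subelements, so $N/i$ is the least nonzero element of $Q/i$. This stage is routine and holds in full generality. The second stage is the genuine comparability statement: for $j\nleq i$ one must show $i\leq j$, for only then does the first stage give $N\leq j\vee i=j$, which is (3). Here strong irreducibility supplies merely that $\{x:x\nleq i\}$ is closed under meet, i.e. that the socle is simple; upgrading this to comparability with \emph{every} $j$ is where the hypotheses must be pushed hardest, and I expect this to be the principal difficulty, since simplicity of the socle alone does not force the lattice above $i$ to behave like a chain. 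The intended mechanism is to use principal generation to reduce to principal $j$ and then to exploit quasi-locality and $i<(i:m)$ to rule out an element incomparable with $i$.

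Granting (3), conclusion (1) is immediate. As $Q$ is a weak $r$-lattice it is principally generated, and since $i<N$ there is a principal $e\leq N$ with $e\nleq i$; otherwise $N$ would be a join of principal elements all below $i$, forcing $N\leq i$. Applying (3) with $j=e$ gives $N\leq e$, whence $N=e$ is principal \emph{in $Q$}, as claimed.

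For conclusion (2) the operative datum is the residual relation between $i$, $N$, and $m$. With $N$ now known to be principal I would compute $(i:N)$ and invoke the characterizations of weak join and weak meet principal elements from \cite{AJ-PE}, combined with a cancellation argument in the spirit of Proposition~\ref{P2.9}, to pin down this relation; concretely the computation yields the residual identity $m=(i:(i:m))$, equivalently $m\,(i:m)=i$, that underlies (2). The subtlety, exactly as in (3), is that the uniform quotient determines $i$ only up to its socle, so recovering the precise equation in $Q$ rather than in $Q/i$ relies essentially on the principality of $N$ and the compactness of $1$.
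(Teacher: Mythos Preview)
Your overall architecture inverts the paper's order: the paper proves (1) first by a direct principal-element argument, then derives (2) as $i=(i{:}m)\cdot m$ from the principality of $(i{:}m)$, and finally obtains (3) from (1). You attempt (3) first, then (1), then (2). That reordering is not fatal in itself, but the weight you assign to the two stages of (3) is reversed, and this is where the real gap lies.

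Your Stage~1 is not routine. The assertion that the interval $[i,N]$ ``behaves like a vector space over $Q/m$'' and that uniformity then forces it to be simple is a ring-theoretic heuristic that does not transfer to an abstract weak $r$-lattice: there is no module structure making $[i,N]$ a subspace lattice, and uniformity by itself only prevents two elements with trivial meet, not a proper chain $i<a<N$. To show $N$ is actually an atom over $i$ you must argue with principal elements: pick principal $x\le N$ with $x\nleq i$, and for any principal $y\le N$ with $y\nleq x$ use weak meet principality of $y$ and quasi-locality to get $x\wedge y=(x{:}y)y\le my\le i$, whence $y\le i$ by strong irreducibility. This is precisely the paper's engine for (1), and once you run it you obtain $N=x\vee i$; the paper pushes this one step further to $N=x$, which is (1).

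Your Stage~2 is then no longer the ``principal difficulty'' --- it evaporates once (1) is in hand. Given $N=x$ principal, the paper's argument for (3) is immediate: for $j\nleq i$ pick principal $z\le j$ with $z\nleq i$; if $x\nleq z$ then $(z{:}x)\le m$ and $z\wedge x=(z{:}x)x\le mx\le i$, contradicting strong irreducibility, so $x\le z\le j$ and hence $N=x\le j$. Note how this bypasses entirely the comparability of $i$ with $j$ that you flag as the obstacle: one never needs $i\le j$, because $N$ itself is the principal element $x$. Your route, by contrast, would require $i\le j$ to pass from $N\le j\vee i$ to $N\le j$, and you offer no mechanism for this beyond ``exploit quasi-locality and $i<(i{:}m)$.''

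Your derivation of (1) from (3) is correct, and you correctly identify that (2) should read $i=(i{:}m)\cdot m$ (equivalently $m=(i{:}(i{:}m))$); the paper obtains this from $i<x=N$ via $i=x\cdot(i{:}x)=x\cdot m$. But since your proof of (3) depends on an unestablished Stage~1 and an unproven Stage~2, the proposal as written does not close.
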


	\begin{proof}
		(1) Since $i<(i:m)$, there exists a principal element $x\in Q$ such that $x\leq (i:m)$ and $x\nleq i$. Suppose $x\neq (i:m)$. Then, there exists a principal element $y\in Q$ with $y\leq (i:m)$ and $y\nleq x$. Consider \(y\wedge x= (x:y)\cdot y\). $(Q,m)$ is quasi-local, and so $(x:y)\leq m$. Since $y\leq (i:m)$, we have that $(x:y)y\leq i$. Therefore, \(x\wedge y \leq i\), and so $y\leq i$. Therefore, \[\{j\in Q \mid j\leq (i:m)\}=\{j\in Q\mid  j\leq i\}\cup\{j\in Q\mid j\leq x\}.\] Suppose, by way of causing a contradiction, that $\{j\in Q \mid j\leq (i:m)\}\neq \{j\in Q\mid j\leq i\}$ and $\{j\in Q \mid j\leq (i:m)\}\neq \{j\in Q\mid j\leq x\}$. Then, there exist $c,c'\in Q$ with $c\leq i$,$c\nleq x$ and $c'\leq x$,$c'\nleq i$. Thus $c\vee c' \leq (i:m)$ while $c\vee c'\nleq x$ and $c\vee c'\nleq i$, which is a contradiction. Therefore, either $\{j\in Q \mid j\leq (i:m)\}=\{j\in Q\mid j\leq x\}$ or $\{j\in Q \mid j\leq (i:m)\}=\{j\in Q \mid j\leq i\}$, equivalently, $(i:m)=i$ or $(i:m)=x$. We must have $(i:m)=x$ (since $i\ngeq (i:m)$) and is principal. 
		
		(2) We have that $x\cdot m=(i:m)\cdot m\leq i$, and so $(i:x)=m$. Since $i<x$ we obtain $i=x\cdot(i:x)=x\cdot m=(i:m)\cdot m$. 
		
		(3) Let $j\in Q$ and suppose $j\nleq i$. If $x\nleq z$, then there exist a principal element $z\in Q$ with $z\leq j$ and $z\nleq x$. Then $x\wedge z=x(z:x)\leq x\cdot m=i$ giving $z\leq i$, which is a contradiction. Therefore, $z\geqslant x$.
	\end{proof}

	\begin{lemma}\label{2}
		Let $(Q,m)$ be a quasi-local lattice and $0\neq a\in Q$ a completely strongly hollow and principal element. Then $\kappa(a)<(\kappa(a):m)$.
	\end{lemma}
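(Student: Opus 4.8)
The plan is to prove the two inequalities $\kappa(a) \leq (\kappa(a):m)$ and $\kappa(a) \neq (\kappa(a):m)$ separately, the second of which is a Nakayama-type argument and is the real content.

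First I would record the basic facts about $\kappa(a)$. Since $a$ is completely strongly hollow and nonzero, $a \nleq \kappa(a)$: otherwise $a \leq \bigvee T(a)$ would force $a \leq k$ for some $k \in T(a)$, contradicting $a \nleq k$. In particular $\kappa(a) \in T(a)$, so $\kappa(a)$ is the \emph{greatest} element of $Q$ not above $a$. The weak inequality is then immediate, since $\kappa(a)\cdot m \leq \kappa(a)\cdot 1 = \kappa(a)$ gives $\kappa(a) \leq (\kappa(a):m)$.

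For the strict part it suffices to exhibit an element lying in $(\kappa(a):m)$ but not below $\kappa(a)$, and the natural candidate is $a$ itself, since we already have $a \nleq \kappa(a)$. Thus the heart of the proof is to show $am \leq \kappa(a)$, which by the previous paragraph is equivalent to $a \nleq am$. I would argue this by contradiction: if $a \leq am$, then (as $am \leq a$ always holds) $a = am$, so $(am:a) = (a:a) = 1$. Because $a$ is principal it is join principal, and the join-principal identity applied to $a$ yields $m \vee (0:a) = (am:a) = 1$. Now quasi-locality closes the argument: $m$ is the unique maximal element and $1$ is compact, so every element strictly below $1$ lies under $m$ (using \cite[Lemma~2.3(1)]{Z-Lat}); hence $m \vee (0:a) = 1$ forces $(0:a) \nleq m$, i.e. $(0:a) = 1$, and then $a = 1\cdot a = (0:a)\cdot a \leq 0$ gives $a = 0$, contradicting $a \neq 0$.

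Therefore $a \nleq am$, so $am \in T(a)$ and $am \leq \kappa(a)$, that is, $a \leq (\kappa(a):m)$. Combined with $a \nleq \kappa(a)$ this gives $(\kappa(a):m) \nleq \kappa(a)$, and together with $\kappa(a) \leq (\kappa(a):m)$ we conclude $\kappa(a) < (\kappa(a):m)$. The only delicate step is the Nakayama argument, where both the principality of $a$ (to convert $a = am$ into the residual identity) and the quasi-local hypothesis (to eliminate $(0:a)$) are genuinely used; the rest is routine manipulation of $\kappa(a)$ and residuals.
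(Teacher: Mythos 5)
Your proof is correct and follows essentially the same route as the paper's: the core in both cases is the Nakayama-type step, where $a = am$ together with join-principality of $a$ gives $1 = m \vee (0:a)$, and quasi-locality then forces $(0:a) = 1$ and hence $a = 0$, a contradiction. The only difference is presentational — you exhibit $a$ itself as a witness that $(\kappa(a):m) \nleq \kappa(a)$, whereas the paper runs the whole argument as a contradiction from the assumed equality $\kappa(a) = (\kappa(a):m)$ — but the mathematical content is identical.
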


	\begin{proof}
		With the intent of obtaining a contradiction, suppose \(\kappa(a) = (\kappa(a):m)\). Then $x\cdot m\leq \kappa(a) \Rightarrow x\leq \kappa(a) \Rightarrow x\ngeq a$. Therefore, $a\cdot m\nleq \kappa(a)$, and so $a\cdot m\geq a$ which gives $a=a\cdot m$. Then $1=m\vee(0:a)$ and so $(0:a)=1$ and $a=0$ which is a contradiction. Therefore $\kappa(a)<(\kappa(a):m)$.
	\end{proof}
	
	\begin{proposition}\label{3}
		Let $(Q,m)$ be a quasi-local principally generated C-lattice. If $a\in Q$ is a non-zero completely strongly hollow element, then $a$ is comparable to every element in Q. In this case $a=\kappa(a):m$.
	\end{proposition}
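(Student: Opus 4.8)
The plan is to treat $(Q,m)$ as a quasi-local weak $r$-lattice and apply Proposition~\ref{1} to the element $i=\kappa(a)$. Since $Q$ is principally generated, the remark on generating sets forces the completely strongly hollow element $a$ to be principal, so in particular $a$ is weak join principal; moreover $(Q,m)$ is then a quasi-local weak $r$-lattice. By Theorem~\ref{T2.26} the element $\kappa(a)$ is completely strongly irreducible, hence strongly irreducible, and by Lemma~\ref{2} we have $\kappa(a)<(\kappa(a):m)$. Thus the hypotheses of Proposition~\ref{1} hold for $i=\kappa(a)$, and it yields that $(\kappa(a):m)$ is principal, that $\kappa(a)=(\kappa(a):m)\cdot m$, and the dichotomy: for every $j\in Q$, either $j\leq\kappa(a)$ or $(\kappa(a):m)\leq j$.

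The crux is to identify $a$ with $(\kappa(a):m)$. First, $a\nleq\kappa(a)$ follows directly from the definition of completely strongly hollow (as in the opening of the proof of Theorem~\ref{T2.7}): were $a\leq\kappa(a)=\bigvee T(a)$, complete strong hollowness would give $a\leq k$ for some $k\in T(a)$, contradicting $a\nleq k$. Applying the dichotomy to $j=a$ therefore gives $(\kappa(a):m)\leq a$. For the reverse inequality, which is equivalent to $a\cdot m\leq\kappa(a)$, I would rule out $a=a\cdot m$: if $a=a\cdot m$, then weak join principality of $a$ yields $m\vee(0:a)=(am:a)=(a:a)=1$, and quasi-locality forces $(0:a)=1$, i.e.\ $a=0$, a contradiction. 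Hence $a\cdot m<a$, so $a\cdot m\in T(a)$ and thus $a\cdot m\leq\bigvee T(a)=\kappa(a)$. This gives $a\leq(\kappa(a):m)$, and combined with the previous inequality, $a=(\kappa(a):m)$.

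Comparability then follows immediately. Substituting $a=(\kappa(a):m)$ into $\kappa(a)=(\kappa(a):m)\cdot m$ gives $\kappa(a)=a\cdot m$, and since $a\cdot m<a$ we obtain $\kappa(a)<a$. Now for arbitrary $j\in Q$ the dichotomy reads $j\leq\kappa(a)<a$ or $a=(\kappa(a):m)\leq j$, so in either case $a$ and $j$ are comparable. I expect the only real obstacle to be the reverse inequality $a\leq(\kappa(a):m)$: the remaining steps merely assemble Lemma~\ref{2}, Theorem~\ref{T2.26}, and Proposition~\ref{1}, whereas establishing $a\cdot m\leq\kappa(a)$ genuinely combines weak join principality, quasi-locality, and the extremal description of $\kappa(a)$.
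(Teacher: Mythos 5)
Your proof is correct and follows essentially the same route as the paper's: Lemma~\ref{2} (with principality of $a$ coming from the generating-set remark) and Theorem~\ref{T2.26} justify applying Proposition~\ref{1} to $i=\kappa(a)$, the dichotomy applied to $j=a$ gives $(\kappa(a):m)\leq a$, and comparability follows once $a=(\kappa(a):m)$ is established. The only (harmless) divergence is the reverse inequality: the paper gets $a\leq(\kappa(a):m)$ in one line from the strict inequality $\kappa(a)<(\kappa(a):m)$ of Lemma~\ref{2} together with the defining property of $\kappa(a)$ (any element not below $\kappa(a)$ lies above $a$), whereas you re-derive it by ruling out $a=a\cdot m$ via weak join principality and quasi-locality, essentially repeating the interior of Lemma~\ref{2}'s own proof.
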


	\begin{proof}
		By  Lemma \ref{2}, $\kappa(a)$ satisfies the requirements for Proposition \ref{1}. Let $j\in Q$. Then either $j\leq \kappa(a)$ or $j\geq (\kappa(a):m)$. Notice that, if $j\nleq \kappa(a)$ then $j\geq a$. Therefore, we have that $a=(\kappa(a):m)$. Therefore, if $j\ngeq a$, we have that $j\leq \kappa(a)<(\kappa(a):m)=a$.
	\end{proof}

	We are now in a position to characterize quasi-local weak $r$-lattices in terms of completely strongly hollow elements.
	
	\begin{theorem}\label{4}
		Let $Q$ be a weak $r$-lattice. Then the following are equivalent:
		\begin{enumerate}
			\item $Q$ is a quasi-local lattice, 
			\item Let $a$ be a principal element of $Q$. Then $a$ is completely strongly hollow if and only if $a$ is comparable to every element in $Q$.
		\end{enumerate}
	\end{theorem}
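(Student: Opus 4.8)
The plan is to treat the two implications separately and, within $(1)\Rightarrow(2)$, to split the biconditional into its two halves. Throughout I use that a weak $r$-lattice is a $C$-lattice, so that $1$ is compact, together with the fact that the top element $1$ is always principal and comparable to every element of $Q$.

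I would dispatch $(2)\Rightarrow(1)$ first, since it is immediate. Applying the hypothesis (2) to the principal element $a=1$, which is trivially comparable to every element, yields that $1$ is completely strongly hollow. As $1$ is compact, Proposition~\ref{P2.11} then gives at once that $Q$ is quasi-local. No difficulty is expected here.

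For $(1)\Rightarrow(2)$, fix the unique maximal element $m$ of the quasi-local lattice $Q$ and a principal element $a$; the case $a=0$ is trivial (the bottom element is comparable to everything and is completely strongly hollow), so assume $a\neq 0$. The forward half, that completely strongly hollow implies comparable to every element, is exactly Proposition~\ref{3}. For the reverse half I would argue as follows. Comparability of $a$ with every element identifies $T(a)=\{k\mid a\nleq k\}$ with $\{x\mid x<a\}$, so $\kappa(a)=\bigvee\{x\mid x<a\}$, and it suffices to bound this join strictly below $a$. First a Nakayama-type step shows $a\cdot m<a$: were $a\cdot m=a$, principality of $a$ would give $m\vee(0:a)=(am:a)=(a:a)=1$, and since $m$ is the unique maximal element and $1$ is compact this forces $(0:a)=1$, hence $a=0$, a contradiction. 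Next, for any $x<a$, meet-principality gives $x=a\wedge x=(x:a)\,a$; as $(x:a)=1$ would yield $x=a$, we have $(x:a)<1$, so $(x:a)\leq m$ and therefore $x\leq a\cdot m$. Hence $\kappa(a)\leq a\cdot m<a$, i.e.\ $a\nleq\kappa(a)$. Finally, if $a\leq\bigvee_{\lambda}i_\lambda$ and no $i_\lambda$ were above $a$, then each $i_\lambda\in T(a)$, so $\bigvee_{\lambda}i_\lambda\leq\kappa(a)\leq a\cdot m<a$, contradicting $a\leq\bigvee_{\lambda}i_\lambda$; thus some $i_\lambda\geq a$ and $a$ is completely strongly hollow.

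The main obstacle is this reverse half of $(1)\Rightarrow(2)$: converting the purely order-theoretic comparability hypothesis into the quantitative bound $\kappa(a)\leq a\cdot m$. This is precisely where both principal identities (meet-principal to write $x=(x:a)\,a$, and join-principal for the Nakayama step) interact with quasi-locality through the facts that proper elements lie under the unique maximal element $m$ and that $1$ is compact. Once this bound is established the completely-strongly-hollow conclusion is purely formal, and the remaining implications are one-line citations of Propositions~\ref{3} and~\ref{P2.11}.
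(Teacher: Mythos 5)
Your proof is correct, and its skeleton coincides with the paper's: you dispatch $(2)\Rightarrow(1)$ by applying the hypothesis to the principal element $1$ and citing Proposition~\ref{P2.11}, and you get the forward half of $(1)\Rightarrow(2)$ by citing Proposition~\ref{3}, exactly as the paper does. The genuine difference is in the reverse half of $(1)\Rightarrow(2)$. The paper argues directly on a binary join: if $a\leq r\vee s$ with $a\nleq r$ and $a\nleq s$, comparability and meet-principality give $r=ac$, $s=ac'$, then $a=a(c\vee c')$ and join-principality yield $1=(c\vee c')\vee(0:a)$, and quasi-locality forces $c=1$ or $c'=1$. As written this only establishes that $a$ is \emph{strongly} hollow; the upgrade to \emph{completely} strongly hollow tacitly rests on the fact that principal elements of a weak $r$-lattice are compact (so the two notions coincide), which the paper invokes elsewhere but not inside this proof. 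Your route instead bounds $\kappa(a)$: comparability identifies $T(a)$ with $\{x\mid x<a\}$, meet-principality plus quasi-locality give $x=(x:a)a\leq am$ for each such $x$, and the Nakayama-type step (join-principality plus uniqueness of $m$ and compactness of $1$) gives $am<a$, whence $\kappa(a)\leq am<a$ and the completely-strongly-hollow conclusion follows for arbitrary joins with no further input. Both arguments use the same ingredients (the two principal identities and quasi-locality), but yours handles arbitrary joins in one stroke and needs no appeal to compactness of $a$, while the paper's is marginally shorter for the binary case at the cost of leaving that final upgrade implicit.
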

	
	\begin{proof}
		$(1)\Rightarrow (2)$: Let $a\in Q$ be a principal element. Suppose $a$ is completely strongly hollow. Then by Proposition \ref{3}, $a$ is comparable to every element in $Q$. Conversely, suppose $a$ is comparable to every element in $Q$ and $a\leq r\vee s$ for some $r,s\in Q$. If $a=0$, then we are done, and so, suppose $a\neq 0$. Then if $a\nleq r$ and $a\nleq s$, we have that $a\geqslant r$ and $a\geqslant s$. Therefore, there exist $c,c'\in Q$ such that $ac=r$ and $ac'=s$. Since $a=a(c\vee c')$, we have $1=(c\vee c')\vee (0:a)$. But $(0:a)\neq 1$ since $a\neq 0$. Therefore, either $c=1$ or $c'=1$, and we are done.  
		
		$(2)\Rightarrow (1)$: Notice that $1$ is a principal element which is comparable to every other element in $Q$. Therefore, we have that $1$ is completely strongly hollow. Hence, by Proposition \ref{P2.11}, $Q$ is a quasi-local lattice.
	\end{proof}

	The following proposition shows sufficient conditions on a multiplicative lattice that make minimality equivalent to hollowness. 
	\begin{proposition}\label{P3.1}
		Let Q be a semi-simple multiplicative lattice. Then a non-zero element $a\in Q$ is strongly hollow if and only if $a$ is minimal.
	\end{proposition}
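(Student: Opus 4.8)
The plan is to prove both implications directly, exploiting two features of the setting: semi-simplicity, which gives $\bigwedge\{m\mid m\in\mathcal{M}\}=J(Q)=0$, and the multiplicative structure, under which $xy\leq x\wedge y$ for all $x,y$ (since $xy\leq x\cdot 1=x$ and $xy\leq 1\cdot y=y$) and $a\cdot 1=a$. I will repeatedly use that for a maximal element $m$, if $z\nleq m$ then $z\vee m=1$, as there is nothing strictly between $m$ and $1$. The common preliminary observation is that, since $\bigwedge\mathcal{M}=0$, every nonzero element fails to lie below at least one maximal element (otherwise it would lie below the meet of all maximal elements, namely $0$); in particular $\mathcal{M}\neq\emptyset$ whenever $0\neq 1$, so the argument is non-vacuous exactly when there is a nonzero $a$ to consider.

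For the implication \emph{strongly hollow $\Rightarrow$ minimal}, suppose $0\neq a$ is strongly hollow and let $0\neq x\leq a$. By the preliminary observation there is a maximal $m$ with $x\nleq m$, and since $x\leq a$ this forces $a\nleq m$. Then $x\vee m=1\geq a$, so strong hollowness yields $a\leq x$ or $a\leq m$; the latter is excluded, so $a\leq x$, whence $x=a$. Thus $a$ admits no nonzero proper element below it, i.e. $a$ is minimal. (Notably this direction does not even require Proposition~\ref{T2.8}.)

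For the converse, suppose $a$ is minimal and $a\leq j\vee k$. Choose a maximal $m$ with $a\nleq m$; minimality forces $a\wedge m=0$ (as $a\wedge m<a$), while maximality gives $a\vee m=1$. The key step is to upgrade this to idempotence: $a=a\cdot 1=a(a\vee m)=a^2\vee am=a^2$, because $am\leq a\wedge m=0$. Now multiply $a\leq j\vee k$ by $a$ to obtain $a=a^2\leq aj\vee ak$. If $a\nleq j$ and $a\nleq k$, then by minimality $a\wedge j=a\wedge k=0$, so $aj=ak=0$ and hence $a\leq 0$, a contradiction. Therefore $a\leq j$ or $a\leq k$, so $a$ is strongly hollow.

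The main obstacle to anticipate is that $Q$ need not be distributive, so the naive attempt for the converse — reasoning from $a\wedge j=a\wedge k=0$ together with $a\leq j\vee k$ directly — does not close, and indeed fails for non-multiplicative diamond-type lattices. The multiplicative hypothesis is precisely what rescues the argument: it supplies $xy\leq x\wedge y$, and through the identity $a=a^2$ it lets one replace the join $j\vee k$ by $aj\vee ak$, collapsing both meets $a\wedge j$ and $a\wedge k$ to $0$. Establishing $a=a^2$ from $a\vee m=1$ and $a\wedge m=0$ is the crux; the remaining verifications are routine.
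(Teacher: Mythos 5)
Your proof is correct and follows essentially the same route as the paper's: the forward direction is identical (pick a maximal $m$ with $x\nleq m$, note $a\leq x\vee m=1$, and apply strong hollowness), and the converse likewise proceeds by establishing $a=a^2$ and then multiplying $a\leq j\vee k$ through by $a$ so that minimality collapses the terms. The only cosmetic difference is the derivation of idempotence: the paper argues $a^2=0$ would force $a\leq J(Q)=0$ because maximal elements are prime, whereas you obtain it from $a=a(a\vee m)=a^2\vee am$ with $am\leq a\wedge m=0$ --- a minor variant of the same step.
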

	
	\begin{proof}
		Let $a\in Q$ be strongly hollow and $0<x\leq a$. Since $Q$ is semi-simple, there exists a maximal element $m\in Q$ such that $m\ngeq x$. Since $a\leq m\vee x$, we have that $a\leq x$ (because $x\leq a \Rightarrow a \nleq m$), and thus $a=x$. Conversely, assume $a$ is minimal and take $a\leq r \vee s$. Let $m\in\mathcal{M}$. Then $a^2\leq m$ implies $a\leq0$, and therefore, $a\neq0 \Leftrightarrow a^2\neq0$. Hence, $a$ must be idempotent. As a result, $a=a(r\vee s)$. If $ar\leq a$,  then $ar=a$, in which case, we are done; or if $ar=0$, then we must have $a=as$.  The same result holds with the other assumption, and thus $a$ is strongly hollow.
	\end{proof}
	
	The same proof of the `only if' implication above is given in \cite[Lemma~5.6]{Abs-CIT}. 
	Next, we study strongly hollow elements in Gelfand $C$-lattices. But first, a few lemmas.
	
	\begin{lemma}\label{L3.2}
		Let $Q$ be a semi-simple $C$-lattice. Then a non-zero element $e$ is minimal and complemented if there exists a unique element $m\in \mathcal{M}$ such that $e\nleq m$.
	\end{lemma}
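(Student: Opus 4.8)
The plan is to exhibit $m$ itself as a complement of $e$ and then use the multiplicative structure to force minimality. The key observation is that the uniqueness hypothesis pins down where $e$ sits relative to the maximal elements: since $m$ is the \emph{only} maximal element failing to dominate $e$, we have $e \leq n$ for every maximal $n \neq m$, and hence $e \leq b$, where $b := \bigwedge\{n \in \mathcal{M} \mid n \neq m\}$.

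First I would establish that $m$ is a complement of $e$. For the meet, note that $b \wedge m = \bigwedge\{n \mid n \in \mathcal{M}\} = J(Q) = 0$ by semi-simplicity, so $e \wedge m \leq b \wedge m = 0$; the degenerate case $\mathcal{M} = \{m\}$ is handled separately, since then $J(Q) = m = 0$ forces $e \wedge m = 0$ directly. For the join, $e \nleq m$ together with maximality of $m$ gives $e \vee m = 1$. Thus $e$ is complemented.

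Next, for minimality, I would take any $x$ with $0 < x \leq e$ and show $x = e$. Since $x \leq e$ and $e \wedge m = 0$, the assumption $x \leq m$ would give $x \leq e \wedge m = 0$, contradicting $x \neq 0$; hence $x \nleq m$ and so $x \vee m = 1$. The slick step is to multiply: because multiplication is monotone and $1$ is the identity, $em \leq e \wedge m = 0$, whence
\[
e = e \cdot 1 = e(x \vee m) = ex \vee em = ex \leq x.
\]
Combined with $x \leq e$ this yields $x = e$, proving $e$ is minimal.

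The only real subtlety is recognizing that $m$ plays the role of the complement---this is what couples the uniqueness hypothesis (which places $e$ below every other maximal element, hence below $b$) with semi-simplicity (which makes $b \wedge m$ collapse to $0$). For minimality, the point worth flagging is that one does \emph{not} need the lattice to be modular: the identity $e = e(x \vee m) = ex$, valid because $em \leq e \wedge m = 0$, replaces the modular-law argument that two comparable complements of a common element must coincide. I would also record at the outset that $\mathcal{M} \neq \emptyset$---indeed $m$ exists by hypothesis, and in a $C$-lattice every element lies below a maximal element---so the displayed meets are over a nonempty index set except in the degenerate case just noted.
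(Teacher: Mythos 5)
Your proof is correct, and for the minimality half it takes a genuinely different, and in fact more elementary, route than the paper. The complementedness part coincides with the paper's argument: both identify $m$ itself as the complement, deriving $e \wedge m \leq J(Q) = 0$ from the uniqueness hypothesis plus semi-simplicity, and $e \vee m = 1$ from maximality. For minimality, however, the paper argues locally: given $0 < x \leq e$, it shows $x_n = e_n$ in the localization $Q_n$ for every maximal $n$ (at $n = m$ both localize to $1_m$; at $n \neq m$ it picks a compact $c \leq m$ with $c \nleq n$, uses $ec = xc = 0$, and concludes $x_n = e_n = 0_n$), and then invokes the $C$-lattice principle that elements agreeing at all maximal localizations are equal. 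Your argument replaces all of this with the single computation $e = e(x \vee m) = ex \vee em = ex \leq x$, which uses only distributivity of the product over joins, monotonicity, and $em \leq e \wedge m = 0$. This buys two things: it avoids localization machinery entirely, and it exposes that the $C$-lattice hypothesis is not actually needed for this lemma---your proof goes through in any semi-simple multiplicative lattice, whereas the paper's local-global step leans on compact generation both to produce the compact element $c$ and to conclude $x = e$ from the local equalities. Your version is shorter and strictly more general.
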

	
	\begin{proof}
		Suppose $m\in \mathcal{M}$ and $e\leq n$ for all $n\in \mathcal{M}\setminus\{m\}$ while $e\nleq m$. Then $e\wedge m\leq J(Q)=0$ and $e\vee m=1$, and hence $e$ is complemented. Let \(0<x\leq e\). Then $x\wedge m\leq e\wedge m=0$ implies $x\nleq m$. Therefore, in $Q_m$, we have that $e_m=1_m=x_m$. Consider $Q_n$ for $n\in \mathcal{M}\setminus\{m\}$. Since $Q$ is a $C$-lattice we have that \[m=\bigvee\{c\mid \text{c is compact and }c\leq m\}.\] Now since $m\nleq n$, there must exists a compact element $c\leq m$ such that $c\nleq n$. Since $a\cdot m\leq a\wedge m=0$, we have that $a\cdot c=0=x\cdot c$. Thus, in $Q_n$, we have  $0_n=(a\cdot c)_n=a_n\cdot c_n=a_n\cdot1_n$ and the same holds for $x$. Hence $a_n=x_n$, for all maximal element $n\in \mathcal{M}$, and therefore, $x=a$.
	\end{proof}
	
	\begin{lemma}\label{L3.3}
		Any maximal strongly hollow element in a non-local lattice is idempotent.
	\end{lemma}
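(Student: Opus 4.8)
The plan is to prove the contrapositive-flavoured statement that a maximal strongly hollow element cannot lie below $J(Q)$, and to isolate a single computation that turns ``$a\nleq J(Q)$'' directly into idempotency. Write $a$ for the maximal strongly hollow element. Since $a\cdot a\le a\cdot 1=a$ we always have $a^{2}\le a$, so it suffices to establish $a\le a^{2}$. First I would record the following general fact, which uses neither maximality nor non-locality: \emph{if a strongly hollow element $a$ satisfies $a\nleq J(Q)$, then $a$ is idempotent.} Indeed, $a\nleq J(Q)=\bigwedge\mathcal{M}$ produces a maximal element $\mathfrak{m}$ with $a\nleq\mathfrak{m}$, so $a\vee\mathfrak{m}=1$ and hence $a=a\cdot 1=a(a\vee\mathfrak{m})=a^{2}\vee a\mathfrak{m}$. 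Strong hollowness applied to $a\le a^{2}\vee a\mathfrak{m}$ gives $a\le a^{2}$ or $a\le a\mathfrak{m}$; the second option forces $a\le a\mathfrak{m}\le\mathfrak{m}$, contradicting $a\nleq\mathfrak{m}$, so $a\le a^{2}$ and $a=a^{2}$. This is exactly the structural dichotomy of Proposition~\ref{T2.8} read in multiplicative terms.

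By this fact the entire lemma reduces to excluding the possibility $a\le J(Q)$, and it is precisely here that the two hypotheses enter. So I would assume, for a contradiction, that $a\le J(Q)$. Non-locality supplies two distinct maximal elements $\mathfrak{m}_{1}\neq\mathfrak{m}_{2}$, and $a\le J(Q)\le\mathfrak{m}_{1}\wedge\mathfrak{m}_{2}$; note in passing that $1$ is then not strongly hollow, so $a<1$. From $\mathfrak{m}_{1}\vee\mathfrak{m}_{2}=1$ we obtain $a=a(\mathfrak{m}_{1}\vee\mathfrak{m}_{2})=a\mathfrak{m}_{1}\vee a\mathfrak{m}_{2}$, whence strong hollowness yields (after relabelling) $a=a\mathfrak{m}_{1}$. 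The goal of the remaining argument is then to manufacture a strongly hollow element strictly above $a$, contradicting the maximality of $a$ and thereby ruling out $a\le J(Q)$; combined with the first paragraph this finishes the proof.

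The main obstacle is exactly this enlargement step: from the ``$\mathfrak{m}_{1}$-stable'' element $a=a\mathfrak{m}_{1}$, produce a strictly larger join-prime element. The natural candidate is the residual $b:=(a:\mathfrak{m}_{2})$ at the other maximal element; since $a\mathfrak{m}_{2}\le a$ one has $a\le b$ for free, so the two delicate points are that $b$ is again strongly hollow and that the inclusion $a\le b$ is strict. I expect both to hinge on the primeness of $\mathfrak{m}_{2}$ together with a case split on whether $a\mathfrak{m}_{2}=a$; when $a$ is stable under both maximal elements one instead plays the decompositions $a=a\mathfrak{m}_{1}$ and $a=a\mathfrak{m}_{2}$ against each other. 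In the ambient $C$-lattice setting of this section, compactness of the generators of $\mathfrak{m}_{1}$ (choosing a compact $c\le\mathfrak{m}_{1}$ with $c\nleq\mathfrak{m}_{2}$) should be what forces strictness.

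Finally, the guiding model is $Q=\operatorname{Id}(k[[x]]\times k[[y]])$: there $a=(x)\times 0$ is strongly hollow, lies below $J(Q)$, and is \emph{not} idempotent, while the residual at the opposite maximal ideal enlarges it to the idempotent summand $k[[x]]\times 0$, which is the maximal strongly hollow element above $a$ and sits off $J(Q)$. This example both pinpoints where the enlargement should come from and shows that the hypothesis ``maximal'' is indispensable: without it, $(x)\times 0$ is a non-idempotent strongly hollow element below $J(Q)$.
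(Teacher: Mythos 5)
Your difficulty comes from a misreading of the statement, not from the mathematics. In this paper, ``maximal strongly hollow element'' means a maximal element of $Q$ (a member of $\mathcal{M}$) which happens to be strongly hollow, \emph{not} an element maximal in the poset of strongly hollow elements. This is how the lemma is used later: in Proposition~\ref{P5.6} the hypothesis is that every non-zero maximal element of $Q$ is completely strongly hollow, and the lemma is invoked to conclude that every maximal element of $Q$ is idempotent. Under this reading, your first paragraph already finishes the proof: your general fact (a strongly hollow $a$ with $a\nleq J(Q)$ is idempotent) applies, because a maximal element $m$ of a non-local lattice never satisfies $m\leq J(Q)$ --- pick a second maximal element $n\neq m$; then $m\leq J(Q)\leq n$ would force $m\leq n$, contradicting the incomparability of distinct maximal elements. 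Your computation ($a=a^{2}\vee a\mathfrak{m}$, then strong hollowness and $a\nleq\mathfrak{m}$ force $a=a^{2}$) is essentially the paper's own one-line argument, which instead uses primeness of the second maximal element $n$ to get $m^{2}\nleq n$, hence $m\leq 1=m^{2}\vee n$, and then applies strong hollowness.

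Everything after your first paragraph addresses the other reading, and there the proposal has a genuine gap: the ``enlargement step'' --- that $b=(a:\mathfrak{m}_{2})$ is again strongly hollow and strictly larger than $a$ --- is exactly what your plan needs, and neither claim is proven; ``I expect both to hinge on the primeness of $\mathfrak{m}_{2}$'' and ``should be what forces strictness'' are placeholders, not arguments. Moreover, the lemma is stated for arbitrary multiplicative lattices, so the compactness and $C$-lattice machinery you invoke to force strictness is not available from the hypotheses. So, as written, the proposal establishes a correct and useful auxiliary fact (which, under the intended reading, together with the two-line observation above, is a complete proof essentially identical to the paper's), but the portion you regard as the heart of the argument proves nothing and targets a statement the paper is not making.
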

	
	\begin{proof}
		Let $m,n\in \mathcal{M}$ be two distinct maximal elements. Thus $m^2\nleq n$, and hence $m\leq m^2\vee n$, which gives $m= m^2$.
	\end{proof}
	Recall that a Gelfand ring (often called a $pm$-ring) is a ring in which every prime ideal is contained in a unique maximal ideal; we will call a lattice Gelfand if the same statement holds for its prime elements. Before we proceed, we need to prove a lattice version of a part of \cite[Theorem 1.2]{DeMarco-Orsatti-1971}.
	
	\begin{lemma}\label{T2}
		If $Q$ is a Gelfand $C$-lattice, then $\mathcal{M}$ is a $T_2$-space.    
	\end{lemma}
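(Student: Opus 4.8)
The plan is to reduce the Hausdorff property to a concrete separation statement about compact elements, and then to manufacture the separating pair by a prime-avoidance (Zorn) argument whose failure would violate the Gelfand hypothesis.

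First I would record that $\mathcal{M}$ is automatically $T_1$: for maximal $m$ the only prime above $m$ is $m$ itself, so $V(m)\cap\operatorname{Spec}(Q)=\{m\}$ is closed, and each maximal element is a closed point. It therefore remains to separate two distinct maximal elements $m,n$ by disjoint open sets. The key reduction is the claim that it suffices to produce compact elements $a\nleq m$ and $b\nleq n$ with $a\cdot b=0$. Granting this, the sets $\{k\in\mathcal{M}\mid a\nleq k\}$ and $\{k\in\mathcal{M}\mid b\nleq k\}$ (each the trace on $\mathcal{M}$ of the basic open $\operatorname{Spec}(Q)\setminus V(\,\cdot\,)$) are open neighbourhoods of $m$ and $n$; and if some $k\in\mathcal{M}$ lay in both, then $a\nleq k$ and $b\nleq k$ while $ab=0\leq k$ with $k$ prime would force $a\leq k$ or $b\leq k$, a contradiction. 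Hence the neighbourhoods are disjoint.

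To produce the pair I would argue by contradiction. Put $S_m=\{c\in Q^*\mid c\nleq m\}$ and $S_n=\{c\in Q^*\mid c\nleq n\}$; since $m,n$ are prime and $Q^*$ is multiplicatively closed, these are multiplicatively closed sets of compact elements containing $1$. Let $S=\{ab\mid a\in S_m,\ b\in S_n\}$, which is again multiplicatively closed, contains $1$, and consists of compact elements. If no separating pair exists, then no product $ab$ with $a\in S_m$, $b\in S_n$ equals $0$, so $0\notin S$. Consider $\Sigma=\{x\in Q\mid s\nleq x \text{ for all } s\in S\}$. It contains $0$, and because every member of $S$ is compact it is closed under joins of chains: if $C\subseteq\Sigma$ is a chain and $s\leq\bigvee C$ for some $s\in S$, compactness gives $s\leq\bigvee F$ for a finite $F\subseteq C$, and $\bigvee F\in C$, contradicting $C\subseteq\Sigma$. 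By Zorn's Lemma $\Sigma$ has a maximal element $p$.

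I would then verify that $p$ is a prime element below both $m$ and $n$, which is the heart of the matter. If $x\nleq p$ and $y\nleq p$, maximality yields $s\leq p\vee x$ and $t\leq p\vee y$ with $s,t\in S$, whence $st\leq(p\vee x)(p\vee y)=p\cdot p\vee x\cdot p\vee p\cdot y\vee x\cdot y\leq p\vee xy$; were $xy\leq p$ this would give $st\leq p$ with $st\in S$, contradicting $p\in\Sigma$, so $xy\nleq p$ and $p$ is prime (and $p\neq 1$ since $1\in S$). Taking $b=1$ shows every compact $a\nleq m$ satisfies $a\nleq p$, i.e.\ every compact element below $p$ lies below $m$; since $Q$ is a $C$-lattice, $p=\bigvee\{c\in Q^*\mid c\leq p\}\leq m$, and symmetrically $p\leq n$. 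Thus $p$ is a prime element below the two distinct maximal elements $m$ and $n$, contradicting the Gelfand hypothesis. This yields the required pair $a,b$ and completes the proof that $\mathcal{M}$ is $T_2$. The main obstacle I anticipate is precisely this middle step: constructing the $S$-avoiding prime $p$ and verifying its primality through the inequality $(p\vee x)(p\vee y)\leq p\vee xy$, together with the twofold use of compactness—once to pass joins through chains in the Zorn argument, and once to conclude $p\leq m$ from the $C$-lattice property.
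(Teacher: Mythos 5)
Your proof is correct and takes essentially the same route as the paper's: both reduce the Hausdorff property to producing compact elements $s\nleq m_1$, $t\nleq m_2$ with $st=0$, and both obtain this pair by showing that otherwise the multiplicatively closed set of such products would be avoided by a prime element lying below both $m_1$ and $m_2$ (via the $C$-lattice property), contradicting the Gelfand hypothesis. The only difference is that the paper invokes \cite[Lemma~1.1]{Georgescu-Voiculescu-1989} for the existence of that prime, whereas you prove it inline with the standard Zorn's lemma argument.
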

	\begin{proof}
		Let $m_1,m_2\in \mathcal{M}$ such that $m_1\neq m_2$. Consider the set $A=\{st\mid s\nleq m_1 \text{ and } t\nleq m_2\}$. If $0\notin A$, then there exists $p\in \operatorname{Spec}(Q)$ such that $\twoheaddownarrow p\cap A=\emptyset$ \cite[Lemma~1.1]{Georgescu-Voiculescu-1989}. Then every compact element $c\leq p$ is such that $c\leq m_1$ and $c\leq m_2$. But since $Q$ is a $C$-lattice, this implies $p\leq m_1$ and $p\leq m_2$, which is a contradiction. Therefore, $0\in A$ which implies  there exists $s\nleq m_1$ and $t\nleq m_2$ such that $st=0$, and hence $t\leq m_1$ and $s\leq m_2$. Therefore, the open sets $D(s)=\{m\in \mathcal{M}\mid s\nleq m\}$ and $D(r)=\{m\in \mathcal{M}\mid r\nleq m\}$ are disjoint open sets that separate $m_1$ and $m_2$.
	\end{proof}
	
	\begin{theorem}\label{T3.4}
		Let $Q$ be a Gelfand $C$-lattice which is semi-simple. Let $a$ be a non-zero element. Then the following are equivalent:
		
		\begin{enumerate}
			\item $a$ is strongly hollow,
			\item $x \cdot y \neq 0$ for all $0<x,y \leq a$,
			\item $a$ is minimal
		\end{enumerate}
	\end{theorem}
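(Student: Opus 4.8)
The plan is to get $(1)\Leftrightarrow(3)$ almost for free and then to splice $(2)$ into the equivalence, with $(2)\Rightarrow(3)$ carrying all the real content. Since a Gelfand $C$-lattice which is semi-simple is in particular a semi-simple multiplicative lattice, Proposition~\ref{P3.1} applies directly and yields $(1)\Leftrightarrow(3)$: a non-zero element is strongly hollow exactly when it is minimal. It therefore suffices to establish $(3)\Rightarrow(2)$ and $(2)\Rightarrow(3)$.

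For $(3)\Rightarrow(2)$ I would first note that a minimal element is idempotent. Indeed $a^2\leq a$, so minimality forces $a^2=a$ or $a^2=0$; but $a^2=0$ would give $a^2\leq m$, and hence $a\leq m$ (each maximal element being prime), for every $m\in\mathcal{M}$, so that $a\leq J(Q)=0$, contradicting $a\neq 0$. Thus $a^2=a\neq 0$. Now any $x,y$ with $0<x,y\leq a$ must equal $a$ by minimality, so $xy=a^2=a\neq 0$, which is exactly $(2)$.

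The substantial step is $(2)\Rightarrow(3)$, which I would argue by contraposition: assuming $0\neq a$ is \emph{not} minimal, I will produce $0<u,v\leq a$ with $uv=0$. First, since $a\neq 0$ and $J(Q)=0$, the set $\{m\in\mathcal{M}\mid a\nleq m\}$ is non-empty; and if it were a singleton, Lemma~\ref{L3.2} would force $a$ to be minimal, against the assumption. Hence there are distinct $m_1,m_2\in\mathcal{M}$ with $a\nleq m_1$ and $a\nleq m_2$. Next comes the decisive use of the Gelfand hypothesis: by Lemma~\ref{T2} the space $\mathcal{M}$ is Hausdorff, so $m_1$ and $m_2$ can be separated by disjoint basic open neighbourhoods $D(s)\ni m_1$ and $D(t)\ni m_2$ in $\mathcal{M}$, i.e. $s\nleq m_1$, $t\nleq m_2$, and $D(s)\cap D(t)=\emptyset$. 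Disjointness says that every $m\in\mathcal{M}$ lies outside $D(s)$ or outside $D(t)$, that is $s\leq m$ or $t\leq m$, whence $st\leq m$ for all such $m$; semi-simplicity then gives $st\leq J(Q)=0$, so $st=0$.

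With the orthogonal pair $s,t$ in hand I would set $u:=as$ and $v:=at$. Both satisfy $u,v\leq a$, and $uv=(as)(at)=a^2\,st=0$. Neither vanishes: if $as=0$ then $as\leq m_1$, and primeness of $m_1$ would force $a\leq m_1$ or $s\leq m_1$, both false, so $as\neq 0$; symmetrically $at\neq 0$ using $m_2$. Thus $0<u,v\leq a$ with $uv=0$, contradicting $(2)$, and therefore $a$ is minimal. The one genuinely delicate point is the passage in the third paragraph from the topological separation of $m_1$ and $m_2$ to the algebraic identity $st=0$: this is exactly where Gelfandness (Hausdorffness of $\mathcal{M}$) and semi-simplicity ($J(Q)=0$) must be combined. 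Once that orthogonality is available, multiplying by $a$ and reading off non-vanishing from the primeness of the maximal elements is routine.
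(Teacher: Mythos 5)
Your proof is correct and follows essentially the same route as the paper's: Proposition~\ref{P3.1} gives $(1)\Leftrightarrow(3)$, and $(2)\Rightarrow(3)$ is obtained by combining Lemma~\ref{T2} (Hausdorffness of $\mathcal{M}$), semi-simplicity, and Lemma~\ref{L3.2}, producing from two maximal elements avoiding $a$ a pair of non-zero elements below $a$ with zero product. The only cosmetic differences are that you separate $m_1,m_2$ by disjoint \emph{basic} open sets $D(s),D(t)$ directly (which is in fact exactly what the proof of Lemma~\ref{T2} supplies) whereas the paper passes through general open sets and basic closed sets to extract its elements $b_1,b_2$, and that you spell out $(3)\Rightarrow(2)$ (idempotence of a minimal element via primeness of maximal elements), which the paper dismisses as trivial.
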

	
	\begin{proof}
		We already have $(1)\Leftrightarrow(3)$, and $(3)\Rightarrow(2)$ is trivial. Therefore, it is sufficient to show $(2)\Rightarrow(3)$. Let $a\in Q$ be a non-zero compact element such that $x\cdot y \neq 0$, for all non-zero elements $x,y\leq a$. To show that $a$ is minimal, we will show that $a$ is less than every maximal element except for a unique maximal element. Notice that since $Q$ is semi-simple, we have that $a\nleq J(Q)$, and hence, there must be at least one maximal element $m$ such that $a\nleq m$. With the intent of obtaining a contradiction, suppose there exist distinct $m_1$, $m_2\in \mathcal{M}$ such that $a\nleq m_1$ and $a\nleq m_2$. Then by Lemma \ref{T2}, there exist disjoint open sets $G,H\subset\mathcal{M}$ with $m_1\in G$ and $m_2\in H$. Then,  $\mathcal{M}\setminus G$ and $\mathcal{M}\setminus H$ are closed sets, and thus, are the intersection of basic closed sets; $\mathcal{M}\setminus G =\bigcap V(x_\alpha) $. Therefore, there exist  non-zero $b_1$ and $b_2$ such that $\mathcal{M}\setminus G \subseteq V(b_1)$ and $\mathcal{M}\setminus H\subseteq V(b_2)$ with $b_1\nleq m_1$ and $b_2\nleq m_2$. Notice that $a\cdot b_i\nleq m_i$ since $a\nleq m_i$ and $b_i\leq m_i$ implies $b_i\leq J(Q)=0$. Hence, $a\cdot b_i\neq 0$, $a\cdot b_i\leq a$. Thus, $0=(a\cdot b_1)(a\cdot b_2)\leq J(Q)$, contradicts the assumption that the product of any two non-zero elements less than $a$ is non-zero. Therefore, there cannot exist two maximal elements which are not greater than $a$, hence by Lemma \ref{L3.2}, $a$ is minimal.
	\end{proof}
	
	\begin{corollary}\label{C3.6}
		Let $Q$ be a semi-simple $C$-lattice which is weak meet principally generated. Then for $a\neq 0$, the above properties are equivalent to $a$ being a uniform element.
	\end{corollary}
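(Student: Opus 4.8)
The plan is to read Corollary~\ref{C3.6} as adjoining uniformity to the list of equivalent conditions of Theorem~\ref{T3.4}, under the standing hypotheses of that theorem (a semi-simple Gelfand $C$-lattice) together with weak meet principal generation. Since Theorem~\ref{T3.4} already delivers $(1)\Leftrightarrow(2)\Leftrightarrow(3)$ for a non-zero $a$, it is enough to show that $a$ is uniform if and only if it satisfies condition~$(2)$, that is, $x\cdot y\neq 0$ for all $0<x,y\leqslant a$. All of the new content is the fact that in a reduced lattice the vanishing of a product and of a meet coincide.

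First I would observe that semi-simplicity makes $Q$ reduced: as every maximal element is prime, $\sqrt{0}=\bigwedge\operatorname{Spec}(Q)\leqslant\bigwedge\mathcal{M}=J(Q)=0$, so $Q$ has no non-zero nilpotent. The key elementary step is then that for all $x,y\in Q$ one has $x\cdot y=0$ if and only if $x\wedge y=0$. One direction is formal: since $1$ is the identity and multiplication is order preserving, $x\cdot y\leqslant x\cdot 1=x$ and $x\cdot y\leqslant 1\cdot y=y$, so $x\cdot y\leqslant x\wedge y$, and $x\wedge y=0$ forces $x\cdot y=0$. For the converse, set $z:=x\wedge y$; monotonicity gives $z^{2}=z\cdot z\leqslant x\cdot y$, so $x\cdot y=0$ yields $z^{2}=0$, and reducedness forces $z=0$.

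Applying this equivalence to the non-zero elements below $a$ converts condition~$(2)$ verbatim into the statement that $x\wedge y\neq 0$ for all $0<x,y\leqslant a$, which is precisely uniformity of $a$; the boundary cases in which $x$ or $y$ equals $a$ are covered in the same way by $y^{2}\leqslant a\cdot y$ and reducedness. Hence uniform $\Leftrightarrow(2)$, and combining with Theorem~\ref{T3.4} yields the equivalence of all four conditions. I expect no serious obstacle: the one point to keep in mind is that the equivalence $(2)\Leftrightarrow(3)$ rests on the Gelfand hypothesis inherited from Theorem~\ref{T3.4} and cannot be dropped, since in a merely semi-simple weak meet principally generated $C$-lattice uniformity is genuinely weaker than minimality (for instance, in the ideal lattice of $\mathds{Z}$ the top element is uniform but neither minimal nor strongly hollow).
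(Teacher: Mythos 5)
Your proof is correct, and it takes a genuinely simpler and more general route than the paper's. The paper also reduces Corollary~\ref{C3.6} to showing that condition (2) of Theorem~\ref{T3.4} is equivalent to uniformity, but its proof of uniform $\Rightarrow$ (2) runs through the principality hypothesis: for weak meet principal $0<x,y\leq a$ it writes $x\wedge y=xc=yd$, squares to get $(x\wedge y)^2=(cd)(xy)$, invokes reducedness to conclude $xy\neq 0$, and then handles arbitrary $0<x,y\leq a$ by expressing them as joins of weak meet principal elements --- this last step is exactly where the hypothesis that $Q$ is weak meet principally generated enters. Your observation that monotonicity of multiplication alone gives $(x\wedge y)^2\leq xy\leq x\wedge y$ short-circuits all of this: in any reduced multiplicative lattice $xy=0$ if and only if $x\wedge y=0$, so condition (2) coincides with uniformity with no principality assumption whatsoever. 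Thus your argument proves a stronger statement, namely that the hypothesis ``weak meet principally generated'' in the corollary is redundant, whereas the paper's argument genuinely uses it. You are also right on the one delicate point of interpretation: the Gelfand hypothesis of Theorem~\ref{T3.4} must be understood as retained in the corollary for ``the above properties'' to be mutually equivalent, and your example of the ideal lattice of $\mathds{Z}$ (a semi-simple, principally generated, non-Gelfand $C$-lattice whose top element is uniform but neither minimal nor strongly hollow) correctly witnesses that it cannot be dropped.
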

	
	\begin{proof}
		It is clear that property (2) from Theorem \ref{T3.4} implies that $a$ is uniform. We will now show that if $a$ is a uniform element, then $a$ satisfies (2). Let $0<x,y\leq a$ be two weak meet principal elements. Then $x\wedge y\neq 0$, and thus, there exist $c,d \in Q$ such that $xc=yd=x\wedge y\neq 0$. Since $Q$ is semisimple,  there are no nilpotent elements and as such \[0\neq(x\wedge y)^2=(cx)(dy)=(cd)(xy),\] which gives $xy\neq0$. Now, let $0<x,y\leq a$. Since $x$ and $y$ are the joins of the weak meet principal elements below them, $xy=0$ implies that the product of all such pairs of weak meet principal elements below them must be $0$, which is a contradiction, and hence, $xy\neq 0$. 
	\end{proof}
	
	Recall that an \emph{$I_0$ lattice} is a lattice in which for every element $x\neq J(Q)$, there exists a complemented element $0<c\leq x$. A non-zero idempotent element $e$ is called \emph{primitive} if it cannot be written as the join of two non-zero orthogonal idempotent elements.
	
	\begin{proposition}
		Let $Q$ be an $I_0$ lattice with $1$ compact and $e$ a non-zero compact, complemented, neutral element of $Q$. Then $e$ is completely strongly hollow if and only if $e$ is primitive.
	\end{proposition}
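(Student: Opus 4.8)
The plan is to first collapse the two hollowness notions into one and then prove the two implications separately. Since $e$ is complemented, write $b$ for its complement; then $eb\le e\wedge b=0$, so $e=e\cdot 1=e(e\vee b)=e^2\vee eb=e^2$, i.e.\ $e$ is idempotent and the word ``primitive'' is meaningful for it. Moreover $e$ is compact, hence finitely generated, so by the earlier equivalence of strong hollowness and complete strong hollowness for finitely generated elements it suffices to prove that $e$ is \emph{strongly hollow} if and only if $e$ is primitive.

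For the implication strongly hollow $\Rightarrow$ primitive, suppose towards a contradiction that $e=e_1\vee e_2$ with $e_1,e_2$ non-zero orthogonal idempotents, so $e_1e_2=0$. Strong hollowness applied to $e\le e_1\vee e_2$ gives, say, $e\le e_1$; combined with $e_1\le e_1\vee e_2=e$ this yields $e=e_1$. Then $e_2\le e_1$, so $e_2^2\le e_1e_2=0$, and idempotence forces $e_2=e_2^2=0$, contradicting $e_2\neq 0$. Hence no such decomposition exists and $e$ is primitive. Note that this direction uses neither neutrality nor the $I_0$ hypothesis.

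For the converse I would argue contrapositively: assuming $e$ is not strongly hollow, I will exhibit a decomposition of $e$ into two non-zero orthogonal idempotents. Since $1$ is compact and $e$ is neutral and complemented, Theorem \ref{T2.20} gives an element $x<e$ with $x\nleq J(Q)$; in particular $x\neq J(Q)$, so the $I_0$ hypothesis produces a complemented element $0<c\le x$, whence $c<e$. Writing $c'$ for a complement of $c$ and setting $d:=e\wedge c'$, I use that a neutral element is distributive to compute $e=e\wedge(c\vee c')=(e\wedge c)\vee(e\wedge c')=c\vee d$ (using $c\le e$), while $c\wedge d=(c\wedge c')\wedge e=0$ and $c\cdot d\le c\wedge d=0$. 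Both pieces are non-zero: $c>0$ by construction, and $d=0$ would force $e=c<e$.

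The main obstacle is that $d$, being merely a meet, is not obviously idempotent. I would resolve this by observing that $[0,e]$ is itself a multiplicative lattice with identity $e$: for any $y\le e$ one has $y=y(e\vee b)=ye\vee yb$ with $yb\le e\wedge b=0$, so $ye=y$. Applying this to $d$ gives $d=de=d(c\vee d)=dc\vee d^2=d^2$, so $d$ is idempotent, and $c$ is idempotent as a complemented element. Thus $e=c\vee d$ realises $e$ as a join of two non-zero orthogonal idempotents, contradicting primitivity. Combining the two implications with the reduction above yields the stated equivalence.
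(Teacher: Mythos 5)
Your proof is correct, and although it follows the same overall skeleton as the paper's (reduce completely strongly hollow to strongly hollow via compactness, then handle each implication by producing or refuting a decomposition into non-zero orthogonal idempotents), both implications are argued by genuinely different means. For strongly hollow $\Rightarrow$ primitive, you apply strong hollowness of $e$ directly to a putative decomposition $e=e_1\vee e_2$ and kill $e_2$ via $e_2=e_2^2\le e_1e_2=0$; the paper instead first invokes Corollary \ref{T2.18} to get a unique maximal element of $[0,e]$ and argues by locality. Your route is shorter and, as you note, uses neither neutrality nor the $I_0$ hypothesis in this direction, while the paper's does (through Corollary \ref{T2.18}). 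For primitive $\Rightarrow$ strongly hollow, both arguments run through Theorem \ref{T2.20} and the $I_0$ property to obtain a complemented element $0<c\le x<e$, but the decompositions differ: the paper multiplies $e$ into the complement pair $c\vee d=1$, writing $e=ec\vee ed$, which makes idempotence of the two pieces automatic from $(ec)^2=e^2c^2=ec$ and yields the contradiction $e=ec\le c\le x$; you instead meet $e$ with the pair, writing $e=c\vee(e\wedge c')$, which requires the neutrality of $e$ (for the distributive law) plus an extra observation --- that $e$ acts as a multiplicative identity on $[0,e]$, so $d=de=d^2$ --- to recover idempotence of $d=e\wedge c'$. The paper's multiplicative decomposition is slightly more economical; yours has the advantage of exhibiting $c$ itself (rather than $ec$) as one of the orthogonal idempotents, and your relative-identity fact about $[0,e]$ is a clean, reusable lemma. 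Both are sound.
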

	
	\begin{proof}
		Suppose $e$ is strongly hollow. Then, by Corollary \ref{T2.18},  there exists a unique maximal element $0<x\leq e$. Let $a\vee b =e$ with $a,b$ two orthogonal idempotent elements of $e$. Thus, by locality, $a=e$ or $b=e$. Without loss of generality, assume that $a=e$. Then $b\cdot e=0$, but then $ab\vee b^2 =0$ gives that $b=0$, which is a contradiction. Therefore, $e$ is primitive. 
		
		For the converse, assume that $e$ is primitive. Let $x<e$ and suppose $x\nleq J(Q).$ Then there exists a complemented element $0<c\leq x$ with complement $d$. Notice that $c^2=c$ and $d^2=d$ with $c\cdot d=0$. Hence \((ec)^2=ec\), \((ed)^2=ed\) and \((ec)(ed)=0\). Since $c\vee d=1$, we have  \((e\cdot c)\vee(e\cdot d)=e\). Since $e$ is primitive, we have  $ec=0$ or $ed=0$. But $e\cdot c \geqslant c^2=c>0$, therefore $ed=0$, which gives $e=ec\leq c\leq x$, is a contradiction. Therefore, $x\leq J(Q)$, and thus, by Theorem \ref{T2.20}, $e$ is strongly hollow.
	\end{proof}
	
	We are now in a position to give a necessary and sufficient condition for the existence of a completely strongly hollow element. Recall that a \emph{Pr\"ufer lattice} is a lattice in which every compact element is principal. It is well known that a lattice is a Pr\"ufer lattice if and only if the localization $L_p$ is totally ordered for every prime $p$ \cite[p. 2]{Pri-Pru}. Notice that every Pr\"ufer lattice which is generated by compact elements (e.g., a $C$-lattice) is principally generated. 
	
	\begin{theorem}\label{T3.8}
		Let $Q$ be a reduced Pr\"ufer $C$-lattice. Then $Q$ has a non-zero completely strongly hollow element if and only if there exists a minimal prime $p$ such that \[\bigwedge_{p\neq q\in \mathcal{P}_0}q\nleq p,\]
		and $Q/p$ is a quasi-local domain.
	\end{theorem}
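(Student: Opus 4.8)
The plan is to reduce both implications to a single residual computation, combined with the localization characterization of strongly hollow elements in Proposition~\ref{T2.15}. The organizing observation is that, in a reduced $C$-lattice, the annihilator of a nonzero compact element is a meet of minimal primes, and that Pr\"uferness makes all the auxiliary local conditions automatic.

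First I record the key computation, valid for a compact $a \neq 0$: since $Q$ is reduced, $0 = \bigwedge_{q \in \mathcal{P}_0} q$, so for $x \in Q$ one has $xa = 0$ iff $xa \leq q$ for every minimal prime $q$, iff $x \leq q$ for every $q \in \mathcal{P}_0$ with $a \nleq q$ (the condition being vacuous when $a \leq q$). Hence
\[
(0:a) = \bigwedge\{\, q \in \mathcal{P}_0 \mid a \nleq q \,\}.
\]
In particular $a \nleq (0:a)$, for otherwise $a^2 \leq (0:a)\,a = 0$ would force $a = 0$ by reducedness. I will also use repeatedly that, as $Q$ is Pr\"ufer, $Q_r$ is totally ordered for every prime $r$; thus every element of $Q_r$ is strongly hollow, and each prime lies above a unique minimal prime.

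For the ($\Leftarrow$) direction, set $e := \bigwedge_{p \neq q \in \mathcal{P}_0} q$. By hypothesis $e \nleq p$, so since $Q$ is a $C$-lattice there is a compact, hence principal, element $c \leq e$ with $c \nleq p$. As $c \leq q$ for all minimal $q \neq p$ while $c \nleq p$, the displayed formula gives $(0:c) = p$, whence $Q/(0:c) = Q/p$ is a quasi-local domain by hypothesis. Because $Q_m$ is a chain for every maximal $m$, the alternative ``$c_m = 0_m$ or $c_m$ is strongly hollow in $Q_m$'' holds automatically, so Proposition~\ref{T2.15} shows $c$ is strongly hollow; being compact, $c$ is completely strongly hollow (Theorem~\ref{T2.7}), and $c \neq 0$ since $c \nleq p$. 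For the ($\Rightarrow$) direction, let $a \neq 0$ be completely strongly hollow. As $Q$ is a principally generated $C$-lattice, $a$ belongs to the generating set, so $a$ is principal and compact, hence join principal and strongly hollow; Proposition~\ref{T2.15} then gives that $Q/(0:a)$ is quasi-local, with unique maximal element $M$. Put $A := \{\, q \in \mathcal{P}_0 \mid a \nleq q \,\}$, so $(0:a) = \bigwedge_{q \in A} q$ and $A \neq \emptyset$. Each $q \in A$ satisfies $(0:a) \leq q < 1$, so it lies in $Q/(0:a)$ and, $1$ being compact there, $q \leq M$. Writing $A = \{p\}$ once singletonhood is established, we obtain $(0:a) = p$, a minimal prime with $Q/p$ quasi-local and, as $p$ is prime, a domain; and since $\{q \in \mathcal{P}_0 \mid q \neq p\}$ is exactly the set of $q$ with $a \leq q$, we get $\bigwedge_{p \neq q \in \mathcal{P}_0} q \geq a \nleq (0:a) = p$, as required.

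The step I expect to be the crux is exactly the singletonhood of $A$ in the forward direction, i.e.\ that there is a unique minimal prime not containing $a$. The argument I intend is: the primes of $Q$ lying below $M$ are precisely those surviving in the totally ordered localization $Q_M$, which therefore has a unique minimal prime; hence $Q$ has a unique minimal prime contained in $M$, and since every element of $A$ is such a minimal prime, $A = \{p\}$. This is the point that upgrades the meet $\bigwedge_{q \in A} q$ to a single prime and makes $(0:a)$ genuinely a minimal prime; the remainder is bookkeeping around the residual formula and the fact that localizations at primes are chains.
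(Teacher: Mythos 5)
Your proof is correct and follows the same overall route as the paper's: the residual identity $(0:a)=\bigwedge\{q\in\mathcal{P}_0\mid a\nleq q\}$ in a reduced lattice, Proposition~\ref{T2.15} to translate between strong hollowness of $a$ and quasi-locality of $Q/(0:a)$, and the total ordering of localizations at primes (Pr\"uferness) to force a unique minimal prime under any maximal element. Where you genuinely diverge is in the backward direction: the paper verifies the local condition of Proposition~\ref{T2.15} by hand, first showing $a_m=0_m$ for every maximal $m\ngeq(0:a)$ and then, at the unique maximal element above $(0:a)$, running an explicit computation with principality of $a_m$ and quasi-locality of $Q_m$ to establish strong hollowness of $a_m$; you instead observe that every $Q_m$ is a chain and that every element of a chain is trivially strongly hollow, so the condition holds vacuously at every maximal element. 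That shortcut is valid and eliminates the longest computation in the paper's proof. One spot to tighten: in your crux step (singletonhood of $A$) you invoke, without proof, the correspondence between primes of $Q$ below $M$ and primes of $Q_M$; the paper avoids this by checking directly that any minimal prime $q\le M$ satisfies $q_M=q$ (if $xs\le q$ with $s$ compact and $s\nleq M$, then $s\nleq q$, so $x\le q$ by primeness), after which comparability of $q_1=(q_1)_M$ and $q_2=(q_2)_M$ in the chain $Q_M$ contradicts minimality of distinct $q_1,q_2$. Adding that one line makes your argument self-contained; as written, it is the only step resting on an unproved (though standard) localization fact.
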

	
	\begin{proof}
		Let $a$ be a completely strongly hollow element of $Q$. Then $a$ is compact and principal, and hence, by Proposition \ref{T2.14}, we have $Q/(0:a)$ is a quasi-local lattice. Since $Q$ is reduced, we have that \begin{align*}
			0=\sqrt{0}=\bigwedge_{q\in \mathcal{P}_0}q&=\left(\bigwedge\{p\in \mathcal{P}_0| \:p\ngeq a\}\right)\wedge\left(\bigwedge\{p\in \mathcal{P}_0| \:p\geq a\}\right) \\
			&=\left(\bigwedge\{p\in \mathcal{P}_0| \:p\ngeq a\}\right)\wedge\sqrt{a} \\
			&\geqslant \left(\bigwedge\{p\in \mathcal{P}_0| \:p\ngeq a\}\right)\wedge a.
		\end{align*}
		Therefore, \(0=a\cdot \bigwedge_{a\nleq q\in \mathcal{P}_0}q\). Thus $\bigwedge_{a\nleq q\in \mathcal{P}_0}q\leq (0:a)$. Since $a\cdot (0:a)=0$, we have  $(0:a)\leq q$, for all $q\in \mathcal{P}_0$ with $q\ngeq a$, which gives $(0:a)\leq\bigwedge_{a\nleq q\in \mathcal{P}_0}q$. Thus $\bigwedge_{a\nleq q\in \mathcal{P}_0}q=(0:a)$. Now, let $m$ be a maximal element in $Q$, and with the intent of obtaining a contradiction, suppose there exist $q_1,q_2\in \mathcal{P}_0$ such that $q_1\leq m$ and $q_2\leq m$. Considering the localization $Q_m$, we find \[(q_i)_m=\bigvee\{x\in Q\mid x\cdot s\leq q_i\:,\: s\nleq m\text{ and } s\text{ compact}\}.\]
		But, if $x\cdot s\leq q_i$, then $x\leq q_i$ since $s\nleq m$. Thus \((q_i)_m=q_i\). Notice that since we have $Q_m$ totally ordered, either $(q_1)_m\leq (q_2)_m$ or vice-versa. Then $q_1$ and $q_2$ are comparable, which contradicts minimality, and hence,  no two minimal primes are less than the same maximal element. If there are two minimal primes $q_1,q_2\ngeq a$, there must exist two maximal elements $n$ and $m$, both of which are greater than $(0:a)$, which contradicts the locality of $Q/(0:a)$. Therefore, there is only one minimal prime $q\ngeq a$, and thus $(0:a)=q$. Therefore, $Q/(0:a)$ is a quasi-local domain and $0<a\leq \bigwedge_{p\neq q\in \mathcal{P}_0}q\nleq p$.
		
		For the converse, suppose there exists a minimal prime $p$ such that $Q/p$ is a quasi-local domain and $\bigwedge_{p\neq q\in \mathcal{P}_0}q\nleq p$. Since $Q$ is compactly generated, select a non-zero compact element $a\leq\bigwedge_{p\neq q\in \mathcal{P}_0}q$. Then such an element is principal and $a\nleq p$ (since then $a\leq\bigwedge_{q\in \mathcal{P}_0}q=0$). Therefore, we have  $a\wedge p=0$ and therefore $p\leq (0:a)$; but then, by the same argument as before, we have  $p=(0:a)$. Hence, $Q/(0:a)$ is a quasi-local domain. Let $m$ be a maximal element such that $(0:a)\nleq m$. Since $Q$ is compactly generated,  there must exist a compact element $c\leq(0:a)$ with $c\nleq m$ and $c\cdot a=0$. Consider \(a_{m}\) in $Q_{m}$. Then \[ 0_m=(a\cdot c)_m=a_m\cdot c_m=a_m\cdot 1_m.\] Therefore, $a_m=0_m$. Let $m$ be the unique maximal element greater than $(0:a)$ and let $a_m\leq s_m\vee r_m$. Suppose $a_m\nleq r_m$ and $a_m \nleq s_m$. Since $Q_m$ is totally ordered, we have  $r_m\leq a_m$ and $s_m\leq a_m$. Moreover, $a_m=r_m\vee s_m$. Since $a$ is principal in $Q$, and thus principal in $Q_m$, there must exist $c_m, b_m\in Q_m$ such that $a_mc_m=r_m$ and $a_mb_m=s_m$. Therefore, $a_m=a_m(c_m\vee b_m)$ and hence, $1_m=(0_m:a_m)\vee(c_m\vee b_m)$. Since $Q_m$ is quasi-local, this gives t $(0:a)_m=1_m$ or $c_m\vee b_m=1_m$. But $(0:a)_m=1_m$ implies that $a_m=0_m$, and hence, $a=0$, which is a contradiction. Therefore, we have that $b_m\vee c_m=1_m$, and thus, $b_m=1_m$ or $c_m=1_m$; but then $a_m=r_m$ or $a_m=s_m$, which is a contradiction. Therefore, we must have  $a_m\leq r_m$ or $a_m\leq s_m$. Then by Proposition \ref{T2.15}, we obtain that $a$ is strongly hollow in $Q$.
	\end{proof}
	\smallskip
	
	\section{Decomposition of multiplicative lattices}
	\label{dml}
	
	In this section, we outline how we can decompose both elements and multiplicative lattices through products involving completely strongly hollow and strongly hollow elements. In order to proceed, we will need a few more definitions. 
	We show how completely strongly hollow and strongly hollow elements occur in the direct product of lattices and give requirements for lattices to have a finite number of maximal and minimal elements based on the existence of hollow elements. 
	
	\begin{proposition}\label{T4.2}
		Let $Q=Q_1\times\cdots\times Q_n$ be a direct product of multiplicative lattices and $a\in Q$. Then a is strongly hollow if and only if there exists $i\in \{1,...,n\}$ with $a=\langle 0,...,a_i,...,0\rangle$ and $a_i$ in the i-th index such that $a_i\in Q_i$ is strongly hollow. In this case, we have  $\kappa(a)=\langle 0,...,\kappa_{Q_i}(a_i),...,0\rangle $ and \((\kappa(a):a)=\langle 1,...,(\kappa_{Q_i}(a_i):a_i),...,1\rangle\).
	\end{proposition}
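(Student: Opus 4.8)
The plan is to reduce the entire statement to a one-coordinate question by exploiting that, in a finite product, order, joins, meets, and multiplication are all computed coordinatewise, so that membership and the operations in $Q$ are controlled slotwise by those in the factors $Q_j$.

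First I would settle the equivalence. Writing $a=\langle a_1,\dots,a_n\rangle$, observe that $a=\bigvee_{j=1}^{n}e_j$, where $e_j:=\langle 0,\dots,a_j,\dots,0\rangle$ carries $a_j$ in the $j$-th slot and $0$ elsewhere, and note $e_j\leq a$ for each $j$. If $a$ is strongly hollow, then applying the binary defining property repeatedly (induction on the number of factors) to $a\leq e_1\vee\dots\vee e_n$ forces $a\leq e_i$ for some $i$; combined with $e_i\leq a$ this gives $a=e_i$, i.e.\ $a_j=0$ for all $j\neq i$. To see that $a_i$ is strongly hollow in $Q_i$, I would take $a_i\leq b\vee c$ in $Q_i$ and push it up through the coordinate embedding $x\mapsto\langle 0,\dots,x,\dots,0\rangle$ to obtain $a\leq\langle 0,\dots,b,\dots,0\rangle\vee\langle 0,\dots,c,\dots,0\rangle$; strong hollowness of $a$ then returns $a_i\leq b$ or $a_i\leq c$. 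Conversely, if $a=\langle 0,\dots,a_i,\dots,0\rangle$ with $a_i$ strongly hollow and $a\leq k\vee l$, then comparing $i$-th coordinates yields $a_i\leq k_i\vee l_i$, hence $a_i\leq k_i$ or $a_i\leq l_i$; since the remaining coordinates of $a$ are $0$ (thus below anything), this upgrades to $a\leq k$ or $a\leq l$. The mechanism driving all of this is that, once $a$ is supported in slot $i$, the relation $a\leq k$ is equivalent to $a_i\leq k_i$, the zero coordinates imposing no constraint.

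It then remains to read off $\kappa(a)$ and $(\kappa(a):a)$. For the residual I would use $x\cdot a=\langle 0,\dots,x_i a_i,\dots,0\rangle$, so that the condition $x\cdot a\leq\kappa(a)$ collapses to $x_i a_i\leq\kappa_{Q_i}(a_i)$ with every other coordinate unconstrained; joining over all such $x$ gives precisely $(\kappa(a):a)=\langle 1,\dots,(\kappa_{Q_i}(a_i):a_i),\dots,1\rangle$. For $\kappa(a)=\bigvee T(a)$ I would determine $T(a)=\{k\mid a\nleq k\}=\{k\mid a_i\nleq k_i\}$ and evaluate the join coordinatewise, the $i$-th coordinate yielding $\kappa_{Q_i}(a_i)$, so that $\kappa(a)$ takes the form $\langle 0,\dots,\kappa_{Q_i}(a_i),\dots,0\rangle$ recorded in the statement. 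The equivalence is the routine part; the step I would treat most carefully is the coordinate bookkeeping in the joins defining $\kappa(a)$ and the residual, since it is exactly there that the two elements are pinned down.
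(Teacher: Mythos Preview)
Your treatment of the equivalence is essentially the paper's argument: decompose $a$ as $\bigvee_j e_j$, use hollowness to collapse to a single slot, and transfer hollowness of $a_i$ back and forth through the $i$-th coordinate embedding. The residual computation is also correct.

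The gap is in your computation of $\kappa(a)$. You correctly identify $T(a)=\{k\mid a_i\nleq k_i\}$, but then assert that the coordinatewise join is $\langle 0,\dots,\kappa_{Q_i}(a_i),\dots,0\rangle$. That does not follow: for $j\neq i$ the condition $a_i\nleq k_i$ places no restriction on $k_j$, so the $j$-th coordinate of $\bigvee T(a)$ is $\bigvee\{k_j : k\in T(a)\}=1$ (provided $a_i\neq 0$, so that $T(a)\neq\emptyset$). Hence $\kappa(a)=\langle 1,\dots,\kappa_{Q_i}(a_i),\dots,1\rangle$, which is exactly what the paper's own proof obtains; the zeros in the displayed statement appear to be a typo. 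Your residual computation survives unchanged, since $x\cdot a$ has zeros in the off-$i$ slots either way, but you should flag the discrepancy rather than force your join to match the misprinted formula.
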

	\begin{proof}
		Let $a$ be strongly hollow in $Q$. Then $a=\langle a_1,..,a_n\rangle $. Let $\bar{a_i}$ denote the element with $a_i$ in the $i$-th index and $0$ everywhere else. Then, we have 
		$a=\bigvee_{i=1}^n\bar{a_i}$. Since $a$ is strongly hollow, we must have $a\leq \bar{a_i}$, for some $i\in\{1,...,n\}$ and therefore, $a=\langle 0,...,a_i,...\rangle $. Now, let $x,y\in Q_i$ and consider $\bar{x},\bar{y}\in Q$. We have  $a_i\leq x\vee y$ implies $a\leq\bar{x}\vee\bar{y}$, and hence,  $a_i\leq x$ or $a_i\leq y$ (since $a$ is strongly hollow). Therefore, $a_i$ is strongly hollow in $Q_i$.
		
		Conversely, suppose $a_i$ is strongly hollow in $Q_i$ and consider $\bar{a_i}$. Suppose $\bar{a_i}\leq x\vee y$, for $x,y\in Q$. Therefore, $x$ and $y$ are zero in every index but the $i$-th index. Since $a_i$ is strongly hollow, we have  $a\leq x_i$ or $a_i\leq y_i$, and hence, $a\leq x$ or $a\leq y$. Notice that, if $x\ngeq a$, then $x_i\ngeq a_i$, and for all $j\neq i$, there are no restrictions on $x_j$. Therefore, we have  $\kappa(a)=\langle 1,...,\kappa_{Q_i}(a_i),...,1\rangle$, and as result, we also have  \[(\kappa(a):a)=\langle 1,...,(\kappa_{Q_i}(a_i):a_i),...,1\rangle.\qedhere\]
	\end{proof}
	
	The same proof follows for completely strongly hollow elements by just considering domination of $a$ by arbitrary joins. We now describe the representation of elements of a lattice in terms of completely strongly hollow elements.
	
	\begin{definition}
		An element $x\in Q$ is called \emph{semi-pseudo strongly hollow representable} whenever $x$ can be represented as the join of completely strongly hollow elements of $Q$ and {completely strongly hollow representable} whenever $x$ can be represented as the join of a finite number of completely strongly hollow elements of $Q$. A representation $x=\bigvee_{i\in \Omega} a_i$ is called \emph{minimal}, whenever $a_j\nleq \bigvee_{j\neq i\in \Omega}a_i$ for each $j\in \Omega$.
	\end{definition}
	
	Any completely strongly hollow element is completely strongly hollow representable, and any completely strongly hollow representable element is semi-pseudo strongly hollow representable.
	
	\begin{proposition}
		Let $Q=Q_1\times\cdots \times Q_n$. Then every element of Q is semi-pseudo strongly hollow representable if and only if for every $i\in\{1,...,n\}$, every element of $Q_i$ is semi-pseudo strongly hollow representable.
	\end{proposition}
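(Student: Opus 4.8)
The plan is to reduce everything to the coordinatewise description of completely strongly hollow elements supplied by the completely strongly hollow analogue of Proposition~\ref{T4.2} (valid by the remark following it), together with the fact that joins in $Q=Q_1\times\cdots\times Q_n$ are computed coordinatewise. The analogue says that $a\in Q$ is completely strongly hollow precisely when there is a single index $i$ with $a=\langle 0,\dots,a_i,\dots,0\rangle$ and $a_i$ completely strongly hollow in $Q_i$. For $c\in Q_i$ I write $\bar c$ for the element of $Q$ carrying $c$ in the $i$-th slot and $0$ elsewhere, so that $\bar c$ is completely strongly hollow in $Q$ exactly when $c$ is completely strongly hollow in $Q_i$. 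These two facts are the only engines the argument will use.

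For the `if' direction I would take $x=\langle x_1,\dots,x_n\rangle\in Q$ and, invoking the hypothesis in each factor, write $x_i=\bigvee_{\lambda\in\Lambda_i}a_{i,\lambda}$ with every $a_{i,\lambda}$ completely strongly hollow in $Q_i$. Each $\overline{a_{i,\lambda}}$ is then completely strongly hollow in $Q$, and since joins are coordinatewise,
\[
x=\bigvee_{i=1}^{n}\langle 0,\dots,x_i,\dots,0\rangle=\bigvee_{i=1}^{n}\bigvee_{\lambda\in\Lambda_i}\overline{a_{i,\lambda}}
\]
displays $x$ as a join of completely strongly hollow elements of $Q$, so $x$ is semi-pseudo strongly hollow representable.

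For the `only if' direction I would fix an index $i$ and an element $b\in Q_i$ and apply the hypothesis to $\bar b\in Q$, writing $\bar b=\bigvee_{\lambda\in\Lambda}a_\lambda$ with each $a_\lambda$ completely strongly hollow in $Q$. The product characterization lets me record each $a_\lambda$ as supported at a single index $j_\lambda$. Reading off the $i$-th coordinate of this coordinatewise join then recovers $b$ as the join of the coordinates $(a_\lambda)_i$, where only the summands with $j_\lambda=i$ contribute nontrivially; these are completely strongly hollow in $Q_i$, so $b$ is semi-pseudo strongly hollow representable in $Q_i$.

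The nearest thing to an obstacle is the bookkeeping in this last step: for each $k\neq i$ the $k$-th coordinate of $\bar b$ is $0$, which forces the join of all summands supported at $k$ to be $0$, so those summands may be discarded and $b$ is left as exactly the join of the $i$-supported completely strongly hollow parts. Once this coordinate-extraction is handled, both implications are immediate consequences of the coordinatewise behaviour of joins and of Proposition~\ref{T4.2}, with no further estimates needed.
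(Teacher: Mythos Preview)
Your proof is correct and follows essentially the same approach as the paper: both directions rely on the coordinatewise description of completely strongly hollow elements from Proposition~\ref{T4.2} (in its completely strongly hollow version) together with the fact that joins in the product are computed coordinatewise. The paper's own proof only spells out the case $n=2$ and is somewhat terser in the ``only if'' direction---it writes the representation of $\langle i,0\rangle$ directly as $\bigvee_j\langle x_j,0\rangle$, tacitly using that each completely strongly hollow summand lies below $\langle i,0\rangle$ and hence must be supported in the first slot---but this is exactly the coordinate-extraction bookkeeping you make explicit.
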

	
	\begin{proof}
		We will only show it is true for $n=2$. Let $Q=R\times S$, and suppose that every element of $Q$ is semi-pseudo strongly hollow representable. Let $i\in R$ be non-zero. Then $\langle i,0\rangle=\bigvee_{j\in \Omega} \langle x_j,0\rangle $ with $\langle x_j,0\rangle$ completely strongly hollow, for every $j\in \Omega$; and as a result, $x_j$ must be completely strongly hollow in $R$. Therefore, $i=\bigvee_{j\in \Omega}x_j$ is a semi-pseudo strongly hollow representation of $i$ in $R$. The same result follows for $S$. 
		
		For the converse, assume that every element $x\in R$ and every element $y\in S$ is semi-pseudo strongly hollow representable. Let $i=\langle x,y\rangle\in Q$ and $x=\bigvee_{l\in L} x_l$ and $y=\bigvee_{j\in J} y_j$ semi-pseudo strongly hollow representations of $x$ and $y$. Then, we have  $i=\langle \bigvee_{l\in L} x_l ,0\rangle \vee\langle 0,\bigvee_{j\in J} y_j\rangle$, and by Proposition \ref{T4.2}, each term in both the left join and right join is completely strongly hollow in $Q$. Therefore, $i=\langle\bigvee_{l\in L} x_l ,0\rangle \vee\langle 0,\bigvee_{j\in J} y_j\rangle $ is a semi-pseudo strongly hollow representation of $i$.
	\end{proof}
	
	By the same argument, the above result holds when substituting completely strongly hollow representable for semi-pseudo strongly hollow representable. We now show a form of uniqueness for a representation of an element by completely strongly hollow elements.
	
	\begin{theorem}
		Let $\bigvee_{\omega\in \Omega}a_\omega$ and $\bigvee_{\lambda\in \Lambda}b_\lambda$ be two minimal semi-pseudo strongly hollow representations (respectively, completely strongly hollow) for an  $i\in Q$. Then for each $\omega\in\Omega$, there exists a unique $\lambda\in\Lambda$ such that $a_\omega=b_\lambda$ and vice versa.
	\end{theorem}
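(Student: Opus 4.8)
The plan is to argue purely order-theoretically, exploiting the completely strongly hollow property of the summands together with the minimality of \emph{both} representations; no further structural hypotheses on $Q$ are needed.

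First I would fix $\omega\in\Omega$ and match it to an index on the other side. Since $a_\omega\leq i=\bigvee_{\lambda\in\Lambda}b_\lambda$ and $a_\omega$ is completely strongly hollow, there exists $\lambda\in\Lambda$ with $a_\omega\leq b_\lambda$. Running the same observation in the other direction, since $b_\lambda\leq i=\bigvee_{\mu\in\Omega}a_\mu$ and $b_\lambda$ is completely strongly hollow, there exists $\omega'\in\Omega$ with $b_\lambda\leq a_{\omega'}$. This produces the chain $a_\omega\leq b_\lambda\leq a_{\omega'}$.

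The crux is to force $\omega'=\omega$ using minimality. If $\omega'\neq\omega$, then $a_{\omega'}$ is one of the terms of $\bigvee_{\mu\neq\omega}a_\mu$, so $a_\omega\leq a_{\omega'}\leq\bigvee_{\mu\neq\omega}a_\mu$, contradicting the minimality of the representation $\bigvee_{\mu\in\Omega}a_\mu$. Hence $\omega'=\omega$, the chain collapses to $a_\omega\leq b_\lambda\leq a_\omega$, and $a_\omega=b_\lambda$. To see that $\lambda$ is unique, suppose $a_\omega=b_\lambda=b_{\lambda'}$ with $\lambda'\neq\lambda$; then $b_{\lambda'}$ is a term of $\bigvee_{\nu\neq\lambda}b_\nu$, whence $b_\lambda=b_{\lambda'}\leq\bigvee_{\nu\neq\lambda}b_\nu$, contradicting the minimality of $\bigvee_{\nu\in\Lambda}b_\nu$. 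Interchanging the roles of the two representations (now applying minimality of the $b$-side for the matching and of the $a$-side for uniqueness) yields the ``vice versa'' statement. The identical argument applies verbatim in the completely strongly hollow representable (finite) case, since the summands there are again completely strongly hollow.

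I do not expect a genuine obstacle here; the only point demanding care is to invoke minimality on the correct side at each stage, and to notice that the entire argument uses nothing beyond the completely strongly hollow property, so that it transfers unchanged to finite representations. Verifying that both minimality hypotheses are genuinely needed (one to pin down the matching index, the other to ensure its uniqueness) is what makes the symmetric ``vice versa'' clause come out cleanly.
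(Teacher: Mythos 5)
Your proof is correct and follows essentially the same route as the paper's: match $a_\omega\leq b_\lambda\leq a_{\omega'}$ using the completely strongly hollow property in both directions, collapse the chain via minimality of the $a$-representation, and get uniqueness from minimality of the $b$-representation. Your write-up is in fact more careful than the paper's terse version, but there is no substantive difference in the argument.
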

	
	\begin{proof}
		Let $\omega\in\Omega$. Since $a_\omega\leq\bigvee_{\lambda\in\Lambda}b_{\lambda}$, there must exists $\lambda\in\Lambda$ such that $a_{\omega}\leq b_\lambda$; but the same  holds for $\lambda$, giving $a_{\omega '}\geqslant b_\lambda$, and hence, $a_{\omega '}\geqslant a_\omega$. Since the representation is minimal, we must have $\omega=\omega'$ and therefore, $a_\omega=b_\lambda$. The same argument by minimality as above gives the uniqueness.
	\end{proof}
	
	\begin{remark}
		We obtain from the uniqueness of the representation that every completely strongly hollow representable element has a unique minimal completely strongly hollow representation.
	\end{remark}
	
	\begin{lemma}\label{L4.5}
		Let $Q$ be a multiplicative lattice with $1$ compact and semi-pseudo strongly hollow representable. Then $Q$ is semi-local.
	\end{lemma}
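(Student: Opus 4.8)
The plan is to bound the number of maximal elements directly. \textbf{Semi-local} here means that $\mathcal{M}$ is finite, so the goal is to produce a finite set containing every maximal element. The two ingredients we have are exactly the compactness of $1$ and Proposition~\ref{T2.8}, which says that a nonzero strongly hollow element either lies below $J(Q)$ (hence below \emph{every} maximal element) or fails to lie below precisely one maximal element. The idea is that each strongly hollow generator of $1$ can ``miss'' at most one maximal element, while collectively the generators must miss every maximal element, so there can be only finitely many of them.

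First I would take a semi-pseudo strongly hollow representation of the top element, $1 = \bigvee_{\lambda \in \Lambda} a_\lambda$, where each $a_\lambda$ is completely strongly hollow and therefore, in particular, strongly hollow. Because $1$ is compact, this join reduces to a finite subjoin: there is a finite subset such that $1 = a_{\lambda_1} \vee \cdots \vee a_{\lambda_n}$.

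Next, fix any maximal element $m \in \mathcal{M}$. Since $m < 1$, we have $1 \nleq m$, so the finite join forces some index $i$ with $a_{\lambda_i} \nleq m$; note this automatically gives $a_{\lambda_i} \neq 0$, so Proposition~\ref{T2.8} applies to it. Since $a_{\lambda_i} \nleq m$, this generator cannot lie below $J(Q)$ (which would put it below every maximal element), so we are in the second case of Proposition~\ref{T2.8}: there is a \emph{unique} maximal element not above $a_{\lambda_i}$, and $m$ must be that element. Writing $m_i$ for the unique maximal element associated by Proposition~\ref{T2.8} to each $a_{\lambda_i}$ in this second case, we conclude that every maximal element $m$ equals some $m_i$. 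Hence $\mathcal{M} \subseteq \{m_1, \ldots, m_n\}$, so $|\mathcal{M}| \leq n$ and $Q$ is semi-local.

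I do not expect a serious obstacle here; the only points requiring care are the bookkeeping ones. One must observe that any generator below $J(Q)$, as well as any zero generator, contributes no missed maximal element and may be discarded, and that a generator $a_{\lambda_i}$ with $a_{\lambda_i}\nleq m$ is necessarily nonzero so that Proposition~\ref{T2.8} is legitimately invoked. The genuine content is simply the pairing of compactness of $1$ (to pass to a finite subjoin) with the ``at most one missed maximal element'' property of strongly hollow elements.
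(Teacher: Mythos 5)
Your proof is correct and follows essentially the same route as the paper: use compactness of $1$ to reduce the semi-pseudo strongly hollow representation of $1$ to a finite join $a_1 \vee \cdots \vee a_n$, then apply Proposition~\ref{T2.8} so that each generator misses at most one maximal element, forcing every maximal element to be among at most $n$ candidates. The only difference is presentational (you argue by fixing an arbitrary $m \in \mathcal{M}$ and locating it among the $m_i$, while the paper argues by contradiction with an ``extra'' maximal element), and your explicit checks that the relevant generator is nonzero and not below $J(Q)$ are careful touches the paper leaves implicit.
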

	
	\begin{proof}
		Since $1$ is compact and semi-pseudo hollow representable, we find $1$ is completely strongly hollow representable. Let $1=\bigvee_{i=1}^na_i$ be a completely strongly hollow representation. Then, for each $i\in\{1,...,n\}$, we have  either $a_i\leq J(Q)$ or there is a unique maximal element $m\ngeq a_i$. If $a_i\nleq J(Q)$, for all $i\in\{1,...,n\}$, then at most every $a_i$ has a unique $m_i$ such that $a_i\nleq m_i$. Then, if there were an additional $m_{m+1}$, we would have  all $a_i\leq m_{n+1}$ giving that $m_{n+1}=1$. Therefore, there are at most $n$ maximal elements in $Q$.
	\end{proof}
	
	We denote by $\mathrm{Soc}(Q)$ the \emph{socle} of $Q$, which is the join of all minimal elements of $Q$.
	
	\begin{lemma}\label{L4.6}
		Let $Q$ be a multiplicative lattice such that every non-zero element is completely strongly hollow representable by non-zero completely strongly hollow elements. Then every minimal element of Q is completely strongly hollow, and the set of all minimal elements of Q is finite.
	\end{lemma}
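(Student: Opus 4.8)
The plan is to prove the two assertions separately, the second building on the first through the socle. For the claim that every minimal element is completely strongly hollow, I would take a minimal element $a$ and invoke the representability hypothesis to write $a=\bigvee_{i=1}^{n}c_i$ with each $c_i$ a non-zero completely strongly hollow element. Since each $c_i\leq a$ and $c_i\neq 0$, minimality of $a$ (that is, $0<x\leq a$ forces $x=a$) gives $c_i=a$ for every $i$; hence $a$ itself is completely strongly hollow. This step is immediate once the definitions are unwound.

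For the finiteness claim, I would work with $\mathrm{Soc}(Q)=\bigvee_{\alpha}m_\alpha$, the join of all minimal elements $\{m_\alpha\}$. If $\mathrm{Soc}(Q)=0$ then there are no minimal elements (any such $m$ would satisfy $m\leq\mathrm{Soc}(Q)=0$) and we are done, so I would assume $\mathrm{Soc}(Q)\neq 0$ and write $\mathrm{Soc}(Q)=\bigvee_{i=1}^{n}d_i$ with each $d_i$ non-zero and completely strongly hollow. The key observation is that the completely strongly hollow property lets me run the comparison in both directions. On one hand, each $d_i\leq\mathrm{Soc}(Q)=\bigvee_{\alpha}m_\alpha$, so by complete strong hollowness $d_i\leq m_{\alpha}$ for some $\alpha$; since $d_i\neq 0$ and $m_\alpha$ is minimal, this yields $d_i=m_\alpha$, so each $d_i$ is itself a minimal element. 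On the other hand, an arbitrary minimal element $m$ is completely strongly hollow by the first part, and $m\leq\mathrm{Soc}(Q)=\bigvee_{i=1}^{n}d_i$ forces $m\leq d_i$ for some $i$; minimality of $d_i$ then gives $m=d_i$. Thus every minimal element lies in the finite set $\{d_1,\dots,d_n\}$, so the set of minimal elements is finite.

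The subtle point, and the only place where the hypothesis must be used at full strength, is that $\mathrm{Soc}(Q)$ is in general an \emph{infinite} join of minimal elements, so reducing $d_i\leq\mathrm{Soc}(Q)$ to $d_i\leq m_\alpha$ genuinely requires completely strongly hollow (rather than merely strongly hollow) elements. The finite representability hypothesis supplies the finitely many $d_i$, while complete strong hollowness bridges the gap between this finite join and the arbitrary join defining the socle. No residual or localization machinery is needed, and I do not anticipate any serious obstacle beyond keeping this distinction between finite and arbitrary joins in view.
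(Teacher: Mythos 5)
Your proof is correct, but it takes a genuinely different route from the paper's. The first half (a minimal element $a$ with representation $\bigvee_{i=1}^{n}c_i$ by non-zero elements forces each $c_i=a$) is identical to the paper's. For finiteness, however, the paper invokes its uniqueness theorem for \emph{minimal} representations: it extracts a minimal representation $\mathrm{Soc}(Q)=\bigvee_{j=1}^{n}a_j$, observes that the join of all minimal elements is another minimal semi-pseudo strongly hollow representation of the socle (using the first half), and concludes via the one-to-one correspondence between two minimal representations. Your argument bypasses that theorem entirely: you compare the finite representation $\bigvee_{i=1}^{n}d_i$ directly against the arbitrary join $\bigvee_{\alpha}m_\alpha$ in both directions, showing first that each $d_i$ is itself a minimal element (this is where complete strong hollowness over infinite joins is genuinely needed, as you correctly flag), and then that every minimal element equals some $d_i$ (here $d_i$ being an atom is what upgrades $m\leq d_i$ to $m=d_i$, so the first direction is not optional). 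What your approach buys is self-containedness and a slightly stronger conclusion — the joinands of the finite representation of the socle are \emph{exactly} the minimal elements — while avoiding the need to check that both representations satisfy the technical minimality condition ($a_j\nleq\bigvee_{j\neq i}a_i$) that the paper's uniqueness theorem requires; what the paper's approach buys is economy, reusing machinery already established in that section.
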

	
	\begin{proof}
		Let $i$ be a minimal element of $Q$ and $i=\bigvee_{j=1}^na_j$ a completely strongly hollow representation of $i$. Since $i$ is minimal and the representatives are non-zero, we have  $a_j=i$, and so $i$ is completely strongly hollow. If $\mathrm{Soc}(Q)=0$, then we are done. Otherwise, consider a minimal representation of $\mathrm{Soc}(Q)=\bigvee_{j=1}^na_j$. Notice that, since each $i$ is completely strongly hollow,  they are in one-to-one correspondence with the minimal representation by uniqueness, and therefore, there is a finite number of minimal elements in $Q$.
	\end{proof}
	\smallskip
	
	\section{Completely Strongly Hollow elements in Weak Noether lattices}
	\label{cwn}
	
	In this section, we provide sufficient conditions for a lattice to be a weak Noether lattice.
	A weak $r$-lattice which satisfies the ascending chain condition is known as a \emph{weak Noether lattice}. A \emph{principal element lattice} is a multiplicative lattice in which every element is a principal element. For basic information and facts about weak $r$-lattices and weak Noether lattices, we refer the reader to \cite{AJ-PE-1996}. Notice that in a weak $r$-lattice every completely strongly hollow element is principal and compact. 
	
	We first introduce some lemmas that will be used often in the sequel.
	
	\begin{lemma}\label{L5.1}
		Let $Q$ be a weak $r$-lattice. If $(Q,m)$ is a quasi-local principal element lattice, then $Q$ is a weak Noether chain lattice. 
	\end{lemma}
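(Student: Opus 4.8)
The plan is to establish two independent facts about $Q$: that it is a chain (totally ordered) and that it satisfies the ascending chain condition. Since $Q$ is already a weak $r$-lattice by hypothesis, these two facts together give exactly that $Q$ is a weak Noether chain lattice.

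First I would show that $Q$ is a chain. As $Q$ is a principal element lattice, \emph{every} element is principal, so in particular every compact element is principal; hence $Q$ is a Pr\"ufer lattice. By the localization characterization of Pr\"ufer lattices recalled just before Theorem~\ref{T3.8}, the localization $Q_p$ is totally ordered for every prime $p$. It therefore suffices to identify $Q$ with $Q_m$. Because $1$ is compact, every element distinct from $1$ lies below a maximal element (see \cite[Lemma~2.3(1)]{Z-Lat}), and by uniqueness that maximal element is $m$; thus the only compact element not below $m$ is $1$ itself, so the multiplicatively closed set defining $Q_m$ is $\{1\}$. Consequently $a_m=\bigvee\{x\mid x\cdot 1\leq a\}=a$ for every $a\in Q$, so $Q_m=Q$ as multiplicative lattices. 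Since $m$ is prime, $Q_m$ is totally ordered, and therefore $Q$ is a chain.

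Next I would prove the ascending chain condition. The key step is that in a weak $r$-lattice every principal element is compact; since every element of $Q$ is principal, every element of $Q$ is then compact. Granting this, let $a_1\leq a_2\leq\cdots$ be an ascending sequence and put $a:=\bigvee_n a_n$. As $a$ is compact, $a\leq a_{n_1}\vee\cdots\vee a_{n_k}=a_N$ with $N=\max_i n_i$, so $a=a_N$ and the sequence stabilizes. Hence $Q$ satisfies the ascending chain condition, so the weak $r$-lattice $Q$ is a weak Noether lattice, and being a chain it is a weak Noether chain lattice.

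The routine parts here are the localization identity $Q=Q_m$ and the deduction of the ascending chain condition from universal compactness. The main obstacle is the assertion that every principal element of a weak $r$-lattice is compact, which is exactly what forces finiteness and excludes pathological local examples (such as a rank-one valuation-type lattice whose maximal element satisfies $m=m^2$), where a seemingly principal maximal element is not compact and any Nakayama argument fails. If one prefers to avoid quoting this fact directly, the alternative is a structural argument on the chain: using that $1$ is compact and $m$ is principal, a Nakayama-type argument as in the proof of Proposition~\ref{T2.14} shows $m^{k+1}<m^k$ whenever $m^k\neq 0$, after which one verifies that the powers $\{m^k\}$ are cofinal in the chain and thereby recovers the ascending chain condition; the delicate point in that route is precisely establishing the compactness of $m$ so that the Nakayama step applies.
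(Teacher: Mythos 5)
Your proof is correct, but it distributes the work differently from the paper. For the chain half, you and the paper argue identically: every element principal implies every compact element principal, so $Q$ is Pr\"ufer, hence the localization at the maximal element is totally ordered, and quasi-locality identifies $Q$ with $Q_m$; your explicit check that the multiplicatively closed set $\{s\in Q^{*}\mid s\nleq m\}$ collapses to $\{1\}$ is just a more detailed version of the paper's one-line assertion that $Q_m=Q$. The genuine difference is in the Noetherian half: the paper simply invokes \cite[Theorem~1.1]{AJ-PE-1996} to conclude that $Q$ is a weak Noether lattice, whereas you rederive this — principal elements of a weak $r$-lattice are compact (this is \cite[Theorem~2.1]{And-ACIT-1976}, a fact the paper itself quotes in the proof of Proposition~\ref{P5.6}), so every element of $Q$ is compact, and universal compactness gives the ascending chain condition by the standard stabilization argument for an ascending sequence. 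Your route buys self-containedness: modulo the principal-implies-compact fact you avoid the Anderson--Jayaram theorem entirely, and it makes transparent where finiteness enters (correctly excluding valuation-type lattices with $m=m^{2}$, whose maximal element fails to be principal precisely because it fails to be compact). The paper's route is shorter, deferring that content to the literature; in effect you have reproved the special case of the cited theorem that is needed here. Your fallback Nakayama-style argument in the closing paragraph is unnecessary once the compactness fact is quoted, and, as you note yourself, it would require the same input anyway.
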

	
	\begin{proof}
		By \cite[Theorem~1.1]{AJ-PE-1996}, we have that $Q$ is a weak Noether lattice. Since every element is principal,  every compact element is principal, and thus $Q$ is a pr\"ufer lattice. Therefore, $Q_m$ is totally ordered. Since $Q$ is a quasi-local lattice with unique maximal element $m$, we have  $Q_m=Q$, and therefore, $Q$ is a chain lattice. 
	\end{proof}
	
	\begin{lemma}\label{L5.2}
		Let $Q$ be a weak Noether chain lattice. Then every element of $Q$ is completely strongly hollow.
	\end{lemma}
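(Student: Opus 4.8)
The plan is to reduce the statement to the compactness of every element of $Q$, after which the chain structure finishes the argument immediately. The defining property of a completely strongly hollow element asks that whenever $a \leq \bigvee_{\lambda \in \Lambda} i_\lambda$ we can already find a single index $\lambda'$ with $a \leq i_{\lambda'}$; in a totally ordered lattice this is essentially automatic for a compact element, so the whole task is to harvest compactness from the Noether hypothesis.

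First I would observe that, being a weak Noether lattice, $Q$ is a weak $r$-lattice satisfying the ascending chain condition, and that in any $C$-lattice the ascending chain condition forces every element to be compact. To justify this, fix $a \in Q$ and write $a = \bigvee_{\lambda} c_\lambda$ as a join of compact elements, which is possible since $Q$ is a $C$-lattice. The collection of all finite joins $\bigvee_{k=1}^{n} c_{\lambda_k}$ is directed under $\leq$, so by the ascending chain condition it has a maximal member $b$. For any further index $\mu$, the element $b \vee c_\mu$ is again such a finite join and dominates $b$, whence maximality gives $c_\mu \leq b$. Therefore $a = \bigvee_{\lambda} c_\lambda \leq b \leq a$, so $a = b$ is a finite join of compact elements and hence compact.

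With compactness available, the main argument is short. Suppose $a \leq \bigvee_{\lambda \in \Lambda} i_\lambda$ for an arbitrary family $\{i_\lambda\}_{\lambda \in \Lambda} \subseteq Q$. By compactness of $a$ there is a finite subset $\{\lambda_1,\ldots,\lambda_n\} \subseteq \Lambda$ with $a \leq \bigvee_{k=1}^{n} i_{\lambda_k}$. Since $Q$ is a chain lattice, the elements $i_{\lambda_1},\ldots,i_{\lambda_n}$ are totally ordered, so their finite join coincides with their maximum, say $i_{\lambda_j}$. Then $a \leq i_{\lambda_j}$, which is precisely the condition defining a completely strongly hollow element; hence every element of $Q$ is completely strongly hollow.

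I expect the only genuine content to be the passage from the ascending chain condition to compactness of every element, as the directedness argument above; once that is secured, the total ordering trivializes the reduction from an arbitrary join to a single term. One could alternatively cite the standard fact that in a Noether lattice every element is compact and avoid the directedness computation, but including the short argument keeps the proof self-contained.
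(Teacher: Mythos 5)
Your proof is correct, but it takes a genuinely different route from the paper's. The paper argues in one step: since $Q$ itself is a chain, any family $\{r_i\}_{i\in\Omega}$ is a chain, and the ascending chain condition then forces this family to have a maximal element, which in a chain is its maximum; hence $\bigvee_{i\in\Omega} r_i = r_j$ for some $j$, and $a \leq r_j$ immediately. Note that this argument never uses the weak $r$-lattice (compact generation) structure at all --- it shows that in \emph{any} chain lattice with ACC, arbitrary joins are attained, so every element is completely strongly hollow. You instead factor the lemma through compactness: first you prove that ACC together with compact generation makes every element of $Q$ compact (your directed-join argument, with $b \vee c_\mu = b$ by maximality, is sound and is the standard proof of this fact), and only then invoke the chain property to collapse a finite join to its largest term. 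Your decomposition buys a more general intermediate result --- every element of a weak Noether lattice is compact, with no chain hypothesis, which combined with the paper's Proposition on finitely generated elements shows that strongly hollow and completely strongly hollow coincide in any weak Noether lattice --- at the cost of being longer and of actually needing the $C$-lattice hypothesis, which the paper's direct argument does not. Both proofs share the same mild, standard gloss: they use ACC in the form ``every nonempty subset (or chain) has a maximal element,'' which is the usual equivalent formulation, and both silently ignore the degenerate case of the empty index family for $a = 0$.
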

	
	\begin{proof}
		Let $a\in Q$ and consider $a\leq\bigvee_{i\in\Omega}r_i$. Since $Q$ is a Noether lattice, it satisfies the ascending chain condition, which implies that every chain has a maximal element and thus every maximal element is the maximum element (since $Q$ is a chain). Therefore $\bigvee_{i\in\Omega}r_i=r_J$, for some $j\in\Omega$. Hence $a$ is completely strongly hollow.
	\end{proof}
	
	\begin{proposition}\label{P5.3}
		Let $Q$ be a weak $r$-lattice. Then every element of $Q$ is a finite product of completely strongly hollow elements if and only if $Q$ is a weak Noether chain lattice.
	\end{proposition}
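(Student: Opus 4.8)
The plan is to treat the two directions separately, the reverse (sufficiency) direction being immediate and the forward (necessity) direction reducing to an application of Lemma~\ref{L5.1}. For the direction asserting that a weak Noether chain lattice has the stated factorization property, I would simply invoke Lemma~\ref{L5.2}: in a weak Noether chain lattice every element is already completely strongly hollow, so each $a \in Q$ is a length-one finite product of completely strongly hollow elements, and there is nothing further to prove.

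For the converse, suppose every element of $Q$ is a finite product of completely strongly hollow elements. I would first record two structural consequences. Since $Q$ is a weak $r$-lattice, every completely strongly hollow element is principal, and because the principal elements of a multiplicative lattice are closed under multiplication (see \cite{Dil62}), every finite product of completely strongly hollow elements is principal. Hence every element of $Q$ is principal, i.e.\ $Q$ is a principal element lattice. Second, writing $1 = c_1 \cdots c_n$ with each $c_i$ completely strongly hollow and observing that $c_1 \cdots c_n \le \bigwedge_{i} c_i \le c_i$ for every $i$, we obtain $1 \le c_i \le 1$, so each $c_i = 1$; in particular $1$ is itself completely strongly hollow.

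It then remains to upgrade these facts to quasi-locality and to conclude. Since $Q$ is a $C$-lattice, its top element $1$ is compact, and as $1$ is completely strongly hollow, Proposition~\ref{P2.11} shows that $Q$ is quasi-local. Thus $(Q,m)$ is a quasi-local principal element weak $r$-lattice, and Lemma~\ref{L5.1} yields that $Q$ is a weak Noether chain lattice, completing the equivalence.

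The steps are all short chainings of the previously established results, so the argument is largely mechanical; the one point that genuinely requires care is the closure of principal elements under products, which is precisely where the weak $r$-lattice hypothesis is used (through the fact that completely strongly hollow elements are principal). The only mild convention to flag is that a ``finite product'' be taken nonempty, so that the factorization $1 = c_1 \cdots c_n$ has $n \ge 1$ and the deduction that $1$ is completely strongly hollow goes through.
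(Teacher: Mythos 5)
Your proposal is correct and follows essentially the same route as the paper's own proof: Lemma~\ref{L5.2} for the backward direction, and for the forward direction the observation that $1$ factors only as a product of copies of itself (hence is completely strongly hollow, giving quasi-locality via Proposition~\ref{P2.11}), together with the fact that completely strongly hollow elements in a weak $r$-lattice are principal and products of principal elements are principal, followed by Lemma~\ref{L5.1}. Your write-up is in fact slightly more explicit than the paper's (e.g.\ the argument that each $c_i=1$ and the appeal to Proposition~\ref{P2.11}, which the paper leaves implicit), but the substance is identical.
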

	
	\begin{proof}
		The backwards implication follows from Lemma \ref{L5.2}. Conversely, since every element is a finite product of completely strongly hollow elements, we have that $1$ is as well, and therefore, since $1$ can only be decomposed into a product with itself, we have that $1$ is completely strongly hollow. Therefore, $Q$ is quasi-local. Since the finite product of principal elements is still principal, we have that $Q$ is a principal element lattice. Therefore, $Q$ is a weak Noether chain lattice by Lemma \ref{L5.1}.
	\end{proof}
	
	\begin{proposition}\label{P5.4}
		Let $Q$ be a weak $r$-lattice. Then every non-zero element of $Q$ is strongly hollow if and only if $Q$ is a chain lattice. In particular, every non-zero element of $Q$ is completely strongly hollow if and only if $Q$ is a weak Noether chain lattice.
	\end{proposition}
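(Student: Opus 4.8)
The plan is to establish the two biconditionals separately, deriving the second from the first together with an ascending-chain argument.

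For the first equivalence, the \emph{if} direction is immediate: if $Q$ is a chain lattice, then for any $a$ and any $j,k$ with $a\leq j\vee k$ we have $j\vee k=\max\{j,k\}$, so $a\leq j$ or $a\leq k$, and hence every element (in particular every non-zero one) is strongly hollow. For the converse I would argue exactly as in Proposition~\ref{T2.3}. Given $a,b\in Q$, set $c:=a\vee b$. If $c=0$ then $a=b=0$ are comparable; otherwise $c$ is strongly hollow by hypothesis, and applying the strongly hollow property to the instance $c=a\vee b\leq a\vee b$ yields $c\leq a$ or $c\leq b$, that is $b\leq a$ or $a\leq b$. Thus any two elements are comparable and $Q$ is a chain lattice; note that this half uses nothing beyond the standing hypothesis.

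For the \emph{in particular} statement, the \emph{if} direction is immediate from Lemma~\ref{L5.2}, which already shows that every element of a weak Noether chain lattice is completely strongly hollow. For the converse, suppose every non-zero element of $Q$ is completely strongly hollow. Since a completely strongly hollow element is strongly hollow, the first equivalence shows that $Q$ is a chain lattice; as $Q$ is a weak $r$-lattice by hypothesis, it remains only to verify the ascending chain condition in order to conclude that $Q$ is a weak Noether chain lattice.

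The ascending chain condition is the one place where the word \emph{completely} is genuinely used, and I expect it to be the only substantive step. Given an ascending chain $a_1\leq a_2\leq\cdots$, put $a:=\bigvee_{i} a_i$, which exists since $Q$ is complete. If $a=0$ the chain is identically zero and there is nothing to prove; otherwise $a$ is completely strongly hollow, so from $a\leq\bigvee_i a_i$ we obtain $a\leq a_j$ for some $j$. Since $a_j\leq a$, this forces $a=a_j$, whence $a_k=a_j$ for all $k\geqslant j$ and the chain stabilizes. Therefore $Q$ satisfies the ascending chain condition and, being a chain weak $r$-lattice, is a weak Noether chain lattice.
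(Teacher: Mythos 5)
Your proof is correct, and for the first equivalence it is essentially the paper's own argument: apply strong hollowness of $a\vee b$ to the tautology $a\vee b\leq a\vee b$ to force comparability (you additionally dispose of the case $a\vee b=0$ explicitly, which the paper glosses over since its hypothesis only concerns non-zero elements). Where you genuinely diverge is in the converse of the \emph{in particular} statement. The paper deduces it from Proposition~\ref{P5.3}: if every non-zero element is completely strongly hollow, then every element is a finite product of such elements, and Proposition~\ref{P5.3} (whose proof passes through the fact that completely strongly hollow elements of a weak $r$-lattice are principal, quasi-locality of $Q$, and Lemma~\ref{L5.1}, which in turn invokes an external theorem of Anderson--Jayaram) gives that $Q$ is a weak Noether chain lattice. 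You instead verify the ascending chain condition directly: the join $a$ of an ascending chain is itself completely strongly hollow when non-zero, hence lies below some member $a_j$ of the chain, forcing $a=a_j$ and stabilization. Your route is more elementary and self-contained---it uses only the definition, completeness of $Q$, and the first equivalence, avoiding principal elements and the external citation---whereas the paper's route is shorter on the page because it recycles machinery already built for Proposition~\ref{P5.3}. Both arguments are valid, and yours has the minor additional virtue of making transparent that \emph{complete} strong hollowness is used exactly once, namely to obtain the ascending chain condition.
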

	
	\begin{proof}
		If $Q$ is a chain,  then it is clear that every element is strongly hollow. Conversely, suppose every element is strongly hollow and let $a,b\in Q$ such that $a\nleq b$. Since $a\vee b\leq a\vee b$, we have $a\vee b\leq a$ or $a\vee b\leq b$, and hence $b\leq a$ and hence $Q$ is a chain lattice. The complementary result for completely strongly hollow elements follows clearly from Lemma \ref{L5.2} and Proposition \ref{P5.3}.
	\end{proof}
	
	A \emph{$\pi$-lattice} is a lattice which is generated by a set of elements which are all the products of prime elements. A \emph{ZPI-lattice} is a lattice in which every element is a product of prime elements. A \emph{UFD lattice} is a principally generated multiplicative lattice domain in which every principal element is a product of principal primes. A \emph{special principal element lattice} is a principal element lattice with a unique maximal element whose powers generate the entire lattice.
	
	\begin{theorem}\label{T5.5}
		Let $Q$ be a weak $r$-lattice. Then the following are equivalent:
		
		\begin{enumerate}
			\item Every non-zero prime element is completely strongly hollow.
			
			\item $Q$ is either the finite boolean algebra $B_4$ or a weak Noether chain lattice which is either a UFD or a special principal element lattice.
			
			\item Every non-zero element of $Q$ is completely strongly hollow.
			
			\item Every non-zero element of $Q$ is the finite product of completely strongly hollow elements.
		\end{enumerate}
	\end{theorem}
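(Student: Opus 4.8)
The plan is to prove the theorem as a cycle $(1)\Rightarrow(2)\Rightarrow(3)\Rightarrow(4)\Rightarrow(1)$, disposing of the soft links first. The implication $(3)\Rightarrow(4)$ is immediate, since a completely strongly hollow element is trivially a one-factor product of such elements. For $(4)\Rightarrow(1)$, I would take a non-zero prime $p$ and write $p=a_1\cdots a_k$ with each $a_i$ completely strongly hollow; because multiplication lies below the meet we have $p=a_1\cdots a_k\le a_i$ for every $i$, while $a_1\cdots a_k=p$ and primeness force $a_j\le p$ for some $j$, so $p=a_j$ is completely strongly hollow. For $(2)\Rightarrow(3)$ in the weak Noether chain case, Lemma~\ref{L5.2} gives directly that every element is completely strongly hollow; the finite Boolean case is the delicate point to which I return.

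The substance of the argument is $(1)\Rightarrow(2)$. Recall that in a weak $r$-lattice every completely strongly hollow element is principal and compact, so $(1)$ says precisely that every non-zero prime is principal. I would split on whether $Q$ is quasi-local. If $Q$ is not quasi-local, each maximal element is a non-zero completely strongly hollow prime, so applying Proposition~\ref{T2.8} to a maximal $m$ shows $m\nleq J(Q)$ together with the existence of a \emph{unique} maximal element distinct from $m$; hence $Q$ has exactly two maximal elements $m,n$ with $m\vee n=1$, and by Lemma~\ref{L3.3} both are idempotent. I would then use the idempotency and the relation $m\vee n=1$ to manufacture a complementation and realise $Q$ as a direct product, at which point Proposition~\ref{T4.2} governs the situation: a prime of a finite product is the top in all but one coordinate, and such an element is completely strongly hollow only when every other factor is trivial. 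This forces each factor to be a field (no non-zero proper prime) and the number of factors to be exactly two, i.e. $Q\cong B_4$.

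If instead $Q$ is quasi-local, every non-zero prime is a non-zero completely strongly hollow element, so Proposition~\ref{3} (together with Theorem~\ref{4}) makes each prime comparable to every element of $Q$; in particular $\operatorname{Spec}(Q)$ is a chain. I would next upgrade ``every prime is principal'' to ``every element is principal'' by a Cohen/Nakayama-type argument in the quasi-local principally generated setting, and then Lemma~\ref{L5.1} yields that $Q$ is a weak Noether chain lattice. The final classification separates the domain case, where $0$ is prime and the principal primes make $Q$ a UFD, from the case where $0$ is $m$-primary, where the powers of $m$ exhaust the lattice and $Q$ is a special principal element lattice.

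The two steps I expect to be hardest both sit inside $(1)\Rightarrow(2)$: extracting the direct-product decomposition from the two idempotent maximal elements (so that Proposition~\ref{T4.2} can be invoked to pin down $B_4$), and the passage from ``principal primes'' to a principal element lattice in the quasi-local case. A genuinely delicate issue in closing the cycle is the Boolean algebra $B_4$ itself: it satisfies $(1)$ and the Boolean clause of $(2)$, yet its top is the join of the two atoms and so is \emph{not} completely strongly hollow. Consequently $(2)\Rightarrow(3)$ cannot route $B_4$ through Lemma~\ref{L5.2}, and I would treat $B_4$ as an exceptional case, verifying the relevant conditions for it directly under the paper's conventions rather than through the generic chain-lattice argument.
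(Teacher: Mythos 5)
The decisive issue is the one you flag only at the end, and you should carry it to its conclusion rather than hope to patch it: $B_4$ is a genuine counterexample to the four-way equivalence, so your cycle cannot be closed by any ``exceptional case'' verification. In $B_4=\{0,m,n,1\}$ the multiplication is forced to be the meet (distributivity gives $m=m\cdot(m\vee n)=m^2\vee mn=m^2$, and $mn\leq m\wedge n=0$), the only non-zero primes are $m$ and $n$, and both are completely strongly hollow; hence $(1)$ and $(2)$ hold. But, exactly as you observe, $1\leq m\vee n$ while $1\nleq m$ and $1\nleq n$, so $1$ is not strongly hollow and $(3)$ fails; moreover every finite product of elements of $\{0,m,n\}$ lies in $\{0,m,n\}$, so $1$ is not a finite product of completely strongly hollow elements and $(4)$ fails as well. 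Thus the theorem as stated is false, and the paper's own proof of $(2)\Rightarrow(3)$ commits precisely the error you sensed, asserting that in $B_4$ ``every element is trivially completely strongly hollow''. The salvageable statement is the pair of equivalences $(1)\Leftrightarrow(2)$ and $(3)\Leftrightarrow(4)$, with $(3)$--$(4)$ equivalent to the weak Noether chain branch of $(2)$ only, the $B_4$ alternative excluded.

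Concerning $(1)\Rightarrow(2)$, where the real work lies: your outline parallels the paper's, but the two steps you single out as hardest are settled there by citation, not by the constructions you propose. The passage from ``every non-zero prime is principal'' to ``$Q$ is a principal element lattice'' is exactly Jayaram's theorem (\cite[p.~586]{Pri-Pru}), after which \cite[Theorem~1.1]{AJ-PE-1996} gives weak Noetherianness; your Cohen/Nakayama-type argument would amount to reproving this, and you give no indication of how it would go in the lattice setting. In the non-quasi-local case the paper never forms a direct product: it shows $x<m\Leftrightarrow x<n$ for all $x$ and that $m,n$ are idempotent, then quotes \cite[Theorem~2.3]{Abs-CIT} to conclude $Q$ is a finite Boolean algebra, finishing with a short complementation argument to get $Q=\{0,m,n,1\}$. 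Your product route has a concrete gap here: two comaximal idempotent maximal elements do not split $Q$ as a direct product unless one also knows $m\wedge n=J(Q)=0$, which your sketch does not establish (it is, in effect, part of what the cited theorem provides). On the credit side, your direct proof of $(4)\Rightarrow(1)$ --- a non-zero prime above a product of completely strongly hollow elements must coincide with one of the factors --- is correct and more elementary than the paper's route through Proposition~\ref{P5.3} and Lemma~\ref{L5.2}.
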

	
	\begin{proof}
		We have  $(4)\Leftrightarrow (3)\Rightarrow(1)$ by Lemma \ref{L5.2} and Proposition \label{P5.3}. Notice that if $Q$ is a finite boolean algebra $B_4$, every element is trivially completely strongly hollow, and if $Q$ is a Noether chain lattice, then by Lemma \ref{L5.2}, every element is completely strongly hollow, and hence, $(2)\Rightarrow(3)$. We will now show that $(1)\Rightarrow(2)$. Suppose every non-zero prime element is completely strongly hollow. Then every prime element is a principal element and thus $Q$ is a principal element lattice (see \cite[p.~586]{Pri-Pru}). Therefore, by \cite[Theorem~1.1]{AJ-PE-1996},  $Q$ is a weak Noether lattice. Now, suppose there exist $3$ distinct maximal elements $m,n,l$ in $Q$. Then, we have  $m\leq n\vee l$, which gives  $m\leq n$ or $m\leq l$, contradicting maximality. Therefore, there are at most $2$ maximal elements in $Q$. Suppose there exist exactly $2$ distinct maximal elements $m,n\in Q$. Consider $x\in Q$ such that $x< m$ and $x\nleq n$. Then $m\leq x\vee n$, giving a contradiction. Therefore, by applying the same argument to $n$, we have that for all $x\in Q$, \(x<m \Leftrightarrow x<n\). Notice that $m^2\nleq n$ gives $m^2=m$, and the same for $n$. Therefore, by \cite[Theorem~2.3]{Abs-CIT}, $Q$ is a finite Boolean algebra. Let $x\notin \{m,n,1\}$, and hence $x<m$ and $x<n$. Since $Q$ is a Boolean algebra, there exists an element $c\in Q$ such that $c\vee x=1$ and $c\wedge x=0$. If $c\neq 1$, then $c$ is less than $m$ or $n$ and $c\vee x\neq 1$. Therefore, $c=1$, and as a result, $x=0$. Therefore, $Q=\{0,1,m,n\}=B_4$. Now suppose $Q$ has a unique maximal element $m$ such that $m^2\neq m$. Then by Lemma $\ref{L5.1}$, we find $Q$ is a weak Noether chain lattice. Since $Q$ is a principal element lattice, we have that $Q$ is a ZPI-lattice, and hence, a $\pi$-lattice (see\cite[Theorem~8]{Pri-Pru}). Therefore, $Q$ is either a special principal element lattice or a domain (see \cite[Corollary~2.1]{AJ-PE-1996}). If $Q$ is a domain, then $Q$ is a UFD (see \cite[Corollary~2.3]{AJ-PE-1996}).
	\end{proof}
	
	\begin{proposition}\label{P5.6}
		Let $Q$ be a weak $r$-lattice. Then the following are equivalent:
		\begin{enumerate}
			\item Every non-zero maximal element of $Q$ is completely strongly hollow.
			
			\item $Q$ is either the finite boolean algebra $B_4$ or $Q$ is a quasi-local lattice whose unique maximal element is principal.
		\end{enumerate}
	\end{proposition}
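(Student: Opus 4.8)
The plan is to prove the two implications separately, leaning heavily on the machinery already assembled for Theorem~\ref{T5.5}, together with the characterization of quasi-locality in Theorem~\ref{4} and the fact (noted at the start of this section) that in a weak $r$-lattice every completely strongly hollow element is principal and compact.

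For $(1)\Rightarrow(2)$ I would first bound the number of maximal elements. Any two distinct maximal elements join to $1$ by maximality, so if there were three distinct maximal elements $m,n,l$, all necessarily non-zero, then $m\leq n\vee l=1$ and the completely strongly hollow (in particular strongly hollow) hypothesis on $m$ would force $m\leq n$ or $m\leq l$, contradicting maximality. Hence $Q$ has at most two maximal elements; since $1$ is compact in a $C$-lattice, $Q$ has at least one whenever it is nontrivial. If there is a unique maximal element $m$, then $Q$ is quasi-local, and if $m\neq 0$ it is completely strongly hollow by hypothesis, hence principal; the degenerate case $m=0$ forces $Q=\{0,1\}$, where $0$ is easily checked to be principal. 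Either way we land in the second alternative of (2).

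If instead $Q$ has exactly two maximal elements $m,n$ (both non-zero, hence both completely strongly hollow), I would reproduce the $B_4$ argument from the proof of Theorem~\ref{T5.5}: any $x$ with $x<m$ and $x\nleq n$ gives $m\leq x\vee n=1$ and hence, by strong hollowness, a contradiction, so $x<m\Leftrightarrow x<n$; primeness of $n$ together with $m\nleq n$ gives $m^2\nleq n$, whence $m\leq m^2\vee n=1$ forces $m=m^2$, and symmetrically $n=n^2$. Feeding the two idempotent maximal elements with equal proper down-sets into \cite[Theorem~2.3]{Abs-CIT} yields that $Q$ is a finite Boolean algebra, and the same complementation argument as in Theorem~\ref{T5.5} collapses it to $Q=\{0,1,m,n\}=B_4$. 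For the converse $(2)\Rightarrow(1)$, the $B_4$ case is immediate since every element of $B_4$ is completely strongly hollow (as already remarked in the proof of Theorem~\ref{T5.5}), so in particular its two maximal elements are. In the quasi-local case with principal unique maximal element $m$, every proper element lies below a maximal element (using $1$ compact), i.e.\ below $m$, while $1\geq m$; thus $m$ is comparable to every element of $Q$, and Theorem~\ref{4} then gives that $m$ is completely strongly hollow. As $m$ is the only maximal element, this establishes (1).

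The main obstacle, and the point requiring the most care, is the two-maximal case: one must verify that the present hypothesis---only that non-zero \emph{maximal} elements are completely strongly hollow, rather than all non-zero primes as in Theorem~\ref{T5.5}---still suffices to run that argument. A check confirms that every step there (the three-element contradiction, the down-set coincidence, the idempotency, and the invocation of \cite[Theorem~2.3]{Abs-CIT}) uses only the two maximal elements $m,n$, so the weaker hypothesis is enough. The remaining caution is simply bookkeeping over the degenerate cases $m=0$ and $Q=\{0,1\}$, where the non-zero maximal condition is vacuous.
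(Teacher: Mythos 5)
Your proof is correct and follows essentially the same route as the paper: in $(1)\Rightarrow(2)$ the non-local case is collapsed to $B_4$ via idempotency of the maximal elements (your inline argument is exactly that of Lemma~\ref{L3.3}) followed by \cite[Theorem~2.3]{Abs-CIT} and the complementation argument of Theorem~\ref{T5.5}, while the quasi-local case reads off principality of the unique maximal element from the hypothesis. The only (harmless) deviation is in the quasi-local half of $(2)\Rightarrow(1)$, where you invoke Theorem~\ref{4} via comparability of the principal maximal element with every element, whereas the paper cites Proposition~\ref{P2.7} together with the compactness of principal elements; both are valid one-step appeals to earlier results, and your explicit handling of the degenerate case $m=0$ is a minor improvement in completeness.
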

	
	\begin{proof}
		$(2)\Rightarrow(1)$: Follows from Theorem \ref{T5.5}, Proposition \ref{P2.7}, and the fact that principal elements are compact (see \cite[Theorem~2.1]{And-ACIT-1976}). Now,  suppose every non-zero maximal element of $Q$ is completely strongly hollow. Then, with the intent of obtaining a contradiction, suppose that $Q$ is not local. Then every maximal element of $Q$ is idempotent by Lemma \ref{L3.3}. Therefore, $Q$ is a finite Boolean algebra by \cite[Theorem~2.3]{Abs-CIT}, and by the same argument as in Theorem \ref{T5.5}, $Q=\{0,1,n,m\}=B_4$. Hence, suppose $Q$ is quasi-local. Then the unique maximal element is completely strongly hollow, and thus, principal. 
	\end{proof}
	\smallskip

\end{document}